\author{Asl{\i} Deniz} 
\title{Convergence of Rays with Rational Argument in Hyperbolic Components}
\theoremstyle{plain}
\newtheorem{thm}{Theorem}
\numberwithin{thm}{section}
\newtheorem{prop}[thm]{Proposition}
\newtheorem{lem}[thm]{Lemma}
\newtheorem*{thmmain}{Main Theorem}
\theoremstyle{definition}
\renewcommand\Im{\operatorname{Im}}
\renewcommand\Re{\operatorname{Re}}
\theoremstyle{definition}
\newtheorem{defn}[thm]{Definition}
\newtheorem{rem}[thm]{Remark}
\newtheorem{cor}[thm]{Corollary}
\date{}
\begin{document}
\maketitle
\begin{abstract}
In this paper, we use the Carath\'eodory Convergence Theory to prove a landing theorem of rays in hyperbolic components with rational arguments. Although the proof is done in the setting of a family of entire transcendental maps with two singular values, the method can be generalized to many other one parameter families, with some modifications.
\end{abstract}

\section{Introduction and setup}
We consider the dynamical system obtained by the iterates of an entire function $f:\mathbb{C}\rightarrow\mathbb{C}$. 
We denote $n$th iterate by $f^n=f \circ \overset{n}{\cdots} \circ f$. We are interested in understanding the long term behavior of orbits of points -sequences generated by the iterates of $f$- in terms of their initial conditions.
If the orbit of a point $z_0\in\mathbb{C}$ is finite, we say that $z_0$ is pre-periodic. If there exists $q\in\mathbb{N}$ satisfying $f^q(z_0)=z_0$, then $z_0$ is periodic; in particular, $z_0$ is a fixed point if $q=1$. Periodic points are classified in terms of their multipliers: $(f^q)'(z_0)$, and for most multipliers, local dynamics near periodic points can be reduced to a simple form. For example,
if $f(z_0)=z_0$ and $f'(z_0)=0$, $z_0$ is a superattracting fixed point. Then $f$ is conformally conjugate to $z\mapsto z^d$ for some $d\geq 2$. The conjugating map is called the B\"{o}ttcher coordinate, and is unique up to multiplication by the $(d-1)$st root of unity.
\\
\\
For  $f:\mathbb{C}\rightarrow\mathbb{C}$, the dynamical plane is partitioned into two totally invariant sets with respect to the behavior of the points; the set of points with stable behavior, i.e., the domain of normality of the iterates $\{f^n\}_{n}$, and its complement, i.e., the set of points with nonstable behavior. These two complementary sets are called the Fatou set, which we denote by $\mathcal{F}(f)$, and the Julia set, which we denote by $\mathcal{J}(f)$, respectively. A singular value of  $f$ is a point with the property that, in its neighborhood, at least one of the inverse branches of $f$ is not well-defined. There are two types of singular values: a critical value, and an asymptotic value. A critical value is the image of a branch point of $f$. If there exists a curve $\gamma:[0,\infty)\rightarrow\mathbb{C}$, $\lim_{t\rightarrow\infty}|\gamma(t)|=\infty$, such that $\lim_{t\rightarrow\infty}f(\gamma(t))=a$, then $a$ is called an asymptotic value. The behavior of singular values plays an essential role to understand the dynamics. For example, the number of singular values is associated with the number of stable domains.
\\
\\
The study of iteration began to receive more attention in the $80$'s, with the wide analysis of the quadratic family $Q_c(z)=z^2+c$, $c\in\mathbb{C}$, by Douady and Hubbard (\cite{douhub1982}). One of the main goals in \cite{douhub1982} is to understand the topology of the boundary of the Mandelbrot set $\mathcal{M}$ -the set of parameter values, for which the orbit of the singular value $z=c$ is bounded. In order to approach the boundary of $\mathcal{M}$, unbounded curves which come from infinity and accumulate on the boundary of $\mathcal{M}$ are constructed. These curves are called parameter rays. Construction of parameter rays requires analysis in dynamical plane:
For a quadratic polynomial, since the point at infinity is a superattracting fixed point with local degree $2$, in a neighborhood $\Omega_c$ of  $\infty$, the B\"ottcher coordinate conjugates the dynamics of $Q_c$  in $\Omega_c$ to $z\mapsto z^2$ in some neighborhood of $\infty$. A dynamic ray is defined as an inverse image of a radial line, which is equal to $\phi_c^{-1}(re^{2\pi i\theta}, r>R)$, for some $R\geq 1$. The complement of $\mathcal{M}$ is parametrized by the conformal isomorphism $\Phi:\mathbb{C}\backslash \mathcal{M}\rightarrow \mathbb{C}\backslash\overline{\mathbb{D}}$, $c\mapsto\phi_c(c)$. A parameter ray of argument $\theta$ is then given by $R_{\mathcal{M}}(\theta):=\Phi^{-1}(re^{2\pi i\theta},r>1)$. By construction, dynamic and  parameter rays have a relationship, namely, whenever a parameter is taken on a parameter ray of argument $\theta$, then the critical value $z=c$ is on the dynamic ray with the same argument. We say that $R_{\mathcal{M}}(\theta)$ lands if $\overline{R_{\mathcal{M}}(\theta)}\backslash R_{\mathcal{M}}(\theta)$ is a single point.
One of the main results Douady and Hubbard obtained is that the parameter rays with rational argument land on the boundary of $\mathcal{M}$ \cite[Chpt $8$]{douhub1982}. Since then, many alternative proofs were established to explain this phenomena (\cite{hubsch1994, mil1997, pet1998, petryd2000, sch1997}). In this article we give a new one, following the ideas of \cite{pet1998}, using  the Carath\'eodory Convergence Theory (see Section \ref{caratopology}), which relies on the convergence of marked domains. With some modifications, this proof can be applied to a more general setting.\\
\\
We illustrate the proof by a one parameter family of transcendental entire maps with two singular values, one of which is a critical value that is fixed at the critical point, and the other is a free asymptotic value that has only one finite preimage. Up to conjugacy  with a M\"{o}bius transformation, every such map can be written in the form
\begin{equation*}
f_a(z)=a(e^z(z-1)+1)
\end{equation*}
for some $a\in\mathbb{C}^*$ (see Theorem \ref{constructionoff_a}). In this parametrization, the critical point is fixed at $z=0$, while the asymptotic value is at $z=a$, and its finite preimage is at $z=1$. As the critical point is fixed for every $a\in\mathbb{C}^*$, it is a superattracting fixed point, and it has its own basin of attraction, which we denote by $A_a$.  Its immediate basin, denoted by $A_a^0$, is the connected component of $A_a$ which contains $0$. \\
\\
The asymptotic value can belong to $A_a$. This means, compared to the quadratic or the exponential family, there is a different class of hyperbolic components  consisting of parameter values for which the free singular  value is in $A_a$. (in Figure \ref{parameterplane_exp}, these components are shown in blue). We call \textit{the main hyperbolic component}, the special hyperbolic component in this class, given by
\begin{equation*}
\mathcal{C}^0:=\{a,\;a\in A_a^0\}.
\end{equation*}
 \begin{center}
\begin{figure}[htb!]
\includegraphics[height=6cm,width=6cm]{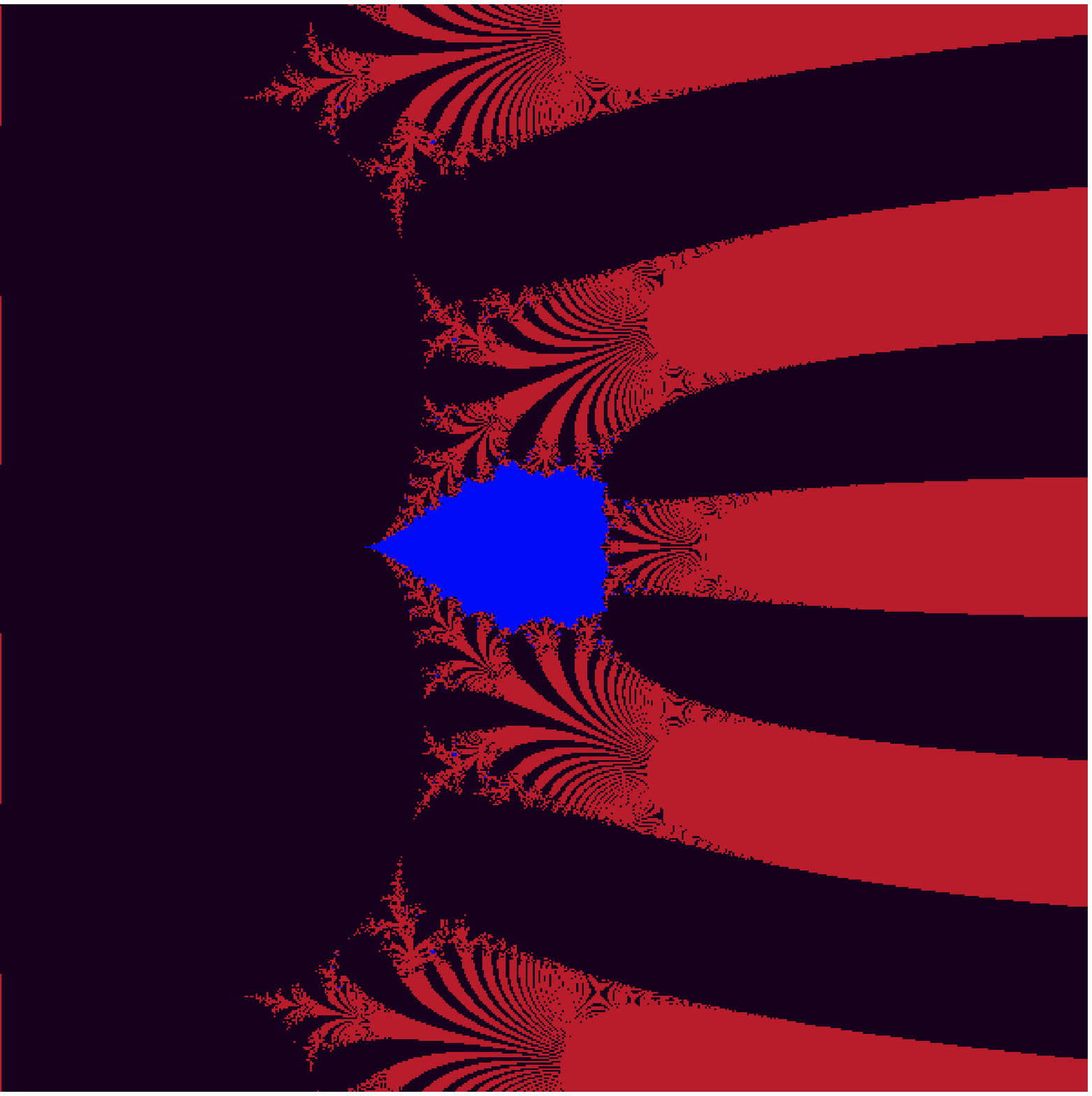}
\includegraphics[height=6cm,width=6cm]{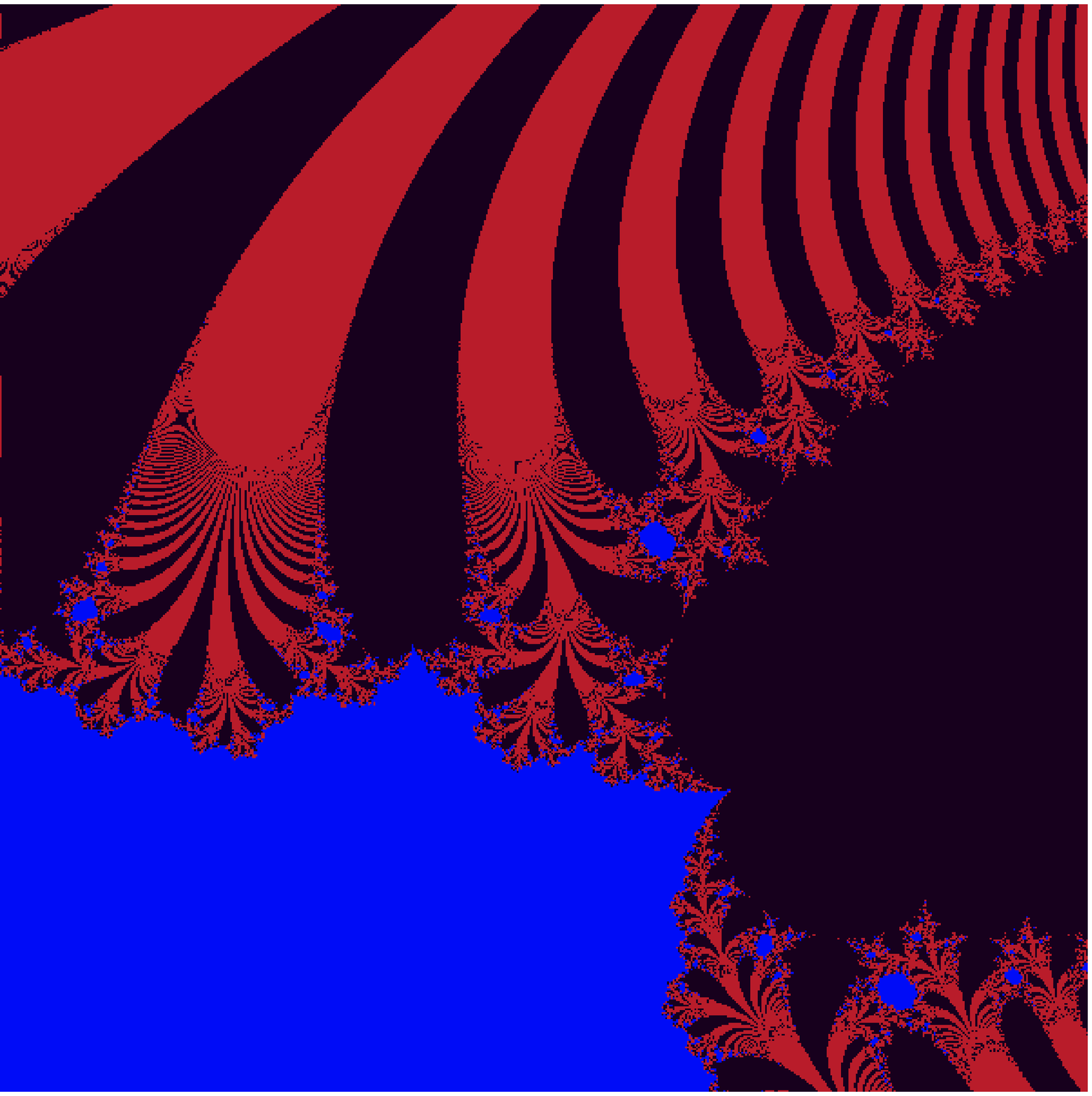}
\caption{\label{parameterplane_exp}\small{Parameter plane, left in $[10,10]\times[10,10]$-the blue component in the center is the main hyperbolic component, right in $[-0.5,2]\times[0.5,3]$. }}\label{parameterplane1}
\end{figure}
\end{center}
\noindent For each $a\in \mathcal{C}^0$, there exists a maximal domain $\Omega_a\in A_a^0$ and $\mathbb{D}_{r(a)}:=\{z;\;|z|<r(a)\}$ for which the restriction of the unique B\"{o}ttcher map $\phi_a|_{\Omega_a}:\Omega_a\rightarrow\mathbb{D}_{r(a)}$ -associated to the superattracting fixed point $0$-  is biholomorphic. Observe that $\Omega_a$ is unbounded while $f_a(\Omega_a)$ is bounded, and the boundary $\partial f_a(\Omega_a)$ contains $a$. Since $0$ is a simple critical point, $\phi_a$ conjugates the dynamics to $z\mapsto z^2$ in some neighborhood of $0$. The Green's function $g_a$, associated to $0$ is the subharmonic function on $\mathbb{C}$, which is equal to $\log|\phi_a(z)|$ on $\Omega_a$, and which extends to the whole plane by
\begin{equation*}
g_a(z) =
\left\{
	\begin{array}{ll}
		2^ng_a(z)=g_a(f_a^n(z))  & \mbox{if } z \in A_a^0,\\
		0 & \mbox{if } z\in\mathbb{C}\backslash A_a^0.
	\end{array}
\right.
\end{equation*}

\noindent By definition, $g_a$ depends continuously on the parameter. In $A_a^0$, we define the internal dynamic ray $R_{A_a^0}(\theta)$ of argument $\theta\in\mathbb{R}\slash\mathbb{Z}$, as the gradient line for $g_a$, which is equal to $\phi_a^{-1}(re^{2\pi i\theta},0<r<r(a))$ in $\Omega_a$. 
Note that $\phi_a$ has a unique continuous extension to the boundary of $\Omega_a\subset A_a^0\cup\{0\}$. For this extension we may define  $\phi_a(\infty):=-\phi_a(1)$, since both $\infty$ and $1$ are "mapped" to $a$ by $f_a$. We may then parametrize the main hyperbolic component by the biholomorphic map
\begin{eqnarray*}
\Phi:{\mathcal{C}^0}\cup\{0\}&\rightarrow&\mathbb{D}\\
a&\mapsto&\phi_a(\infty).
\end{eqnarray*}
The details are given in Section \ref{resultsf_a} (see Theorem \ref{parametrizationofthemainhyperboliccomponent}). According to this parametrization, the internal parameter ray $R_{{\mathcal{C}^0}}(\theta)$ of argument $\theta\in\mathbb{R}\slash\mathbb{Z}$ in $\mathcal{C}^0$ is given by the inverse image $\Phi^{-1}(re^{2\pi i\theta},0<r<1)$.

\begin{figure}[htb!]
\begin{center}
\def\svgwidth{12 cm}
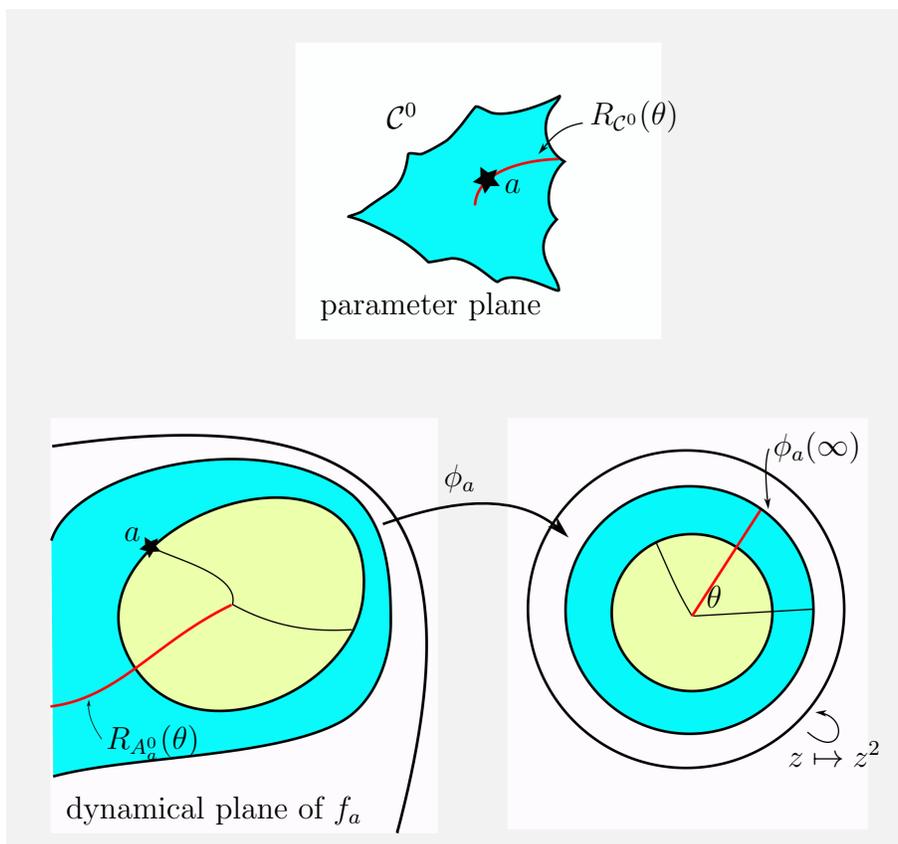
\caption{\small{Illustration of $\mathcal{C}^0$, dynamic and parameter rays.} }\label{parameterplane1}
\end{center}
\end{figure}

\noindent For $a\in R_{\mathcal{C}^0}(\theta)$, the corresponding internal dynamic ray $R_{A_a^0}(\theta)$ diverges to $\infty$. By construction, $a\in R_{{\mathcal{C}^0}}(\theta)\cup R_{{\mathcal{C}^0}}(\theta+\frac{1}{2})$ if and only if $a\in R_{A^0_a}(2\theta)$.  The correspondence between dynamic and parameter rays is illustrated by Figure \ref{parameterplane1}. As usual, we say that a ray is rational, if its argument is rational, i.e., $\theta$ satisfies $2^{l+q}\theta\equiv 2^{l}\theta \pmod1$ for some integers $l\geq 0$, $q\geq 1$. We say that $R_{\mathcal{C}^0}(\theta)$ (respectively $R_{A_a^0}(\theta)$) lands if $\overline{R_{\mathcal{C}^0}(\theta)}\backslash R_{\mathcal{C}^0}(\theta)$ (respectively $\overline{R_{A_a^0}(\theta)}\backslash R_{A_a^0}(\theta)$) is a single point. We are interested in the case when $\theta$ is rational, and our main result is the following.

\begin{thmmain}\label{thm} Every rational internal parameter ray lands at a boundary point of the main hyperbolic component $\mathcal{C}^0$. The landing point is either a parabolic, repelling, or a Misiurewicz parameter. More precisely, for $\theta\in\mathbb{Q}\slash\mathbb{Z}$, such that $2^{l+q}\theta\equiv 2^{l}\theta \pmod1$ for some minimal integers $l\geq 0$, $q\geq 1$, and for the landing point  $a\in\partial {\mathcal{C}^0}$ of  $R_{\mathcal{C}^0}(\theta)$,
\begin{itemize}
\item[1)] if $l=0$, then $f_a$ has a $q$-periodic parabolic basin containing $a$,
\item[2)] if $l=1$, then the asymptotic value $a$ belongs to a repelling periodic cycle of period $q$ on $\partial A_a^0$ with $f_a^{q-1}(a)=1$, and
\item[3)] if $l>1$, then the asymptotic value $a$ is (strictly)  preperiodic with $f_a^{l-1}(a)=f_a^{l-1+q}(a)\in\partial A_a^0$.
\end{itemize}
Moreover, for each case, $q$ is the exact period.
\end{thmmain}

\noindent The situation $2)$ in the statement of our theorem is a special case which does not occur in the quadratic family for a landing parameter of a rational ray on the boundary of the Mandelbrot set. Because in such a case the cycle containing the critical value would be superattracting, and it is a contradiction. The situation $2)$ does not occur in the exponential family for any parameter value either. Because the asymptotic value for an exponential map can not be in a periodic cycle. This extra case is due to the existence of the finite preimage of the asymptotic value of $f_a$, which allows the asymptotic value to be in a periodic cycle. If a family has a free singular value which has finite preimage(s), the singular value can be in a cycle for a parameter on the boundary of hyperbolic components, hence a situation as given by $2)$ can occur. 
\\
\\
In fact, what we illustrate by the specific family $f_a$ is the phenomena that by varying the parameter, the free singular value(s) leaves a persistent superattracting immediate basin along rational rays. In this respect, although we present the proof by a specific family, taking the difference in parametrization of the main hyperbolic component and the type of the free singular value into account, our proof applies to many different families. For example consider the family of polynomials
\begin{equation}
f_{a,n}(z)=a\Big(\big(1+\frac{z}{n}\big)^n(z-1)+1\Big),\;\;\;\;n\geq 2,
\end{equation}
for fixed $n$. Here, $f_{a,n}$ can be considered as a polynomial analog of $f_a$, since for any $f_{a,n}$, there are two singular values, which are both critical, and one of which is fixed, and the other is free. 
Similar to $f_a$, we define the main hyperbolic component for $f_{a,n}$, as the set of parameters for which both critical values are in the same Fatou component. The statement of the Main Theorem is valid for $f_{a,n}$, by replacing "asymptotic value" by "critical value". Note also that there is a strong link between $f_{a,n}$ and $f_a$: the sequence $\{f_{a,n}\}_n$ converges uniformly to $f_a$ on compact subsets of $\mathbb{C}$. Reader can also consider $f_{a,n}$ for fixed $n$ in the proof, instead of $f_a$, with attention to the difference coming from the nature of two different type of singular values.
\\
\\
Now we turn back to our family $f_a$ in consideration and give the idea of the proof as follows: For $\theta\in\mathbb{Q}\slash\mathbb{Z}$, we take a sequence of parameters $\{a_n\}_n\in R_{\mathcal{C}^0}(\theta)$, which converges to a limit point $a\in\partial \mathcal{C}^0$. For each element $a_n$ of the sequence, we observe the dynamical plane, and study the convergence of Fatou domains containing the asymptotic value, taking this as a marked point. We see that in the limit, the asymptotic value $a$ satisfies a particular equation, whose solutions form a discrete set. However, as being an accumulation set of a connected set, $\overline{R_{\mathcal{C}^0}(\theta)}\backslash R_{\mathcal{C}^0}(\theta)$ is connected, hence it consists only of one point.
\\
\\
Let us give the structure of the article: In the second section, we introduce the family of transcendental entire maps in consideration. In the third section, we give the main definitions and results regarding 
Carath\'eodory Topology. The last section is devoted to the proof of the Main Theorem.

\subsubsection*{Acknowledgements} I would like to thank Carsten Petersen, for introducing me the problem and the tool,  and N\'uria Fagella for many useful comments. I would also like to thank IMPAN-Insititute of Mathematics, Polish Academy of Science, for hospitality and support during the preparation of this paper. This work was supported by Marie Curie RTN 035651-CODY and Roskilde University.

\section{Overview of the family $f_a$ with related results}\label{resultsf_a}
Having two singular values,  $f_a$ is in the class of entire transcendental maps with a finite number of singular values.  As proved in \cite{erlyu1992} adapted to our family,  all Fatou components are  preperiodic, simply connected, and there are at most two attracting basins or indifferent cycles. Those are the general properties for the members of the family in consideration, which we will use in the proof. 

\subsection{Characterization of $f_a$}
The following theorem shows that $f_a$ includes all entire transcendental maps which share some given dynamical properties.
\begin{thm}\label{constructionoff_a}
Any entire transcendental map of finite order whose singular values are 
\begin{itemize}
\item[i.] one asymptotic value with only one finite preimage, and
\item[ii.] one simple critical value which is fixed at the critical point (normalized at $0$)
\end{itemize}
is affine conjugate to one of the form
\begin{equation*}
f_a(z)=a(e^z(z-1)+1),\;\;a\in\mathbb{C}^*.
\end{equation*}
Moreover, $f_a$ contains a unique representative of each conformal conjugacy class.
\end{thm}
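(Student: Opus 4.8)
The plan is to read off the analytic form of $f$ directly from its singular set and then normalize by an affine change of coordinates. First I would exploit condition (ii): since the only critical value is a simple critical value attained at the fixed critical point, normalized to $0$, the derivative $f'$ vanishes only at $0$ and the vanishing is simple. Because $f$ has finite order, so does $f'$, and the entire function $f'(z)/(cz)$ is nowhere vanishing of finite order; Hadamard's factorization theorem then forces it to be $e^{P(z)}$ for a polynomial $P$. Thus $f'(z)=cz\,e^{P(z)}$ with $c\in\mathbb{C}^{*}$ and $\deg P=d$ equal to the order of $f$. (If $\deg P=0$ then $f$ is a quadratic polynomial, contradicting transcendence, so $d\geq 1$.)

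The decisive step uses condition (i). Let $v$ denote the asymptotic value and $z_{1}$ its unique finite preimage; since the two singular values are distinct we have $z_{1}\neq 0$, so $f'(z_{1})\neq 0$ and $z_{1}$ is a simple zero of $g:=f-v$. As $g$ is entire of order $d$ with the single simple zero $z_{1}$, Hadamard gives $g(z)=(z-z_{1})e^{Q(z)}$ with $\deg Q=d$. Differentiating and equating with $g'=f'$ yields
\[
cz\,e^{P(z)}=e^{Q(z)}\bigl(1+(z-z_{1})Q'(z)\bigr),
\]
so $e^{P-Q}=\dfrac{1+(z-z_{1})Q'(z)}{cz}$. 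The right-hand side is a rational function while the left-hand side is entire and nowhere zero; a nonconstant $e^{P-Q}$ would be transcendental and could not be rational, so both sides must be a nonzero constant. Hence $1+(z-z_{1})Q'(z)=\lambda c\,z$ for some $\lambda\neq 0$. Comparing degrees, the left side has degree $\deg Q=d$ and the right side degree $1$, forcing $d=1$. I expect this degree count, packaged through the two Hadamard factorizations, to be the main obstacle; the subtle points are the use of finite order to keep $P$ and $Q$ polynomial, and the verification that $z_{1}$ is a simple zero distinct from the critical point.

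With $d=1$ we have $P(z)=\alpha z+\beta$, $\alpha\neq 0$, so $f'(z)=cz\,e^{\alpha z+\beta}$. Conjugating by the linear map $z\mapsto z/\alpha$, which fixes the critical point $0$, normalizes the exponent to $e^{z}$ and gives $f'(z)=c'z\,e^{z}$; integrating via $\int z e^{z}\,dz=(z-1)e^{z}$ and imposing $f(0)=0$ (the critical point is fixed) determines the constant of integration and produces $f(z)=a\bigl(e^{z}(z-1)+1\bigr)$ with $a=c'$. This is the asserted normal form.

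Finally, for the uniqueness clause I would argue that any conformal conjugacy between two transcendental entire maps fixes the essential singularity at $\infty$, hence is affine; since it must carry the unique critical point $0$ of $f_{a}$ to that of $f_{a'}$, it is a linear map $z\mapsto uz$. Transporting the invariants of the normalization, namely that the asymptotic value's unique finite preimage sits at $z=1$ for every $f_{a}$, forces $1/u=1$, i.e. $u=1$, and then $a'=a$. Thus distinct parameters give nonconjugate maps, so $\{f_{a}\}_{a\in\mathbb{C}^{*}}$ meets each conformal conjugacy class exactly once.
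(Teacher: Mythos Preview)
Your proof is correct and rests on the same core mechanism as the paper's: Hadamard factorization together with the constraint that $f'$ has a single simple zero, which forces the exponential factor to be of degree one. The execution differs slightly. The paper applies Hadamard once, to $f-v$, writing $f(z)=(z-u)^{m}e^{h(z)}+v$ with $h$ polynomial; differentiating gives $f'(z)=e^{h(z)}(z-u)^{m-1}\bigl(m+(z-u)h'(z)\bigr)$, and requiring this to vanish simply and only at $0$ immediately yields $m=1$ and $h'$ constant. You instead factor both $f'$ and $f-v$ separately and then compare, reaching the same conclusion via the ``rational equals exponential, hence constant'' step and a degree count. Your route is a touch longer but makes the role of the finite-order hypothesis more explicit (it enters twice, keeping both $P$ and $Q$ polynomial). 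The normalization by an affine change of variable and the uniqueness argument are essentially identical to the paper's, which likewise notes that any conjugacy must fix $0$, $1$, and $\infty$ and is therefore the identity.
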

\noindent The proof uses the same idea as in \cite[Thm 3.1]{berfag2010}. A sketch is as follows.
\begin{proof}
Let $g(z)$ be a map as in the statement of the theorem. Suppose $a\in\mathbb{C}$ is the asymptotic value with finite preimage $u$. Since $g(z)-a$ has exactly one solution at $z=u$, then by the Hadamard Factorization Theorem, $g$ is of the form:
\begin{equation*}
g(z)=(z-u)^me^{h(z)}+a,
\end{equation*}
where $h(z)$ is a polynomial and $m\in\mathbb{N}$. The map $g$ has one simple critical point at $0$, so the equality
\begin{equation*}
g'(z)=e^{h(z)}(z-u)^{m-1}\big{(}m+h'(z)(z-u)\big{)}=0
\end{equation*}
gives $m=1$, and $h'(z)=c$, and hence $h(z)=cz+d$. Therefore, we obtain
\begin{equation*}
g(z)=(z-u)e^{cz+d}+a.
\end{equation*}
As $z=0$ is a superattracting fixed point, $e^d=\frac{a}{u}$, and $c=\frac{1}{u}$. This yields:
\begin{equation*}
g(z)=(z-u)e^{\frac{z}{u}}\frac{a}{u}+a=a((\frac{z}{u}-1)e^{\frac{z}{u}}+1).
\end{equation*}
Redefining the variable: $w=\frac{z}{u}$, we obtain $f_a(w)=a(e^w(w-1)+1)$, requiring $a\neq 0$. Observe that this construction forces the finite preimage of the asymptotic value to be $1$. Finally, since any conformal conjugacy between two members of $f_a$ must fix $0$, $1$, and $\infty$, it follows that it is the identity map.
\end{proof}

\subsection{Main hyperbolic component}
In this section, we are going to present some results regarding the main hyperbolic component $\mathcal{C}^0$, and related dynamical properties. For example, $\mathcal{C}^0$ consists of parameters for which the Julia set is  \textit{a Cantor bouquet} (see Proposition \ref{totallyinvariantcomponent}). We recall that a Cantor bouquet is a special form of Julia set, which can be described as a collection of disjoint continuous curves in the dynamical plane tending to $\infty$ in a certain direction, with distinguished endpoints. The following two theorems are going to be used in the proof of Proposition \ref{totallyinvariantcomponent}.
\begin{thm}(Baranski-Jarque-Rempe, \cite[Thm 1.5]{barxarem2011})\label{thmbarxarem}
Let $f:\mathbb{C}\rightarrow\mathbb{C}$ be a transcendental entire function of finite order, which has one completely invariant Fatou component. Then the Julia set $\mathcal{J}(f)$ is a Cantor bouquet.
\end{thm}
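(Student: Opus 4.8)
The plan is to reduce the statement to the now-standard theory of dynamic rays (``hairs'') for entire functions of finite order, and then to verify the topological axioms that characterise a Cantor bouquet. The proof naturally splits into a dynamical reduction, an application of a ray-structure theorem, and a purely topological recognition step.

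First I would exploit the completely invariant component $U$ to place $f$ into \emph{disjoint type}. By complete invariance $f^{-1}(U)=U$, so $\mathcal{J}(f)=\partial U=\mathbb{C}\setminus U$, and $U$ must absorb every singular value; hence the postsingular set is compactly contained in the attracting basin $U$ and the Fatou set reduces to this single basin. This is precisely the configuration in which the escaping and Julia dynamics are cleanest, since all of $\mathcal{J}(f)$ lies ``outside'' the orbit of the singular values. Establishing this reduction rigorously -- in particular that the singular values cannot sit on $\partial U$ -- is itself a genuine step relying on the structure theory of completely invariant components.

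Second, I would invoke the finite-order hypothesis together with the theorem of Rottenfu{\ss}er--R\"uckert--Schleicher--Schleicher on dynamic rays of bounded-type entire functions: for an entire function of finite order in class $\mathcal{B}$, every escaping point lies on a dynamic ray, and each ray is an injective curve tending to $\infty$. In the disjoint-type setting (following Bara\'nski) these rays, together with their landing endpoints, exhaust $\mathcal{J}(f)$ and organise it as a disjoint union of hairs. The finite-order assumption is exactly what rules out the pathological, non-curve hairs that can occur for general bounded-type maps, so it cannot be dispensed with. I would then check that this collection of hairs forms a \emph{straight brush} in the sense of Aarts--Oversteegen, which by their characterisation is ambiently homeomorphic to the model Cantor bouquet: one parametrises the hairs by a compact, totally disconnected set (the Cantor factor), verifies that distinct hairs are disjoint curves to $\infty$, that the set of endpoints is dense, and that the structure is \emph{hairy}, i.e. every hair is a two-sided limit of endpoints of other hairs.

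The main obstacle I expect is precisely this last verification carried out \emph{globally}. Controlling a single hair inside one fundamental domain is relatively routine once the uniform expansion estimates supplied by finite order are in hand, but assembling these local pieces into curves defined over the whole plane, proving their continuity as a family, and establishing the density and hairiness axioms uniformly is the technical heart. Concretely, one must transport the hyperbolic estimates of the disjoint-type map along entire ray orbits and show both that no two hairs collide and that no hair is isolated; this is where the finite-order growth restriction does the essential work, and where any weakening of the hypotheses would break the argument.
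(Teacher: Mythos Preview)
The paper does not prove this theorem: it is quoted from Bara\'nski--Jarque--Rempe \cite[Thm~1.5]{barxarem2011} and invoked as a black box in the proof of Proposition~\ref{totallyinvariantcomponent}. There is no argument in the present paper to compare your proposal against.

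For what it is worth, your outline is broadly the strategy of the original source: reduce to a disjoint-type configuration, use the finite-order ray theorem to write $\mathcal{J}(f)$ as a union of hairs, and then verify the Aarts--Oversteegen straight-brush axioms. Two small inaccuracies worth flagging. First, the ray theorem you mean is Rottenfu{\ss}er--R\"uckert--Rempe--Schleicher, not ``Schleicher--Schleicher''. Second, and more substantively, the step from ``all singular values lie in $U$'' to ``the postsingular set is compactly contained in an attracting basin'' (i.e.\ genuine disjoint type) is not automatic from the stated hypotheses; complete invariance together with Eremenko--Lyubich gives $S(f)\subset U$, but neither that $U$ is an attracting basin nor that the closure of the postsingular set avoids $\partial U$. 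The original paper handles this by an additional conjugation/reduction, which you have glossed over. None of this, however, is relevant to the paper under review, which simply imports the result.
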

\begin{thm}\label{Eremenko, Lyubich}(Eremenko-Lyubich, \cite[Lem 11]{erlyu1992}) Assume that a transcendental entire function has a totally invariant domain $D$. Then all singular values are contained in $D$.
\end{thm}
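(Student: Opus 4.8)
The plan is to argue by contradiction, combining the complete invariance of $D$ with Picard's theorem, the covering characterization of singular values, and Montel's theorem. If $D=\mathbb{C}$ there is nothing to prove, so assume $E:=\mathbb{C}\setminus D\neq\emptyset$. From $f^{-1}(D)=D$ one immediately gets that $E$ is closed and completely invariant, $f^{-1}(E)=E$ and $f(E)\subseteq E$; and, $D$ being a completely invariant Fatou component in the situations of interest, $J(f)\subseteq E$. I would first record the structural fact that underlies everything: $f|_D\colon D\to D$ is onto up to at most one exceptional value, and by total invariance together with the great Picard theorem (a transcendental entire map assumes every value, with at most one exception, infinitely often) every point of $D$ has infinitely many preimages, all of them lying in $D$. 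Thus $f\colon D\to D$ is a self-map of infinite ``degree'', and the branching that makes such a self-map of a connected hyperbolic domain possible can only be supplied by singular values lying \emph{inside} $D$. Turning this heuristic into a statement about \emph{all} singular values is the content of the lemma.

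With this in mind I would set up the inverse-branch machinery. Over a simply connected domain $V$ avoiding the (here finite, hence thin) postsingular set $P(f)=\overline{\bigcup_{k\ge 0}f^{k}(S(f))}$, every branch of every iterate $f^{-n}$ continues holomorphically and single-valuedly by the monodromy theorem. The decisive observation is that if the base point of $V$ is chosen in $E$, then, because $f^{-n}(E)=E$, each such branch takes its values in $E$ and therefore \emph{omits the nonempty open set} $D$. Since $D$ contains a disc, hence at least three points, Montel's theorem forces $\{f^{-n}\}_n$ to be normal along these branches, so wherever they are defined they behave like inverses of a covering and cannot display the ramification or logarithmic singularity characteristic of a singular value.

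Finally, suppose for contradiction that a singular value $s$ lies in $E$. If $s$ is a critical value, complete invariance places the corresponding critical point in $E$ as well, and the normal-family analysis above, run in a disc about $s$ that meets $E$ and avoids $P(f)$, contradicts the ramified local behaviour of $f$ over $s$. The delicate case --- and the one that actually occurs for $f_a$, whose free singular value is the asymptotic value $a$ --- is that of an asymptotic value: here no single-valued inverse branch exists near $s$, and one must instead analyse the logarithmic tract $T$ over a punctured disc about $s$, on which $f$ acts as a universal covering, and decide how $T$ is distributed between $D$ and $E$. \textbf{The main obstacle} is precisely this tract-and-boundary analysis, because the Julia set sits on $\partial D$ rather than in the interior of $E$, so a region contained in $E$ cannot simply be made to meet $J(f)$; it is exactly here that the finite-order hypothesis and the classification of the singularities of $f^{-1}$ must be invoked to rule out $s\in\partial D$, to force the asymptotic value into $D$, and thereby to close the contradiction and conclude $S(f)\subseteq D$.
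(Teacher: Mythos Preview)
The paper does not prove this statement; it is quoted from \cite[Lem~11]{erlyu1992} and used as a black box in the proof of Proposition~\ref{totallyinvariantcomponent}. There is therefore no ``paper's own proof'' to compare your attempt against.

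On its own terms, your proposal is not a proof but a strategy that you yourself declare unfinished: the asymptotic-value case --- the only one that matters for $f_a$ --- is left open, with the tract-and-boundary analysis flagged as ``the main obstacle'' and not carried out. The critical-value case is also unsound as written. You ask for ``a disc about $s$ that \ldots\ avoids $P(f)$'', but $s\in S(f)\subset P(f)$, so no such disc exists; and even waiving that, normality of the family $\{f^{-n}\}_n$ of \emph{iterated} inverse branches says nothing about whether a \emph{single} $f^{-1}$ is ramified over $s$ --- a critical value is perfectly compatible with that family being normal. You are conflating ``$s$ is a regular value of $f$'' with ``inverse iterates are normal near $s$'', and these are unrelated. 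Finally, you import a finite-order hypothesis and finiteness of $S(f)$, neither of which appears in the stated theorem.

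The Montel/inverse-iterate machinery you set up is the tool for showing that points lie in the \emph{Fatou set}, not for locating singular values, so the whole approach is misaimed. The Eremenko--Lyubich argument is shorter and structural: once one knows a completely invariant Fatou component $D$ satisfies $\partial D=J(f)$, one looks at a small disc $B$ about a putative singular value $v\notin D$ and at a component $U$ of $f^{-1}(B)$ (a tract if $v$ is asymptotic). Complete invariance forces $U\cap D$ to be clopen in $U$, so $U\subset D$ or $U\subset E$; either alternative contradicts $v\in J(f)$ together with perfectness of $J(f)$. If you want to reconstruct the proof, that is the line to pursue.
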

\begin{prop}\label{totallyinvariantcomponent} The following two statements are equivalent:

\begin{itemize}
\item[i.] $a\in \mathcal{C}^0$.
\item[ii.] $A_a$ consists of an unbounded, totally invariant component, and the Julia set is a Cantor bouquet.
\end{itemize}

\end{prop}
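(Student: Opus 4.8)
The plan is to prove the two implications separately, invoking Theorem~\ref{Eremenko, Lyubich} for $(ii)\Rightarrow(i)$ and Theorem~\ref{thmbarxarem} for the converse; the real content lies in upgrading forward invariance of the immediate basin to complete invariance.

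For $(ii)\Rightarrow(i)$, I would argue directly. Assuming $A_a$ is a single unbounded totally invariant component, Theorem~\ref{Eremenko, Lyubich} immediately gives that both singular values of $f_a$, namely the critical value $0$ and the asymptotic value $a$, lie in $A_a$. Since $A_a$ is connected and contains the superattracting fixed point $0$, it coincides with its own immediate component, i.e. $A_a=A_a^0$; hence $a\in A_a^0$, which is exactly the statement $a\in\mathcal{C}^0$.

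For $(i)\Rightarrow(ii)$, I would start from $a\in A_a^0$, so that $A_a^0$ contains both singular values $0$ and $a$. First I would record that there can be no further periodic Fatou components: since $f_a$ has only two singular values and both are captured by the basin of $0$, the classification recalled at the start of Section~\ref{resultsf_a} (at most two attracting basins or indifferent cycles, every Fatou component preperiodic and simply connected) forces every Fatou component to be eventually iterated into $A_a^0$, so that $\mathcal{F}(f_a)=A_a$. The decisive step is then to show that $f_a^{-1}(A_a^0)$ is connected, which is equivalent to complete invariance of $A_a^0$. Here I would use that $A_a^0$ is simply connected and contains all singular values: over $A_a^0\setminus\{0,a\}$ the map $f_a$ restricts to an unbranched covering, the preimage of a punctured neighbourhood of the asymptotic value $a$ being an unbounded logarithmic tract, while the critical point $0$ and the finite preimage $z=1$ of $a$ account for the remaining branching, and one checks that the critical, regular, and tract preimages all belong to a single component. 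Granting this, $A_a^0$ is totally invariant, so $A_a=A_a^0$ is a single component; it is unbounded because it contains the logarithmic tract over $a$ (equivalently, it contains the unbounded B\"ottcher domain $\Omega_a$). Finally, $f_a$ has order $1$, hence finite order, and possesses the completely invariant Fatou component $A_a^0$, so Theorem~\ref{thmbarxarem} yields that $\mathcal{J}(f_a)$ is a Cantor bouquet.

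I expect the complete-invariance step to be the main obstacle. Forward invariance of an immediate basin is automatic, but ruling out additional components of $f_a^{-1}(A_a^0)$ genuinely requires the hypothesis that every singular value is trapped in $A_a^0$, and making the connectivity argument precise -- tracking how the infinitely many preimages coming from the logarithmic singularity over $a$ organize together with the critical preimages of $0$ into one component -- is where the care is needed. Everything else (the reduction $\mathcal{F}(f_a)=A_a$, the unboundedness, and the final appeal to the Cantor bouquet theorem) is comparatively routine once complete invariance is in hand.
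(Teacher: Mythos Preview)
Your proposal is correct and follows the same two-implication structure as the paper, with the same appeals to Theorem~\ref{Eremenko, Lyubich} and Theorem~\ref{thmbarxarem}. The paper carries out the complete-invariance step more concretely than your sketch: it chooses a simple arc $\gamma\subset A_a^0$ joining $0$ to $a$ and examines the components of $f_a^{-1}(\gamma)$. Two of these lifts, through the critical point $0$ and through the finite preimage $1$ of $a$, share the point $0$ with $\gamma$ and hence lie in $A_a^0$; the remaining lifts $\widehat\gamma_i$ are unbounded curves joining $\infty$ to the other preimages of $0$. The decisive observation is that the preimage of a small neighbourhood of $a$ contains a left half-plane $H_-$, which every $\widehat\gamma_i$ enters; since one of the lifts already joins $H_-$ to $0\in A_a^0$, all of them lie in $A_a^0$, and backward invariance follows. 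This is precisely the ``tracking how the infinitely many preimages coming from the logarithmic singularity organize into one component'' that you flagged as the main obstacle; the paper does it with curve lifts rather than abstract covering theory, which makes the argument completely explicit. Your preliminary step of deducing $\mathcal{F}(f_a)=A_a$ from the classification of Fatou components is unnecessary: once $A_a^0$ is backward invariant, $A_a=\bigcup_{n\geq 0} f_a^{-n}(A_a^0)\subset A_a^0$ is immediate.
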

\begin{proof}
($i\Rightarrow ii$) Suppose $a\in \mathcal{C}^0$. First we will show $A_a=A_a^0$. We take a simple curve $\gamma\subset A_a^0$ joining $a$ and $0$. Let the curves $\widehat{\gamma}_0$ and $\widehat{\gamma}$ be the components of $f_a^{-1}(\gamma)$ such that $\widehat{\gamma}_0$ connects $\infty$ to $0$ and $\widehat{\gamma}$ connects $1$ to $0$. Since $\widehat{\gamma}\cap\gamma\neq\emptyset$ and $\widehat{\gamma}_{0}\cap\gamma\neq\emptyset$, then the curves $\widehat{\gamma}$, $\widehat{\gamma}_0$ and $\gamma$ are in the same component. The other preimages of $\gamma$, say $\widehat{\gamma}_i$, $i\in\mathbb{Z}\backslash\{0\}$ are curves connecting $\infty$ to the other preimages of $0$. 
In a neighborhood $\Omega$ of $\infty$, $\widehat{\gamma}_i\cap\Omega$ are almost $2\pi i$ translates of each other. This is because for large $|z|$ values $f_a(z+2\pi i)\approx f_a(z)$. Moreover the inverse image of a small neighborhood of $a$ will contain a left half plane, say $H_{-}$, and hence $\widehat{\gamma}_i\cap H_{-}\neq\emptyset$ for $i\in\mathbb{Z}$. This means $A_a^0$ is backward invariant, and thus $A_a^0$ is totally invariant. In other words, $A_a$ consists of only one component, i.e., $A_a=A_a^0$. Moreover, since $A_a$ contains the unbounded curves $\widehat{\gamma}_i$, it is unbounded.  A Cantor bouquet structure of the boundary $\partial A_a$ is guaranteed by Theorem \ref{thmbarxarem} (see Figure \ref{Acaptured}).\\
\\
($ii\Rightarrow i$) Now suppose that $A_a=A_a^0$. In this case Theorem \ref{Eremenko, Lyubich} guarantees that the asymptotic value $a$ is contained in $A_a^0$ as well, that is, $a\in\mathcal{C}^0$. 
\end{proof}
\begin{figure}
 \begin{center}
\includegraphics[height=6cm,width=6cm]{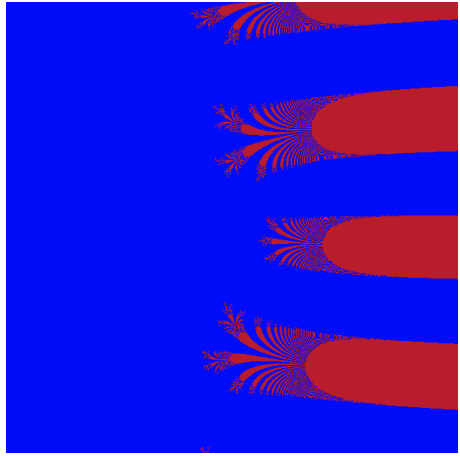}
\end{center}
\caption{\label{Acaptured}\small{Dynamical plane for the parameter $(0.4+0.6i)\in\mathcal{C}^0$ in $[-10,10]\times[-10,10]$.}}
\end{figure}

\noindent Set $\mathbb{D}_{r(a)}:=\{z;\;\;|z|<r(a)\}$. Recall that for each $a\in\mathcal{C}^0$, there exists $r(a)$ and a maximal domain $\Omega_a\in A_a^0$, for which the restriction of the unique B\"ottcher map $\phi_a|_{\Omega_a}:\Omega_a\rightarrow\mathbb{D}_{r(a)}$ -which conjugates $f_a$ to $z\mapsto z^2$-  is biholomorphic. 
 In a neighborhood of $0$, $\phi_a$  can be written as
\begin{eqnarray}\label{botsection}
\phi_a(z)=z\Big{(}\frac{f_a(z)}{z^2}\Big{)}^{1/2}\Big{(}\frac{f_a^2(z)}{(f_a(z))^2}\Big{)}^{1/2^2}...\Big{(}\frac{f_a^n(z)}{(f_a^{n-1}(z))^2}\Big{)}^{1/2^n}...
\end{eqnarray}
From the Taylor expansion, one can see that
\begin{equation*}
\phi_a(z)=\frac{a}{2}z+O(z^2).
\end{equation*}
This map has a continuous extension to the boundary $\partial \Omega_a\subset\widehat{\mathbb{C}}$. For this extension, we have $\phi_a(\infty)=-\phi_a(1)$, as explained in the introduction. With this identification, we parametrize the main hyperbolic component given by the following theorem.
\begin{thm}\label{parametrizationofthemainhyperboliccomponent} 
There exists a biholomorphic map from $\mathcal{C}^0\cup\{0\}$ to $\mathbb{D}$, which is given by:
\begin{eqnarray*}
\Phi:\mathcal{C}^0\cup\{0\}&\rightarrow&\mathbb{D}\notag\\
a&\mapsto &\phi_a(\infty).
\end{eqnarray*}
\end{thm}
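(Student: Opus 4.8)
The plan is to show that $\Phi$ is a proper holomorphic map onto $\mathbb{D}$ of degree one, and then to invoke the fact that a proper holomorphic map of degree one onto a disk is a biholomorphism. I regard $\mathcal{C}^0\cup\{0\}$ as a connected domain in parameter space: a punctured neighbourhood of $0$ lies in $\mathcal{C}^0$, since for small $a$ the map $f_a$ is a small perturbation of a superattracting germ and the asymptotic value $z=a$ sits well inside the immediate basin $A_a^0$. Thus $\mathcal{C}^0\cup\{0\}$ is a Riemann surface and the notion of a finite branched covering onto $\mathbb{D}$ applies.

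First I would establish holomorphicity. Near $0$ the B\"ottcher coordinate is given by the convergent product (\ref{botsection}), which is jointly holomorphic in $(a,z)$; the functional equation $\phi_a\circ f_a=\phi_a^2$ gives, for $n$ large, the exact identity $\phi_a(a)=\phi_a\big(f_a^n(a)\big)^{1/2^n}$, and since $a\in\mathcal{C}^0$ forces $f_a^n(a)\to 0$, the right-hand side exhibits $a\mapsto\phi_a(a)$ as holomorphic on $\mathcal{C}^0$. Because $\phi_a(\infty)^2=\phi_a(f_a(\infty))=\phi_a(a)$ and $\phi_a(\infty)=-\phi_a(1)$ is single-valued, $\Phi$ is a single-valued holomorphic square root of $a\mapsto\phi_a(a)$. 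From $\phi_a(z)=\tfrac a2 z+O(z^2)$ one computes $\phi_a(a)=\tfrac{a^2}{2}+O(a^3)$, hence $\Phi(a)=\tfrac{a}{\sqrt2}+O(a^2)$; in particular $\Phi$ extends holomorphically across $a=0$ with $\Phi(0)=0$ and $\Phi'(0)\neq 0$. Finally $\Phi$ maps into $\mathbb{D}$ because $|\Phi(a)|=r(a)<1$, the asymptotic value having positive potential and thus obstructing univalent extension of $\phi_a$ to the full immediate basin.

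Next I would prove properness, which I expect to be the main obstacle: one must show $|\Phi(a)|=r(a)\to 1$ as $a\to\partial\mathcal{C}^0$, equivalently that the potential $g_a(a)=2\log r(a)$ of the asymptotic value tends to $0$. The idea is that $\partial\mathcal{C}^0$ is precisely where hyperbolicity is lost, i.e.\ where the asymptotic value reaches $\partial A_a^0$: if some sequence $a_n\to\partial\mathcal{C}^0$ had $\Phi(a_n)\to w_0\in\mathbb{D}$, the asymptotic values would stay at bounded potential, hence in a compact part of the basin, and a normality/stability argument would keep $a_n$ inside $\mathcal{C}^0$, a contradiction. Controlling the geometry of the maximal B\"ottcher domain $\Omega_a$ as the parameter degenerates, and excluding that the asymptotic value remains interior, is the delicate point, and the place where the specific structure of the family (Proposition \ref{totallyinvariantcomponent} together with the finite-preimage structure of $a$) must be used.

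Finally, a proper holomorphic map from a connected Riemann surface onto $\mathbb{D}$ is a finite branched covering of some degree $d$, and $d$ equals the number of preimages, counted with multiplicity, of any value. Over $w=0$ the fibre is $\Phi^{-1}(0)=\{0\}$: indeed $\Phi(a)=0$ forces $\phi_a(a)=0$, i.e.\ the asymptotic value $z=a$ coincides with the fixed point $z=0$, which happens only for $a=0$; and there the local degree is $1$ since $\Phi'(0)\neq 0$. Hence $d=1$, so $\Phi$ is a biholomorphism onto $\mathbb{D}$, which proves the theorem. (Injectivity, needed only implicitly here, could alternatively be obtained by a rigidity argument: equal B\"ottcher positions of the asymptotic value produce a conformal conjugacy, forcing equality of parameters via Theorem \ref{constructionoff_a}.)
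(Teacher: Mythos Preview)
Your overall architecture---show $\Phi$ is holomorphic, proper, and of degree one over $0$---is exactly the paper's, and your degree computation via the local expansion $\Phi(a)\sim c\,a$ near $a=0$ matches as well (the paper gets the constant $-1/\sqrt{2}$ from $\Phi(a)=-\phi_a(1)$, but the sign is irrelevant).

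Where you diverge is in your expectation that properness is ``the main obstacle'' requiring control of the geometry of $\Omega_a$ and a stability argument. In the paper this step is three lines: since $|\Phi(a)|=e^{g_a(1)}$ and the Green's function $a\mapsto g_a(z)$ is continuous in the parameter, one has $|\Phi(a_n)|\to e^{g_a(1)}$ for any $a_n\to a\in\partial\mathcal{C}^0$; but $a\notin A_a^0$ forces $1\notin A_a^0$ (as $f_a(1)=a$), hence $g_a(1)=0$ and $|\Phi(a_n)|\to 1$. Your sketched contradiction argument (``bounded potential keeps $a_n$ in $\mathcal{C}^0$'') would, once made precise, reduce to exactly this continuity statement, so there is no genuine gap---you have simply not noticed that the Green's function already packages the needed compactness, and no appeal to Proposition~\ref{totallyinvariantcomponent} or the finite-preimage structure is required here.
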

\begin{proof}
 By using (\ref{botsection}), 
\begin{equation*}
\Phi(a)=\phi_a(\infty)=-\phi_a(1)=-a^{1/2}\Big{(}\frac{a}{2}+O(a^2)\Big{)}^{1/2^2}...\Big{(}\frac{a}{2}+O(a^2)\Big{)}^{1/2^n}...
\end{equation*}
As $a\rightarrow 0$ 
\begin{equation}\label{degreofphi}
\Phi(a)\approx -\frac{a}{\sqrt{2}}.
\end{equation}
Hence $0$ is a removable singularity, i.e., $\Phi$ extends to $0$ with $\Phi(0)=0$.\\
\\
Now we will show that $\Phi$ is proper. For any sequence $\{a_n\}_n\subset\mathcal{C}^0$ tending to a limit point $a\in\partial \mathcal{C}^0$, consider the sequence $\{g_{a_n}\}_n$ of Green's functions  associated to the superattracting fixed point $0$. Then we have 
\begin{equation*}
|\Phi(a_n)|=e^{g_{a_n}(1)}\rightarrow e^{g_a(1)}\;\;\;\mathrm{as}\;\;\;a_n\rightarrow a\in\mathcal{C}^0,
\end{equation*}
since the Green's function is continuous with respect to the parameter. Moreover, at any point $a\in\partial \mathcal{C}^0$, $g_a(1)=0$, since $a\notin A_a^0$. So $|\Phi(a_n)|\rightarrow 1$ as $a_n\rightarrow a\in\partial \mathcal{C}^0$, hence $\Phi$ is proper. Therefore, $\Phi$ is a covering map, and the degree is $1$ by (\ref{degreofphi}). In other words, the map $\Phi:\mathcal{C}^0\cup\{0\}\rightarrow\mathbb{D}$ is biholomorphic.
\end{proof}

\begin{cor}
This theorem guarantees that $\mathcal{C}^0$ is connected and $\mathcal{C}^0\cup\{0\}$ is simply connected.
\end{cor}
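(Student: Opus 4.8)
The plan is to transport the topological properties of the unit disk across the biholomorphism $\Phi$ established in Theorem \ref{parametrizationofthemainhyperboliccomponent}. Since $\Phi:\mathcal{C}^0\cup\{0\}\to\mathbb{D}$ is biholomorphic, it is in particular a homeomorphism, and both connectedness and simple connectedness are topological invariants. The open disk $\mathbb{D}$ is connected and simply connected, so by pulling these properties back through $\Phi^{-1}$ I would conclude at once that $\mathcal{C}^0\cup\{0\}$ is simply connected.

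To handle the connectedness of $\mathcal{C}^0$ itself, I would use that $\Phi(0)=0$, which was recorded in the proof of Theorem \ref{parametrizationofthemainhyperboliccomponent} (see (\ref{degreofphi})). Hence $\Phi$ restricts to a homeomorphism from $\mathcal{C}^0$ onto the punctured disk $\mathbb{D}\setminus\{0\}$. Since the punctured disk is path-connected --- any two of its points can be joined by an arc avoiding the origin, the ambient space being of real dimension two --- connectedness transfers back through $\Phi^{-1}$ to show that $\mathcal{C}^0$ is connected.

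There is essentially no obstacle here: all of the substantive work is contained in Theorem \ref{parametrizationofthemainhyperboliccomponent}, and the corollary is a purely topological consequence. The only point worth emphasizing is why the statement asserts simple connectedness for $\mathcal{C}^0\cup\{0\}$ but only connectedness for $\mathcal{C}^0$: removing the single interior point $0$ preserves connectedness but destroys simple connectedness, as $\mathbb{D}\setminus\{0\}$ has nontrivial fundamental group. This is precisely the reason the distinguished parameter $0$ must be adjoined before one can claim simple connectedness.
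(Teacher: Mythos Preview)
Your argument is correct and is precisely the intended one: the paper states this corollary without proof, as an immediate consequence of the biholomorphism $\Phi:\mathcal{C}^0\cup\{0\}\to\mathbb{D}$ with $\Phi(0)=0$, and you have simply spelled out that reasoning.
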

\begin{thm}\label{mainhyperboliccomponentisbounded} $\mathcal{C}^0$ is bounded.
\end{thm}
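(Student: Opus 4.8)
\emph{The plan is} to show that the asymptotic value cannot lie in the immediate basin $A_a^0$ once $|a|$ is large, so that $\mathcal{C}^0\subset\{|a|\le M\}$ for some $M$. By Proposition \ref{totallyinvariantcomponent}, the condition $a\in\mathcal{C}^0$ is equivalent to $A_a$ being unbounded and totally invariant, and in that case both singular values of $f_a$ --- the critical value $f_a(0)=0$ and the asymptotic value $a$ --- lie in the single completely invariant component $A_a^0$. Two consequences will drive the argument. First, by the singular value theory for entire maps with finitely many singular values (\cite{erlyu1992}), every attracting or indifferent cycle must have a singular value associated to its immediate basin; since both singular values are already consumed by the superattracting cycle $\{0\}$, the map $f_a$ can have \emph{no} other attracting or indifferent cycle. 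Second, $f_a$ lies in the Eremenko--Lyubich class $\mathcal{B}$ (its singular set is finite, hence bounded), so the escaping set $I(f_a)$ satisfies $I(f_a)\subset\mathcal{J}(f_a)$; as $a\in A_a^0\subset\mathcal{F}(f_a)$, the orbit of $a$ does not escape to $\infty$, and in fact $f_a^n(a)\to 0$. It therefore suffices to contradict one of these necessary conditions for every sufficiently large $|a|$.

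It is convenient to write $f_a=a\,G$ with $G(z)=e^z(z-1)+1$ independent of $a$, so that $f_a'(z)=a\,z\,e^z$ and the only finite critical point is the fixed point $0$. I would split the annulus $\{|a|\ge M\}$ according to $\Re a$. In the far-left regime $\Re a\le -3\log|a|$, the identity $f_a(z)-a=a\,e^z(z-1)$ gives $|f_a(a)-a|=|a|\,e^{\Re a}|a-1|\to 0$, and a contraction argument would produce a fixed point $p_a$ close to $a$ whose multiplier is
\begin{equation*}
f_a'(p_a)=a\,p_a\,e^{p_a}\approx a^2 e^{a},\qquad |a^2 e^{a}|=|a|^2 e^{\Re a}\le |a|^{-1}<1 .
\end{equation*}
Hence $p_a$ is attracting, giving a second attracting cycle distinct from $\{0\}$ and contradicting the first necessary condition. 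In the complementary regime the factor $e^a$, and with it $G(a)$, is no longer small, so $|f_a(a)|=|a|\,|G(a)|$ is comparable to or much larger than $|a|$; here the aim is to show that the orbit of $a$ leaves every large disk and escapes, contradicting $I(f_a)\subset\mathcal{J}(f_a)$ together with $a\in\mathcal{F}(f_a)$.

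The hard part is precisely this complementary regime, and in particular the transitional directions where $\Re a$ is comparable to $-\log|a|$: there the phase of $e^a$ decides whether an iterate lands in the right half-plane (where it is driven to $\infty$) or deep in the asymptotic tract (where it is mapped back near $a$), so no single deterministic estimate on $|f_a^n(a)|$ is available and one must track the orbit through both behaviors. An alternative and more uniform route --- in the spirit of this paper --- is to rescale by $w=\tfrac{a}{2}z$: the conjugated maps $F_a(w)=\tfrac{a}{2}f_a(\tfrac{2}{a}w)=\tfrac{a^2}{2}G(\tfrac{2}{a}w)$ converge locally uniformly to $w\mapsto w^2$ as $a\to\infty$, the rescaled repelling $\alpha$-fixed point $p_a\approx 2/a$ sits near $w=1$, and Carath\'eodory kernel convergence forces the rescaled immediate basins $\tfrac{a}{2}A_a^0$ to converge to the unit disk. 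Since the asymptotic value sits at $w=\tfrac{a^2}{2}$ with $|w|\to\infty$, the remaining obstacle is to upgrade this kernel convergence so as to exclude a thin invariant ``finger'' of $A_a^0$ reaching the asymptotic value; combined with the simple connectivity of $A_a^0$ and the univalence of the B\"ottcher coordinate on $\Omega_a$, this would yield $a\notin A_a^0$ for large $|a|$ and hence the boundedness of $\mathcal{C}^0$.
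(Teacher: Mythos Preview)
Your proposal is not a proof but a sketch with two openly admitted holes: in the regime $\Re a>-3\log|a|$ you have no mechanism to force escape of the orbit of $a$ (and indeed there is none, since the orbit can repeatedly visit the asymptotic tract), and in the rescaling approach you have no argument ruling out a thin $f_a$-invariant finger of $A_a^0$ reaching out to $a$. Both gaps are genuine, and neither is easy to close by the methods you suggest; the phase sensitivity you describe is precisely why orbit-tracking is the wrong tool here.

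The paper bypasses all of this with an elementary, purely local estimate. Writing the Taylor expansion $f_a(z)=a\sum_{n\ge 0}\tfrac{n+1}{(n+2)!}z^{n+2}$ and comparing with $h_a(z)=\tfrac{a}{2}z^2$, one checks that on the circle $|z|=r$ with $r\approx 0.405$,
\[
|f_a(z)|\ \ge\ |a|\,r\Bigl(\tfrac{3r}{2}+1-e^{r}\Bigr)\ >\ r
\qquad\text{whenever}\qquad
|a|>\frac{1}{1+\tfrac{3r}{2}-e^{r}}\approx 9.23 .
\]
Thus for such $a$ the map $f_a$ sends the whole circle $\{|z|=r\}$ outside the closed disk $\overline{\mathbb{D}_r}$, which forces $A_a^0\subset\mathbb{D}_r$; since $|a|>9.23>r$, the asymptotic value cannot lie in $A_a^0$, and $\mathcal{C}^0\subset\{|a|\le 9.23\}$. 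Note that this is exactly a quantitative version of your rescaling remark $F_a(w)\to w^2$: the Taylor estimate pins down an explicit invariant disk and thereby excludes the ``finger'' without any appeal to kernel convergence or orbit analysis.
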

\begin{proof}
Set $\mathbb{S}_r:=\partial \mathbb{D}_r=\{z;\;|z|<r\}$, and take $h_a(z):=a\frac{z^2}{2}$. We will find $r>0$, such that for all $z\in\mathbb{S}_r$,
\begin{equation}\label{condition}
|h_a(z)-z|>|h_a(z)-f_a(z)|.
\end{equation}
This will mean that $A_a^0\subset\mathbb{D}_r$, and will give a relationship between $a$ and $r$; we will see that for large values of $|a|$, $r$ is very small, so $\mathbb{D}_r$ and hence $A_a^0$ can not contain $a$.
\begin{itemize}
\item[i.] For $r\geq\frac{2}{|a|}$ and $|z|=r$,  
\begin{equation*}
|h_a(z)-z|=|\frac{az^2}{2}-z|=r|\frac{az}{2}-1|\geq r(\frac{|a|r}{2}-1).
\end{equation*}
\item[ii.] 
\begin{equation*}
 \begin{split}
    |h_a(z)-f_a(z)|&=|\frac{az^2}{2}-\sum_{n=0}^{\infty} a z^{n+2} \frac{(n+1)}{(n+2)!}\\
    &\leq |a|\sum_{n=0}^{\infty}  r^{n+3} \frac{(n+2)}{(n+3)!}\\
    &\leq|a|  \sum_{n=0}^{\infty}  r^{n+3} \frac{1}{(n+2)!}\\
   &=|a| r \sum_{n=0}^{\infty}  r^{n+2} \frac{1}{(n+2)!}\\
   & = |a|r(e^r-1-r).
    \end{split}
\end{equation*}
\end{itemize}
From i. and ii., $r$ and $a$ values satisfying 
\begin{equation*}
r(\frac{|a|r}{2}-1)>|a|r(e^r-1-r),
\end{equation*}
or equivalently
\begin{equation*}
1<|a|(\frac{3r}{2}-e^r+1)
\end{equation*}
also satisfies (\ref{condition}). Provided that  $\frac{3}{2}r-e^r+1>0$,  ($r\in (0, 0.7626)$, approximately) we obtain
\begin{equation}\label{relationar}
|a|>\frac{1}{1+\frac{3r}{2}-e^r}.
\end{equation}
By numerical computation,
\begin{equation*}
\min_{r\in(0,0,7626)}\frac{1}{1+\frac{3r}{2}-e^r}\approx9.2324.
\end{equation*}
 is obtained at $r\approx0.4054$. This means  for $|a|> 9.2324$, $f_a$ maps the circle $\mathbb{S}_r$ with $r\approx 0.4054$ outside itself. Thus $A_a^0$ is contained in the disk $\mathbb{D}_r$ with $\approx0.4054$  for parameter values $|a|>9.2324$. Therefore, the asymptotic value is not in $A_a^0$ for $|a|>9.2324$. Thus $\mathcal{C}^0$ is bounded.
\end{proof}


\section{Carath\'eodory topology}\label{caratopology}
The Carath\'eodory topology, introduced in \cite{car1912}, deals with sequences of domains and their convergence properties. In this section, we state the main definitions and prove some auxilary results which will be used in the proof of the Main Theorem. We start by defining convergence of a sequence of marked domains, following \cite{mcmul1994}.
\begin{defn}\label{definitioncar}  Consider the set of pointed domains ${\mathcal{D}}=\{(W,w),\;w\in W\subset\mathbb{C}\}$. We say a sequence $\{(U_n,u_n)\}_n\subset\mathcal{D}$ converges to $(U,u)$ in the sense of Carath\'eodory, if and only if the following holds:

\begin{itemize}
\item[i)] $u_n\rightarrow u$ as $n\rightarrow \infty$,
\item[ii)] for any compact set $C\subset U$ and for all sufficiently large $n\in\mathbb{N}$,  $C\subset U_n$, and
\item[iii)] for any connected, open set $N$ containing $u$,  if $N\subset U_n$ for infinitely many $n$, then $N\subset U$,
\end{itemize}
and we write $(U_n,u_n)\stackrel{Car} \longrightarrow (U,u)$. 
\end{defn}

\noindent The theory  is a powerful tool to relate the analytic behavior of maps to the geometric properties of their ranges. This relation is given by the following theorem.

\begin{thm}\label{carthm}(Carath\'{e}odory \cite{car1912}) Let $\{(U_n,u_n)\}_n$ be a sequence of pointed disks in $\mathcal{D}$, and let $\psi_n:\mathbb{D}\rightarrow U_n$ be the sequence of biholomorphic maps normalized such that $\psi_n(0)=u_n$, $n\in\mathbb{N}$ and $\psi_n'(0)>0$. Then $\psi_n$ converges to a univalent map $\psi:\mathbb{D}\rightarrow U$, uniformly on compact subsets of $\mathbb{D}$, where $\psi(0)=u$ and $\psi'(0)>0$, if and only if $(U_n,u_n)\stackrel{Car} \longrightarrow (U,u)$.
\end{thm}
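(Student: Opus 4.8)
The statement is the classical Carath\'eodory kernel theorem, so the plan is to prove the two implications separately, in both cases exploiting the compactness properties of normalized univalent maps: Koebe's $1/4$ and distortion theorems, Montel's theorem, Hurwitz's theorem, together with the uniqueness of the normalized Riemann map and the monotonicity of the hyperbolic metric under inclusion of domains. Throughout I would work under the standing assumption $U\subsetneq\mathbb{C}$, since a univalent map $\psi\colon\mathbb{D}\to U$ can exist only in that case; the degenerate kernel $U=\mathbb{C}$ corresponds exactly to $\psi_n'(0)\to\infty$, where no univalent limit exists, and is thus excluded by the formulation.

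For the implication $(U_n,u_n)\stackrel{Car}{\longrightarrow}(U,u)\Rightarrow$ convergence of the maps, I would first control the derivatives $\psi_n'(0)$. A lower bound $\psi_n'(0)\ge c>0$ comes from (ii): a fixed closed disk around $u$ lies in $U$, hence in $U_n$ for large $n$, and Schwarz's lemma applied to $\psi_n^{-1}$ on that disk bounds $\psi_n'(0)$ from below. An upper bound comes from (iii): if $\psi_n'(0)\to\infty$ along a subsequence, Koebe's $1/4$ theorem forces arbitrarily large disks $D(u,M)$ into $U_n$ infinitely often, so (iii) yields $D(u,M)\subset U$ for every $M$, i.e. $U=\mathbb{C}$, a contradiction. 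With $0<c\le\psi_n'(0)\le C$ and $\psi_n(0)=u_n$ bounded, the Koebe distortion theorem makes $\{\psi_n\}$ uniformly bounded on compacta, so by Montel a subsequence converges locally uniformly to a holomorphic $\psi$ with $\psi(0)=u$ and $\psi'(0)\ge c>0$; being nonconstant it is univalent by Hurwitz. To identify $\psi(\mathbb{D})$ with $U$, I would prove $\psi(\mathbb{D})\subseteq U$ from (iii): for $z_0\in\mathbb{D}$ the connected open set $N=\psi(D(0,r))$ with $|z_0|<r<1$ is covered by $\psi_n(D(0,r'))\subset U_n$ for large $n$ (a Rouch\'e/argument-principle comparison using local uniform convergence and univalence of $\psi$), so $N\subset U_n$ infinitely often and (iii) gives $N\subset U$. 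For $U\subseteq\psi(\mathbb{D})$, given $w\in U$ I would join $u$ to $w$ by a compact connected $L\subset U$ with interior; by (ii), $L\subset U_n$ eventually, and domain monotonicity of the hyperbolic metric bounds $d_{U_n}(u_n,w)$ uniformly, so the preimages $\psi_n^{-1}(w)$ remain in a fixed compact subset of $\mathbb{D}$ and a subsequential limit $\zeta$ gives $w=\psi(\zeta)$. Hence $\psi(\mathbb{D})=U$, and since the normalized Riemann map of $(U,u)$ with positive derivative is unique, all subsequential limits coincide, upgrading subsequential convergence to full convergence.

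For the converse, set $U=\psi(\mathbb{D})$ and $U_n=\psi_n(\mathbb{D})$. Condition (i) is immediate, as $u_n=\psi_n(0)\to\psi(0)=u$. For (ii), given a compact $C\subset U$ one has $\psi^{-1}(C)\subset\overline{D(0,r)}$ for some $r<1$, and a Rouch\'e argument (for large $n$, $\|\psi_n-\psi\|$ is small on $\overline{D(0,r')}$ with $r<r'<1$) shows each $w\in C$ is attained by $\psi_n$ in $D(0,r')$, so $C\subset U_n$. For (iii), suppose $N$ is connected, open, contains $u$, and $N\subset U_{n_k}$ along a subsequence; for $w\in N$ I would pick a path $\gamma\subset N$ from $u$ to $w$ and use $\rho_{U_{n_k}}\le\rho_N$ on $N$ to bound $d_{U_{n_k}}(u,w)$ by the fixed finite hyperbolic length of $\gamma$ in $N$, so $\psi_{n_k}^{-1}(w)$ stays in a compact subset of $\mathbb{D}$; passing to a limit gives $w=\psi(\zeta)\in U$, whence $N\subset U$.

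I expect the main obstacle to be the identification of the kernel with the image in both directions, and in particular the recurring need to show that the preimages $\psi_n^{-1}(w)$ do not escape to $\partial\mathbb{D}$. The clean way to handle this is the monotonicity of the hyperbolic metric under inclusion of domains, which converts the purely geometric hypotheses (ii)--(iii) into uniform distance bounds that survive passage to the limit. The other delicate ingredient is the pair of derivative estimates, the lower bound from (ii) and especially the upper bound from (iii) ruling out $U=\mathbb{C}$, since these are precisely what guarantee normality and the nondegeneracy $\psi'(0)>0$ needed for univalence.
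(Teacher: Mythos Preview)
The paper does not supply its own proof of this theorem: immediately after the statement it simply writes ``A proof can be found, for example, in \cite[Sect 122]{car1998}, or \cite[Sect 1.4]{pom1975}.'' So there is nothing in the paper to compare your argument against beyond noting that you are reproducing the classical proof that those references contain.

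Your outline is correct and is essentially the standard one. The derivative bounds via Schwarz and Koebe~$1/4$, normality via Koebe distortion and Montel, univalence of the limit via Hurwitz, the Rouch\'e argument for $\psi(\mathbb{D})\subset U$ and for condition~(ii) in the converse, and the hyperbolic-metric monotonicity argument to keep preimages $\psi_n^{-1}(w)$ in a compact set --- these are exactly the ingredients in Pommerenke's treatment. One small point to tighten: in your verification of~(iii) in the converse direction you bound $d_{U_{n_k}}(u,w)$, but what you actually need is $d_{U_{n_k}}(u_{n_k},w)=d_{\mathbb D}(0,\psi_{n_k}^{-1}(w))$; this is easily fixed by the triangle inequality together with the already-established~(ii), which gives a fixed disk about $u$ inside all $U_n$ for large $n$ and hence $d_{U_{n_k}}(u_{n_k},u)\to 0$. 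Also make explicit that in the Rouch\'e step the bound $|\psi(z)-w|\ge\operatorname{dist}\bigl(\psi(\partial D(0,r')),\psi(\overline{D(0,r)})\bigr)>0$ is uniform in $w$, so that $N\subset U_n$ holds for all large $n$ simultaneously, as~(iii) requires.
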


\noindent A proof can be found, for example, in \cite[Sect 122]{car1998}, or \cite[Sect 1.4]{pom1975}.

\begin{rem}\label{limitdomainsimplyconnected}  The presence of the univalent map $\psi:\mathbb{D}\rightarrow U$ in Theorem \ref{carthm}  guarantees simple connectedness of the limit domain $U$.
\end{rem}

\noindent Convergence of pointed domains in the sense of Carath\'{e}odory  is related to Hausdorff convergence properties of their complements. We first recall the definition of Hausdorff convergence.

\begin{defn}  Let $\rho$ be the spherical metric in $\widehat{\mathbb{C}}$, and let $A$ and $B$ be nonempty compact sets in $\widehat{\mathbb{C}}$. Let $\Omega_{\epsilon}(A)$ and $\Omega_{\epsilon}(B)$ denote $\epsilon$-neighborhoods of the sets $A$ and $B$, respectively. \textit{The Hausdorff $\rho$-distance} $d_H(A,B)$ is given by:
\begin{equation*}
d_H(A,B)=\inf\{\epsilon>0:A\subset \Omega_{\epsilon}(B), B\subset \Omega_{\epsilon}(A)\}.
\end{equation*}
The metric defined by this distance is called \textit{the Hausdorff $\rho$-metric} on the set of all nonempty compact subsets of $\widehat{\mathbb{C}}$. Note that $d_H(.,.)$  depends on $\rho$.
\end{defn}
\noindent For nonempty compact sets $K$ and $K_n$, $K_n\rightarrow K$ in the Hausdorff metric, if for arbitrary $\epsilon>0$, there exists $N\in\mathbb{N}$, such that for all $n\geq N$, $K\subset \Omega_{\epsilon}(K_n)$, and $K_n\subset \Omega_{\epsilon}(K)$. Equivalently, $d_H(K,K_n)\rightarrow 0$ as $n\rightarrow \infty$. The space of all nonempty compact subsets of the sphere is a compact metric space. For details, see for example \cite[Prop 2.1, Cor 2.2]{mcmul1996}.\\
\\
We next give the relation between Carath\'{e}odory convergence of pointed domains and Hausdorff convergence of their complements, which actually is an alternative to Definition \ref{definitioncar}.
\begin{thm}\label{carconalter}
Let $\{U_n\}_n$ be a sequence of domains. The sequence of pointed domains $\{(U_n,u_n)\}_n$ converges to $(U,u)$ as $n\rightarrow\infty$ in the sense of Carath\'{e}odory, if and only if $u_n\rightarrow u$, and for any subsequence $K_{n_k}=\widehat{\mathbb{C}}\backslash U_{n_k}$ convergent to some compact set $K$ in the Hausdorff topology on compact sets of the sphere, $U$ is the component of $\widehat{\mathbb{C}}\backslash K$, which contains $u$. 
 \end{thm}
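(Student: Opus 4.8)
The plan is to prove the equivalence of the two characterizations of Carath\'eodory convergence in Theorem \ref{carconalter} by establishing each direction separately, exploiting the compactness of the space of nonempty compact subsets of $\widehat{\mathbb{C}}$ in the Hausdorff metric. Throughout, write $K_n := \widehat{\mathbb{C}} \setminus U_n$ and note that since the ambient space is compact, every subsequence of $\{K_n\}_n$ has a further subsequence converging in the Hausdorff topology; this compactness is what makes the subsequential formulation natural. The marked-point condition $u_n \to u$ is common to both formulations, so the real content is matching the topological condition on $U$ (as stated via Hausdorff limits of complements) with conditions (ii) and (iii) of Definition \ref{definitioncar}.

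First I would prove the forward direction: assume $(U_n,u_n) \xrightarrow{Car} (U,u)$ and let $K_{n_k} \to K$ be any convergent subsequence of complements. I must show $U$ equals the component of $\widehat{\mathbb{C}} \setminus K$ containing $u$. The containment $U \subset \widehat{\mathbb{C}} \setminus K$ follows from condition (ii): any compact $C \subset U$ lies in $U_{n_k}$ for large $k$, hence is disjoint from $K_{n_k}$, and a Hausdorff-limit argument shows $C$ avoids $K$; exhausting $U$ by compacta gives $U \cap K = \emptyset$, and since $U$ is connected and contains $u$, it sits inside the component of $\widehat{\mathbb{C}}\setminus K$ through $u$. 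For the reverse containment I would take the component $V$ of $\widehat{\mathbb{C}} \setminus K$ containing $u$, pick any relatively compact connected open $N$ with $u \in N \subset V$, and argue via the Hausdorff convergence $K_{n_k} \to K$ that $N$ is disjoint from $K_{n_k}$ for large $k$, hence $N \subset U_{n_k}$; condition (iii) then forces $N \subset U$. Exhausting $V$ by such sets $N$ yields $V \subset U$, completing this direction.

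Next I would prove the converse: assume $u_n \to u$ and that for every Hausdorff-convergent subsequence $K_{n_k} \to K$, the domain $U$ is the $u$-component of $\widehat{\mathbb{C}} \setminus K$. I must recover (ii) and (iii). For (ii), suppose for contradiction that some compact $C \subset U$ fails to lie in $U_n$ for infinitely many $n$; pass to a subsequence along which $C \not\subset U_{n_k}$ and, using compactness, along which $K_{n_k} \to K$. Then each $K_{n_k}$ meets $C$, and a Hausdorff-limit argument produces a point of $K$ in $C$, so $C \cap K \neq \emptyset$; but $C \subset U$ which is disjoint from $K$ by hypothesis, a contradiction. For (iii), if $N$ is connected open containing $u$ with $N \subset U_n$ for infinitely many $n$, pass to a subsequence with $K_{n_k} \to K$; then $N \cap K_{n_k} = \emptyset$ forces $N \cap K = \emptyset$, so $N$ lies in a single component of $\widehat{\mathbb{C}} \setminus K$, which (containing $u$) must be $U$ by hypothesis.

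The main obstacle I anticipate is the careful interchange of limits in the Hausdorff arguments, particularly establishing the open-set containments $N \subset U_{n_k}$ from the disjointness $N \cap K = \emptyset$. The subtlety is that Hausdorff convergence controls the sets only up to $\epsilon$-neighborhoods, so to conclude that a \emph{fixed} compact or relatively compact set stays clear of $K_{n_k}$ I need a uniform separation: if $N$ is relatively compact with $\overline{N} \cap K = \emptyset$, then $\mathrm{dist}_\rho(\overline N, K) = \delta > 0$, and Hausdorff convergence $d_H(K_{n_k}, K) < \delta/2$ for large $k$ guarantees $K_{n_k} \subset \Omega_{\delta/2}(K)$, which is disjoint from $N$. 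Handling the boundary cases cleanly—especially points near $\partial U$ and the role of $\infty$ in the spherical metric—is where I would spend the most care, but the structure reduces to these elementary neighborhood estimates once the compactness of the hyperspace of $\widehat{\mathbb{C}}$ is invoked.
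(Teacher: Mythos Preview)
Your proposal is correct and follows essentially the same approach as the paper: both directions are handled by the same containment arguments (compact sets in $U$ via condition (ii), relatively compact open sets via condition (iii)), with the Hausdorff $\epsilon$-separation estimates you anticipate playing exactly the role they do in the paper's proof. The only cosmetic difference is that the paper phrases the forward direction by fixing a point $z$ and a compact connected set containing $\{z,u\}$, whereas you exhaust $U$ by arbitrary compacta; the substance is identical.
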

\begin{proof}
Suppose $(U_n,u_n)\stackrel{Car} \longrightarrow (U,u)$. Set $K_n=\widehat{\mathbb{C}}\backslash U_n$. Since the space of compact sets of $\widehat{\mathbb{C}}$ equipped with the Hausdorff $\rho$-metric is compact, there exists a convergent subsequence $\{K_{n_k}\}_k$ and a compact set $K$, such that $K_{n_k}\rightarrow K$ as $k\rightarrow\infty$. We are going to show that in this case, $U$ is the connected component of $\widehat{\mathbb{C}}\backslash K$, which contains $u$. 
\\
\\
\noindent Let $U'$ be the connected component of $\widehat{\mathbb{C}}\backslash K$, which contains $u$. Recall that by $d_{\rho}(.,.)$ we denote the spherical distance, and by $d_H(.,.)$ the Hausdorff $\rho$-distance defined on the compact sets of $\widehat{\mathbb{C}}$. We want to prove that $U'=U$:
\begin{itemize}
\item[i.] Take an arbitrary point $z\in U$. Let $C\subset U$ be a compact and connected set containing $\{z,u\}$ in $U$. Then there exists $\epsilon>0$ such that $d_{\rho}(C, K)=2\epsilon>0$. By convergence, $C\subset U_n$, for sufficiently large $n\in\mathbb{N}$. On the other hand, increasing $n$ if necessary,  $d_{H}(K_{n_k},K)<\epsilon$. This means for sufficiently large $n_k$, $d_{\rho}(C,K_{n_k})>\epsilon$, i.e.,  $C\cap K_{n_k}=\emptyset$. This implies $C\cap K=\emptyset$, that is, $C\subset\widehat{\mathbb{C}}\backslash K$. Since $C$ is connected, then it is contained in a connected  component of $\widehat{\mathbb{C}}\backslash K$. Since $u\in C$, then $C\subset U'$. Since $z$ is arbitrary in $U$, we obtain  $U\subset U'$.
\item[ii.] Take an arbitrary point $z\in U'$. Let $N$ be an open and connected set containing $\{z,u\}$, which is relatively compact in $U'$. Then there exists $\epsilon>0$ such that $d_{\rho}(\overline{N}, K)>2\epsilon$. Increasing $k$ if necessary, $d_H(K,K_{n_k})<\epsilon$. Hence $d_{\rho}(\overline{N},K_{n_k})>\epsilon$. This means $N\subset U_{n_k}$. Thus $N\subset U_n$ for infinitely many $n$, and hence $N\subset U$. Since $z$ is arbitrary in $U'$, $U'\subset U$.
\end{itemize}

\noindent By $i.$ and $ii.$, we obtain $U=U'$.\\
\\
Now we prove the converse of the statement: Suppose $u_n\rightarrow u$ as $n\rightarrow \infty$, and there exists a set $U$, such that $u\in U$, and $U$ is the connected component of $\widehat{\mathbb{C}}\backslash K$, where $K$ is the Hausdorff limit of any subsequence $K_{n_k}:=\widehat{\mathbb{C}}\backslash U_{n_k}$. We will show that  $(U_n,u_n)\stackrel{Car} \longrightarrow (U,u)$. 

\begin{itemize}
\item[i.] Let $C$ be a compact set in $U$. We claim that there exists $N\in\mathbb{N}$ such that  for $n\geq N$, $C\subset U_{n}$, in other words, there exists $\epsilon>0$ such that $d_{\rho}(C,K_n)>\epsilon$. If not, there exists a subsequence $K_{n_k}$ and $z_k\in K_{n_k}$ such that $d_{\rho}(z_{k},C)\leq \epsilon$, where $z_k\rightarrow z\in K$ as $k\rightarrow\infty$. Passing to  subsequence if necessary, suppose $K_{n_k}\rightarrow K$ as $k\rightarrow \infty$ and $z_k\rightarrow z\in K$ with $d_{\rho}(z,C)\leq \epsilon$. This contradicts $K\cap U=\emptyset$.
\item[ii.] For an open and connected set $N$ containing $u$, such that $N\subset U_n$ for infinitely many $n$,  we will show that $N\subset U$. Let $\{U_{n_k}\}_k$ be a subsequence such that  $N\subset U_{n_k}$. Then $N\cap K_{n_k}=\emptyset$, where $K_{n_k}=\widehat{C}\backslash U_{n_k}$. Taking a subsequence if necessary, $K_{n_k}\rightarrow K$, so $N\cap K=\emptyset$. And since $u\in N$, $N\subset U$.
\end{itemize}
By $i.$ and $ii.$, we conclude that $(U_n,u_n)\stackrel{Car} \longrightarrow (U,u)$. 

\end{proof}

\noindent We focus on convergence to a proper subdomain of the plane. In order to do so, we restrict our attention to \textit{precompact} sequences in 
\begin{equation}\label{D^*}
{\mathcal{D}^*}:=\{(W,w);\;\;w\in W\subsetneq\mathbb{C}\}\subset\mathcal{D}.
\end{equation}
\begin{defn}
A sequence of pointed domains $\{(U_n,u_n)\}_n$ is called precompact in $\mathcal{D}^*$, if and only if  every subsequence has a convergent subsubsequence  in $\mathcal{D}^*$, in the sense of Carath\'eodory. 
\end{defn}
\noindent Precompactness is characterized as follows.
\begin{prop}\label{precompactness1}
The sequence of pointed domains $\{(U_n,u_n)\}_n$ is precompact in $\mathcal{D}^*$, if and only if  $\{u_n\}_n$ is bounded,  and  there exists $1<L<\infty$, such that
\begin{equation}\label{precompactnessbound1}
\frac{1}{L}\leq d_E(u_n,\partial U_n)\leq L,
\end{equation}
 where $d_E$ denotes the Euclidian distance.
\end{prop}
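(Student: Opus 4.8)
The plan is to deduce the characterization directly from the Hausdorff description of Carath\'eodory convergence established in Theorem \ref{carconalter}, together with the compactness of the space of nonempty compact subsets of $\widehat{\mathbb{C}}$ in the Hausdorff $\rho$-metric. The geometric role of the two bounds is the crux: the lower bound $d_E(u_n,\partial U_n)\geq 1/L$ prevents the boundary from collapsing onto the marked point, so that $u$ stays in the interior of the limit domain, while the upper bound $d_E(u_n,\partial U_n)\leq L$ prevents the finite boundary from escaping to $\infty$, so that the limit domain remains a \emph{proper} subdomain of $\mathbb{C}$, i.e. stays in $\mathcal{D}^*$. As a sanity check on the normalization, one may recall that by the Koebe $1/4$-theorem the quantity $d_E(u_n,\partial U_n)$ is comparable, up to the factor $4$, to the conformal radius $\psi_n'(0)$ of the normalized maps of Theorem \ref{carthm}, so that \eqref{precompactnessbound1} is exactly the statement that these conformal radii are bounded away from $0$ and from $\infty$; the actual argument, however, I would run through complements, since that avoids any appeal to simple connectedness.

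For the implication that the bounds imply precompactness, I would start from an arbitrary subsequence and extract convergent data in two steps. First, since $\{u_n\}_n$ is bounded, pass to a subsubsequence with $u_{n_k}\to u\in\mathbb{C}$. Second, since each complement $K_n=\widehat{\mathbb{C}}\setminus U_n$ is a nonempty compact subset of $\widehat{\mathbb{C}}$, compactness of the Hausdorff space lets me pass to a further subsubsequence with $K_{n_k}\to K$. It then remains to verify that $(U,u)\in\mathcal{D}^*$, where $U$ is the component of $\widehat{\mathbb{C}}\setminus K$ containing $u$; Theorem \ref{carconalter} will then yield $(U_{n_k},u_{n_k})\stackrel{Car} \longrightarrow (U,u)$. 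The lower bound gives that the Euclidean disk $D(u_n,1/L)$ of radius $1/L$ about $u_n$ is contained in $U_n$, hence every finite $z\in K$ is a limit of points $z_{n_k}\in K_{n_k}$ with $|z_{n_k}-u_{n_k}|\geq 1/L$, so $|z-u|\geq 1/L$; thus $u\notin K$ and $u$ genuinely lies in an open component $U$. The upper bound produces finite points $w_n\in\partial U_n\subset K_n$ with $|w_n-u_n|$ bounded by roughly $L$, which are therefore bounded; a subsubsequence converges to a finite $w\in K$, so $K\neq\{\infty\}$ and hence $U\subsetneq\mathbb{C}$. This shows $(U,u)\in\mathcal{D}^*$ and establishes precompactness.

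For the converse I would argue by contraposition, producing in each failure mode a subsequence no subsubsequence of which can converge in $\mathcal{D}^*$. If $\{u_n\}_n$ is unbounded, a subsequence with $u_{n_k}\to\infty$ violates condition (i) of Definition \ref{definitioncar} for any finite limit point, so no limit in $\mathcal{D}^*$ exists. If $d_E(u_{n_k},\partial U_{n_k})\to 0$ along some subsequence, choose $z_{n_k}\in\partial U_{n_k}$ with $|z_{n_k}-u_{n_k}|\to 0$; passing to Hausdorff-convergent complements gives $z_{n_k}\to u$ and $u\in K$, so $u$ cannot lie in an open component of $\widehat{\mathbb{C}}\setminus K$, contradicting $u\in U$. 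If $d_E(u_{n_k},\partial U_{n_k})\to\infty$, we may assume $\{u_{n_k}\}_k$ bounded, since otherwise the first case applies; then every finite point of $K_{n_k}$ recedes to $\infty$, so any Hausdorff limit is $K=\{\infty\}$, forcing $U=\mathbb{C}\notin\mathcal{D}^*$. In all three cases precompactness fails, which proves the contrapositive.

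The main obstacle I anticipate is the careful spherical-versus-Euclidean bookkeeping needed to push the Euclidean bounds on $d_E(u_n,\partial U_n)$ through a Hausdorff limit taken in the spherical metric on $\widehat{\mathbb{C}}$: one must ensure, on the one hand, that boundary points staying at Euclidean distance $\geq 1/L$ from the bounded marked points really do stay away from $u$ in the limit, and, on the other hand, that a nearest finite boundary point at Euclidean distance $\leq L$ is not lost to $\infty$ under spherical convergence. These are precisely the two mechanisms by which the inequalities in \eqref{precompactnessbound1} forbid the two ways a Carath\'eodory limit can leave $\mathcal{D}^*$, namely the marked point being swallowed by the boundary and the domain filling up all of $\mathbb{C}$.
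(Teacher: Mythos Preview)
Your proposal is correct and follows essentially the same route as the paper: both directions are deduced from Theorem~\ref{carconalter} together with compactness of the Hausdorff space of nonempty compact subsets of $\widehat{\mathbb{C}}$, with the two inequalities in \eqref{precompactnessbound1} ruling out, respectively, $u\in K$ and $K\subset\{\infty\}$. Your contrapositive for necessity is organized a bit more explicitly into the three failure modes, and you correctly write $K=\{\infty\}$ where the paper writes $K=\emptyset$, but the argument is the same.
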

\begin{proof}
First observe that any given subsequence $\{(U_{n_k},u_{n_k})\}_k$ is divergent, if $u_{n}\rightarrow\infty$. So boundedness of $\{u_n\}_n$ is necessary for precompactness. Now suppose there is no $L\in(1,\infty)$, such that $\frac{1}{L}\leq d_E(u_n,\partial U_n)\leq L$. Then obviously, for any subsequence $\{U_{n_k}\}_k$ and $K_{n_k}:=\widehat{\mathbb{C}}\backslash U_{n_k}$, 
\begin{equation}\label{contradictioninequality}
0\leq d_E(u_{n_k},K_{n_k})\leq\infty.
\end{equation}
 
\noindent Since the space of compact subsets of 
$\widehat{\mathbb{C}}$ equipped with Hausdorff $\rho$-metric is compact,  there is a convergent subsubsequence $\{K_{n_{k_m}}\}_m$, and a compact set $K$, such that $K_{n_{k_m}}\rightarrow K$. Considering (\ref{contradictioninequality}), limit set $K$ may satisfy

\begin{itemize}
\item[i.]  $d_E(u,K)=0$, i.e., $u\in K$, or
\item[ii.] $d_E(u,K)=\infty$, i.e., $K=\emptyset$.
\end{itemize}
In either of the cases above $\{(U_{n_{k_m}},u_{n_{k_m}})\}_m$ is divergent, by Theorem \ref{carconalter}. So, in order to rule out the possibilities given by $i.$ and $ii.$ the inequality (\ref{precompactnessbound1}) is a necessary condition for precompactness in $\mathcal{D}^*$.\\
\\
Now suppose (\ref{precompactnessbound1}) holds, and let $\{(U_{n_k},u_{n_k})\}_k$ be a subsequence in $\mathcal{D}^*$. We want to prove that there exists a subsubsequence $\{(U_{n_{k_m}},u_{n_{k_m}})\}_m$ which converges in the sense of Carath\'{e}odory. We use Theorem \ref{carconalter}. For $K_{n_k}=\widehat{\mathbb{C}}\backslash U_{n_k}$, after passing to a subsequence, $\{K_{n_{k_m}}\}_m$ converges to a compact set $K\in\widehat{\mathbb{C}}$, and $u_{n_{k_m}}\rightarrow u$. The inequality (\ref{precompactnessbound1}) implies $\frac{1}{L}\leq d_E(u,K)\leq L$. This assures $\widehat{\mathbb{C}}\backslash K$ has a connected component $U$, which is a proper subdomain containing $u$. Thus,  $(U_{n_{k_m}},u_{n_{k_m}})\stackrel{Car} \longrightarrow(U,u)$, by Theorem \ref{carconalter}.
\end{proof}
 
\subsection{Hyperbolic geometry and domain convergence}
\subsubsection{Preliminaries on hyperbolic geometry}
We restrict ourselves to the domains in the complex sphere $\widehat{\mathbb{C}}$. We recall that any domain in $\widehat{\mathbb{C}}$ which has at least three boundary points is called hyperbolic. Here we denote  by $d_{\Omega}(.,.)$, the hyperbolic distance induced by the hyperbolic metric defined in a hyperbolic set $\Omega$. \\
\\
The following two famous theorems have a key role in the proof of the Main Theorem. We start by the  Schwarz-Pick's Lemma, which tells that holomorphic maps have the property that they do not increase the hyperbolic metric. 
\begin{thm}\label{schpick}(Schwarz-Pick Lemma)
Let $\Omega_1, \Omega_2\subset\widehat{\mathbb{C}}$ be hyperbolic, and suppose $f:\Omega_1\rightarrow \Omega_2$ is a holomorphic map. Then for all $z,w\in \Omega_1$
\begin{equation}\label{SP}
d_{\Omega_2}(f(z),f(w))\leq d_{\Omega_1}(z,w).
\end{equation}

\noindent More precisely, if $f$ is a covering map, then it is a local hyperbolic isometry with respect to the two metrics. In this case, the hyperbolic arc lengths are preserved for the respective hyperbolic metrics. If $f$ is not a cover, then it 
 decreases the hyperbolic arc lengths and distances, that is, the strict inequality holds in (\ref{SP}).
\end{thm}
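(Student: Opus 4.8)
The plan is to reduce everything to the unit disk via the Uniformization Theorem and the classical Schwarz Lemma. Since $\Omega_1$ and $\Omega_2$ are hyperbolic, each is holomorphically covered by $\mathbb{D}$; I would fix covering maps $\pi_1:\mathbb{D}\to\Omega_1$ and $\pi_2:\mathbb{D}\to\Omega_2$. By definition the hyperbolic metric on $\Omega_i$ is the unique metric for which $\pi_i$ is a local isometry from the Poincar\'e metric on $\mathbb{D}$; that is, $\pi_i$ preserves hyperbolic arc length, and consequently $d_{\Omega_i}(p,q)$ equals the infimum of Poincar\'e lengths of preimages of paths from $p$ to $q$. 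This is the structural fact I will exploit throughout.

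First I would establish the lemma on the disk itself. Applying the Schwarz Lemma after pre- and post-composing with automorphisms of $\mathbb{D}$ gives, for any holomorphic $g:\mathbb{D}\to\mathbb{D}$, the infinitesimal estimate
\[
\frac{|g'(w)|}{1-|g(w)|^2}\leq\frac{1}{1-|w|^2},
\]
with equality at one (hence every) point if and only if $g\in\mathrm{Aut}(\mathbb{D})$. Integrating along paths, $g$ does not increase Poincar\'e distance, and the inequality is strict for every pair of distinct points unless $g$ is a M\"obius automorphism.

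Next I would lift $f$. Because $\mathbb{D}$ is simply connected, the lifting criterion is automatically satisfied, so the map $f\circ\pi_1:\mathbb{D}\to\Omega_2$ lifts through $\pi_2$ to a holomorphic map $\tilde f:\mathbb{D}\to\mathbb{D}$ with $\pi_2\circ\tilde f=f\circ\pi_1$. Then, given $z,w\in\Omega_1$, I would choose preimages under $\pi_1$ together with a path realizing the hyperbolic distance up to an arbitrarily small error; since $\pi_1,\pi_2$ are local isometries and $\tilde f$ contracts the Poincar\'e metric, the inequality $d_{\Omega_2}(f(z),f(w))\le d_{\Omega_1}(z,w)$ follows by pushing the disk estimate down through the covers.

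Finally, the rigidity dichotomy comes from the equivalence that $f$ is a covering map if and only if $\tilde f\in\mathrm{Aut}(\mathbb{D})$. Indeed, if $f$ is a covering then $f\circ\pi_1$ is a universal cover of $\Omega_2$, forcing $\tilde f$ to be an isomorphism of universal covers, i.e. an automorphism, which is a Poincar\'e isometry; conversely, if $\tilde f\in\mathrm{Aut}(\mathbb{D})$ then $f\circ\pi_1=\pi_2\circ\tilde f$ is a covering, and hence so is $f$. In the covering case $f$ preserves hyperbolic arc length locally, giving the isometry statement, while in the non-covering case $\tilde f$ is not an automorphism, so the disk estimate is strict at every point and the strict inequality descends. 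The main obstacle I anticipate is the covering-theoretic bookkeeping, namely justifying the existence of the lift and the equivalence above cleanly, rather than the metric estimate itself, which is essentially immediate once the uniformizing picture is in place.
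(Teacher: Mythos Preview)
Your proposal is correct and follows the standard argument: reduce to the disk via uniformization, invoke the classical Schwarz Lemma to obtain the infinitesimal Schwarz--Pick inequality on $\mathbb{D}$, lift $f$ through the universal covers, and read off the dichotomy from whether the lift is a M\"obius automorphism. The covering-theoretic points you flag (existence of the lift, and the equivalence ``$f$ is a cover $\Leftrightarrow$ $\tilde f\in\mathrm{Aut}(\mathbb{D})$'') are indeed the only places requiring care, and your sketch handles them adequately.

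As for comparison with the paper: there is nothing to compare. The paper does not prove this theorem; it simply states it and refers the reader to the literature (Beardon--Minda). Your argument is precisely the one found in such references, so in effect you have supplied what the paper outsources.
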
 

\noindent For the proof, see, for example \cite{bearmin2000}. \\
\\
For a given pair of hyperbolic sets $\Omega_1, \Omega_2\subset\widehat{\mathbb{C}}$ with $\Omega_1\subset\Omega_2$, assigning the identity map $Id:\Omega_1\rightarrow\Omega_2$, the Schwarz-Pick Lemma gives the Comparison Principle, which we state in the following.
\begin{thm}\label{compprin} (Comparison Principle)
Let $\Omega_1, \Omega_2\subset\widehat{\mathbb{C}}$ be hyperbolic, and suppose that  $\Omega_1\subset\Omega_2$. Then for all $z,w\in\Omega_1$,
\begin{equation}
d_{\Omega_2}(z,w)\leq d_{\Omega_1}(z,w).
\end{equation} 
\end{thm}

\begin{lem}\label{lem3}
Let $\Omega\subset \widehat{\mathbb{C}}$ be hyperbolic, and let $\{x_n\}_n$ and $\{y_n\}_n$ be sequences in $\Omega$. Suppose  $d_{\Omega}(x_n,y_n)\leq \kappa<\infty$. If $x_n\rightarrow x\in\partial \Omega$, then $y_n\rightarrow x$. 
\end{lem}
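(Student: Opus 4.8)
The plan is to prove the stronger, purely metric fact that \emph{hyperbolic balls of a fixed radius shrink to a point in the spherical metric as their centre tends to the boundary}. Applied with centres $x_n$ this immediately yields the lemma: since $d_\Omega(x_n,y_n)\le\kappa$, the point $y_n$ lies in the closed ball $\overline{B}_\Omega(x_n,\kappa)$, and if these balls collapse onto $\{x\}$ then $y_n\to x$.

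First I would transfer the geometry to the unit disc. As $\Omega$ is hyperbolic it admits a holomorphic universal covering $\pi:\mathbb{D}\to\Omega$, and $\widehat{\mathbb{C}}\setminus\Omega$ contains at least three points. For each $n$ I pick $w_n\in\mathbb{D}$ with $\pi(w_n)=x_n$ and an automorphism $M_n\in\mathrm{Aut}(\mathbb{D})$ with $M_n(0)=w_n$; then $\pi_n:=\pi\circ M_n:\mathbb{D}\to\Omega$ is again a universal covering with $\pi_n(0)=x_n$. Because $\pi_n$ is holomorphic it does not increase hyperbolic distance, and because it is a covering it is a local isometry (Theorem~\ref{schpick}); from this one checks directly that $\overline{B}_\Omega(x_n,\kappa)=\pi_n\big(\overline{B}_{\mathbb{D}}(0,\kappa)\big)$, where $\overline{B}_{\mathbb{D}}(0,\kappa)=\{|w|\le\rho\}$ is a \emph{fixed} compact disc with $\rho<1$ depending only on $\kappa$, independent of $n$. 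The whole problem is thereby reduced to the behaviour of the maps $\pi_n$ on this one fixed disc.

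The key step is to show that $\pi_n\to x$ locally uniformly on $\mathbb{D}$. Every $\pi_n$ has image $\Omega$, hence omits the (at least three) fixed points of $\widehat{\mathbb{C}}\setminus\Omega$, so by Montel's theorem $\{\pi_n\}$ is a normal family of maps into $\widehat{\mathbb{C}}$. Let $\pi_\ast$ be the locally uniform limit along some subsequence; then $\pi_\ast(0)=\lim x_n=x\in\partial\Omega$. I claim $\pi_\ast\equiv x$. If not, $\pi_\ast$ is a nonconstant meromorphic map, hence open, so $\pi_\ast(\mathbb{D})$ is an open neighbourhood of $x$. Since $x\in\partial\Omega$, points of $\widehat{\mathbb{C}}\setminus\Omega$ accumulate at $x$, so some $c\in\widehat{\mathbb{C}}\setminus\Omega$ lies in $\pi_\ast(\mathbb{D})$, say $c=\pi_\ast(w_0)$. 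By Hurwitz's theorem the maps $\pi_n$ would then assume the value $c$ near $w_0$ for all large $n$, contradicting $c\notin\Omega=\pi_n(\mathbb{D})$. Hence $\pi_\ast\equiv x$; as the limit is this same constant along every convergent subsequence, $\pi_n\to x$ locally uniformly, in particular uniformly on $\{|w|\le\rho\}$.

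It follows that $\overline{B}_\Omega(x_n,\kappa)=\pi_n(\{|w|\le\rho\})$ has spherical diameter tending to $0$ and converges to $\{x\}$; since $y_n$ belongs to this set, $y_n\to x$. (Formally, the normality argument is run along an arbitrary subsequence of indices, producing a further subsequence with $y_n\to x$; as every subsequence has such a sub-subsequence, the full sequence converges to $x$.) I expect the main obstacle to be exactly the possibility that $y_n$ escapes to a boundary point \emph{different} from $x$: the naive completeness argument—fixing $z_0\in\Omega$ and using $d_\Omega(z_0,x_n)\to\infty$—only excludes $y_n$ limiting to an interior point of $\Omega$, and it is the uniform shrinking of the balls, obtained through the normal-family and Hurwitz argument above, that handles the genuinely delicate case of two distinct boundary limits.
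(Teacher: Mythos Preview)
Your proof is correct. The paper does not actually supply its own proof of this lemma; it simply refers the reader to \cite[Thm~3.4]{mil2006} for the general Riemann-surface statement. Your argument---pulling back to the disc via normalized universal covers $\pi_n$ with $\pi_n(0)=x_n$, then invoking Montel and Hurwitz to force $\pi_n\to x$ locally uniformly, so that $\pi_n(\{|w|\le\rho\})\to\{x\}$---is the standard route and is in the same spirit as the cited reference. One incidental simplification: in the Hurwitz step you may take $c=x$ directly, since $x\in\partial\Omega$ already lies in $\widehat{\mathbb{C}}\setminus\Omega$; there is no need to locate a nearby complementary point.
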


\noindent For the general proof for Riemann surfaces, see \cite[Thm 3.4]{mil2006}. 
\subsubsection{Convergence of marked points and hyperbolic metric}
The sequence of marked points $\{u_n\}_n$ plays an important role in the convergence of domains. For example, a sequence of domains can be convergent or divergent depending on $\{u_n\}_n$. Moreover, in case of convergence, $\{u_n\}_n$ identifies the limit domain. In the following two lemmas, we analyze the convergence of domains in terms of the marked points,  by using hyperbolic geometry. We work in the space $\mathcal{D}^*$, which is given by (\ref{D^*}). 
\begin{lem}\label{thmsamecomponent} Let $U$ and $U_n$ be hyperbolic disks  in $\mathcal{D}^*$, and suppose $(U_n,u_n)\stackrel{Car} \longrightarrow (U,u)$. Suppose also that there is a sequence $\{w_n\}_n\rightarrow w$ such that $w_n\in U_n$ and $d_{U_n}(u_n,w_n)\leq \delta<\infty$,  for large $n$. Then $(U_n,w_n)\stackrel{Car} \longrightarrow(U,w)$.
\end{lem}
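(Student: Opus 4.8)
The plan is to reduce the whole statement to a single point: showing that the limit $w$ of the new basepoints actually lies in the limit domain $U$. Once this is established, the fact that $U$ is again the correct limit domain for the re-pointed sequence $(U_n,w_n)$ is automatic, because the limit domain produced by Carath\'eodory convergence is determined by the domains $U_n$ together with the component singled out by the basepoint, and $w$ will turn out to lie in the very same component as $u$.

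First I would invoke the Carath\'eodory Theorem \ref{carthm}. Since $(U_n,u_n)\stackrel{Car}{\longrightarrow}(U,u)$ and all the domains are hyperbolic disks in $\mathcal{D}^*$, the normalized Riemann maps $\psi_n:\mathbb{D}\to U_n$ with $\psi_n(0)=u_n$ and $\psi_n'(0)>0$ converge locally uniformly on $\mathbb{D}$ to a univalent map $\psi:\mathbb{D}\to U$ with $\psi(0)=u$. Each $\psi_n$ is a biholomorphism, hence a hyperbolic isometry between $(\mathbb{D},d_{\mathbb{D}})$ and $(U_n,d_{U_n})$ by the Schwarz--Pick Lemma (Theorem \ref{schpick}). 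Writing $\zeta_n:=\psi_n^{-1}(w_n)$, the hypothesis gives $d_{\mathbb{D}}(0,\zeta_n)=d_{U_n}(u_n,w_n)\le\delta$ for large $n$, so the points $\zeta_n$ are trapped in the closed hyperbolic ball $\overline{B}=\{z\in\mathbb{D}:d_{\mathbb{D}}(0,z)\le\delta\}$, which is a compact subset of $\mathbb{D}$.

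This is where the uniform bound $\delta$ does its essential work. Passing to a subsequence, $\zeta_{n_j}\to\zeta^*\in\overline{B}\subset\mathbb{D}$; since $\psi_n\to\psi$ uniformly on $\overline{B}$ and $\psi$ is continuous, $w_{n_j}=\psi_{n_j}(\zeta_{n_j})\to\psi(\zeta^*)$. As $w_n\to w$ by assumption, we conclude $w=\psi(\zeta^*)\in\psi(\mathbb{D})=U$. Without the hyperbolic bound the $\zeta_n$ could accumulate on $\partial\mathbb{D}$ and $w$ could land on $\partial U$; ruling this out is the moving-domain analogue of Lemma \ref{lem3}, and it is the only real obstacle in the argument.

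Finally I would re-point using the complement characterization (Theorem \ref{carconalter}). For any subsequence along which $K_{n_k}=\widehat{\mathbb{C}}\setminus U_{n_k}$ converges in the Hausdorff metric to a compact set $K$, the hypothesis $(U_n,u_n)\stackrel{Car}{\longrightarrow}(U,u)$ tells us that $U$ is the connected component of $\widehat{\mathbb{C}}\setminus K$ containing $u$. These Hausdorff limits $K$ depend only on the domains $U_n$ and not on the marking, so exactly the same family of limit sets $K$ is relevant for the sequence $(U_n,w_n)$. Since $U$ is connected and $w\in U$, the component of $\widehat{\mathbb{C}}\setminus K$ containing $w$ is again $U$, for each such $K$. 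Combining this with $w_n\to w$, the converse direction of Theorem \ref{carconalter} yields $(U_n,w_n)\stackrel{Car}{\longrightarrow}(U,w)$, as desired.
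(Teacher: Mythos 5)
Your proposal is correct, and its first half coincides with the paper's argument: both invoke Theorem \ref{carthm} to get locally uniform convergence of the normalized Riemann maps $\psi_n\to\psi$, both use the Schwarz--Pick Lemma to identify $d_{\mathbb{D}}(0,\psi_n^{-1}(w_n))=d_{U_n}(u_n,w_n)\le\delta$, and both trap the preimages in a compact hyperbolic ball to extract a convergent subsequence and conclude $w=\psi(\zeta^*)\in U$. Where you diverge is in how you convert the membership $w\in U$ into the statement $(U_n,w_n)\stackrel{Car}\longrightarrow(U,w)$. The paper stays inside the Riemann-map framework: it shows the full sequence $\widehat{w}_n=\psi_n^{-1}(w_n)$ converges (using univalence of $\psi$ for uniqueness of the limit), composes each $\psi_n$ with an explicit M\"obius transformation $M_n$ taking $0$ to $\widehat{w}_n$ with the rotation chosen so that $(\psi_n\circ M_n)'(0)>0$, checks $M_n\to M$ locally uniformly, and then applies the converse direction of Theorem \ref{carthm} to the re-normalized maps $\psi_n\circ M_n\to\psi\circ M$. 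You instead switch to the complement characterization of Theorem \ref{carconalter}: since the Hausdorff limits $K$ of $\widehat{\mathbb{C}}\setminus U_{n_k}$ are independent of the marking, and $U$ is itself a component of $\widehat{\mathbb{C}}\setminus K$ containing the point $w$, the converse direction of that theorem re-points the convergence immediately. Your route is slightly more economical — you only need one convergent subsequence of the $\zeta_n$ to certify $w\in U$, and you avoid the bookkeeping of normalizing derivatives of the M\"obius factors — while the paper's route has the advantage of producing the explicit normalized uniformizations of $(U_n,w_n)$, which can be useful downstream. Both are complete proofs.
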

\begin{proof} 
 Let $\psi_n:\mathbb{D}\rightarrow U_n$ be univalent maps which satisfy $\psi_n(0)=u_n$, and $\psi_n'(0)>0$. Then the sequence $\{\psi_n\}_n$ converges locally uniformly to $\psi:\mathbb{D}\rightarrow U$ with $\psi(0)=u$ and $\psi'(0)>0$, by Theorem \ref{carthm}. Let $\widehat{w}_n\in\mathbb{D}$ be such that $\psi_n(\widehat{w}_n)=w_n$. We claim that for $\{\widehat{w}_n\}_n\rightarrow \widehat{w}$, $\psi(\widehat{w})=w$. Indeed, since $\psi_n$ are univalent, it preserves the hyperbolic distances, i.e., $d_{\mathbb{D}}(0,\widehat{w}_n)=d_{U_n}(u_n,w_n)\leq \delta$. Thus $\{\widehat{w}_n\}_n\subset\overline{\mathbb{D}(0,r)}$, where $\mathbb{D}(0,r)$ is the disk with Euclidian radius $r=\frac{e^{\delta}-1}{e^{\delta}+1}$, centered at $0$. Since $\overline{\mathbb{D}(0,r)}$ is compact, then there exists a convergent subsequence $\{\widehat{w}_{n_k}\}_k$, and say $\widehat{w}_{n_k}\rightarrow \widehat{w}$. On the other hand, $\psi_{n_k}\rightrightarrows\psi$ on compact sets of $\mathbb{D}$, and hence $\psi_{n_k}(\widehat{w}_{n_k})\rightarrow\psi(\widehat{w})=w$. Since $\psi$ is univalent, the limit $\widehat{w}$ is unique and is independent of the choice of the subsequence $\{\widehat{w}_{n_k}\}_k$, which means $\widehat{w}_n\rightarrow\widehat{w}$.\\
 \\
 Define the M\"{o}bius transformations $ M_n:\mathbb{D}\rightarrow\mathbb{D}$ as
\begin{equation*}
M(z):= e^{2\pi i\theta_n}\frac{z+\widehat{w}_n}{1+\overline{\widehat{w}_n}z},\;\;\;\;\theta_n=-Arg(\psi_n^{-1}(\widehat{w}_n))
\end{equation*} 
 Note that $(\psi_n\circ M_n)$ satisfies $(\psi_n\circ M_n)(0)=w_n$ and that $(\psi_n\circ M_n)'(0)>0$.  Set $M(z):= \frac{z+\widehat{w}}{1+\overline{\widehat{w}}z}$. Observe that as $\widehat{w}_n\rightarrow w$, $M_n\rightrightarrows M$ on compact subsets of $\mathbb{D}$. So the sequence $(\psi_n\circ M_n)_n$ converges uniformly to $(\psi\circ M)$ on compact sets of $\mathbb{D}$, with $(\psi\circ M)'(0)>0$. This implies Carath\'eodory convergence of the sequence of  pointed disks $\{(U_n,w_n)\}_n$ to $(U,w)$, by Theorem \ref{carthm}.
 \end{proof}

\begin{lem}\label{empty intersection} Let $U_n$, $Z$ and $W$ be hyperbolic sets in $\mathcal{D}^*$, and let $(U_n,w_n)\stackrel{Car} \longrightarrow(W,w)$ and  $(U_n,z_n)\stackrel{Car} \longrightarrow(Z,z)$. Suppose  $d_{U_n}(w_n,z_n)\rightarrow \infty$ as $n\rightarrow\infty$. Then $W\cap Z=\emptyset$.
\end{lem}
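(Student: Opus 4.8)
The plan is to argue by contradiction: assuming $W\cap Z\neq\emptyset$, I will produce a uniform bound on the hyperbolic distances $d_{U_n}(w_n,z_n)$, contradicting the hypothesis that they tend to infinity. The starting point is to apply Theorem \ref{carthm} to each of the two convergences separately. Since $(U_n,w_n)\stackrel{Car}{\longrightarrow}(W,w)$, the normalized Riemann maps $\psi_n:\mathbb{D}\rightarrow U_n$ with $\psi_n(0)=w_n$ and $\psi_n'(0)>0$ converge locally uniformly to a univalent map $\psi:\mathbb{D}\rightarrow W$ with $\psi(0)=w$; here I use, as in Lemma \ref{thmsamecomponent}, that the $U_n$ are (simply connected) pointed disks. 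Likewise, from $(U_n,z_n)\stackrel{Car}{\longrightarrow}(Z,z)$ I obtain normalized Riemann maps $\chi_n:\mathbb{D}\rightarrow U_n$ with $\chi_n(0)=z_n$ converging locally uniformly to a univalent $\chi:\mathbb{D}\rightarrow Z$ with $\chi(0)=z$. The essential property I will exploit is that each $\psi_n$ and $\chi_n$, being a biholomorphism onto $U_n$, is a hyperbolic isometry.

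Next I would pick a point $p\in W\cap Z$; since $W$ and $Z$ are open, $p$ is an interior point of both. Writing $p=\psi(\alpha)$ and $p=\chi(\beta)$ for suitable $\alpha,\beta\in\mathbb{D}$, I set $a_n:=\psi_n(\alpha)\in U_n$ and $b_n:=\chi_n(\beta)\in U_n$. Local uniform convergence gives $a_n\rightarrow p$ and $b_n\rightarrow p$, while the isometry property yields the $n$-independent bounds $d_{U_n}(w_n,a_n)=d_{\mathbb{D}}(0,\alpha)$ and $d_{U_n}(z_n,b_n)=d_{\mathbb{D}}(0,\beta)$.

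The heart of the argument, and the step I expect to be the main obstacle, is to bound $d_{U_n}(a_n,b_n)$ uniformly. Since $p$ lies in the interior of $W$, I choose a round disk $D$ centred at $p$ with $\overline{D}\subset W$; being compact, $\overline{D}$ is contained in $U_n$ for all large $n$ by condition (ii) of Definition \ref{definitioncar}. As $a_n,b_n\rightarrow p$, both points eventually lie in $D$, and the inclusion $D\subset U_n$ together with the Comparison Principle (Theorem \ref{compprin}) gives $d_{U_n}(a_n,b_n)\leq d_{D}(a_n,b_n)$, which tends to $0$ because $a_n,b_n\rightarrow p$ inside the fixed hyperbolic domain $D$. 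The triangle inequality then yields $d_{U_n}(w_n,z_n)\leq d_{\mathbb{D}}(0,\alpha)+d_{U_n}(a_n,b_n)+d_{\mathbb{D}}(0,\beta)$, a quantity bounded for large $n$. This contradicts $d_{U_n}(w_n,z_n)\rightarrow\infty$, and hence $W\cap Z=\emptyset$.
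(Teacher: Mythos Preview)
Your proof is correct. Both you and the paper argue by contradiction and finish with the same Comparison-Principle step, but the routes to that step differ. You invoke Theorem~\ref{carthm} and work with the normalized Riemann maps $\psi_n,\chi_n$; this forces the extra hypothesis (which you flag) that the $U_n$ are simply connected disks, whereas the lemma as stated only assumes hyperbolic domains in $\mathcal{D}^*$. The paper avoids Riemann maps entirely and uses only Definition~\ref{definitioncar}: given $v\in W\cap Z$, it chooses open connected sets $N_1$ and $N_2$, relatively compact in $W$ and $Z$ respectively, with $\{w,v\}\subset N_1$ and $\{v,z\}\subset N_2$. Condition~(ii) applied to each convergence gives $N_1\cup N_2\subset U_n$ for large $n$; since $N_1\cup N_2$ is open, connected, and contains both $w$ and $z$, condition~(iii) applied to each convergence forces $N_1\cup N_2\subset W\cap Z$, so in particular $w,z\in W\cap Z$. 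From there the contradiction with $d_{U_n}(w_n,z_n)\to\infty$ follows exactly as in your last paragraph. Your approach yields the clean explicit bound $d_{U_n}(w_n,a_n)=d_{\mathbb{D}}(0,\alpha)$ via the isometry, at the price of the simple-connectivity assumption; the paper's approach is shorter and works for arbitrary hyperbolic domains.
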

\begin{proof} Suppose $W\cap Z\neq \emptyset$. Take $v\in W\cap Z$.  Let $N_1$ be an open connected set, compactly contained in $W$, with $\{v,w\}\subset N_1$. Similarly let $N_2$ be an open connected set compactly contained in $Z$, with $\{z,v\}\subset N_2$. Then for sufficiently large $n$, $N_1\cup N_2\subset U_n$. The convergence in the sense of Carath\'eodory  implies $N_1\cup N_2\subset W\cap Z$, so $\{w,z\}\subset W\cap Z$. This contradicts that $d_{U_n}(w_n,z_n)\rightarrow\infty$ as $n\rightarrow\infty$.
\end{proof}

\section{Proof of the Main Theorem}
With the following proposition which is given in a very general setting, the Carath\'eodory topology meets with the dynamics.
\begin{prop}\label{lem4}
Let $f_n:\mathbb{C}\rightarrow\mathbb{C}$ be a sequence of nonlinear holomorphic functions converging locally uniformly to a nonlinear holomorphic function $f:\mathbb{C}\rightarrow\mathbb{C}$. Suppose $\{(U_n, u_n)\}_n$ is a sequence of simply connected pointed Fatou components of $f_n$ such that $(U_n,u_n)\stackrel{Car} \longrightarrow(U,u)$. Then $U$ is a simply connected subdomain of a Fatou component for $f$.
\end{prop}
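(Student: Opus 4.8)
The plan is to show that the limit domain $U$ lies inside a single Fatou component of $f$, exploiting that Fatou membership is an open, dynamically stable condition and that local uniform convergence $f_n\to f$ forces the iterate sequences to match in the limit. The key point is the equicontinuity definition of the Fatou set: a point lies in $\mathcal{F}(f)$ precisely when the family $\{f^k\}_k$ is normal (equicontinuous on a neighborhood). So the goal reduces to verifying that the iterate family $\{f^k\}_k$ is normal on a neighborhood of every point of $U$, using that the corresponding families $\{f_n^k\}_k$ are normal on the $U_n$.

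First I would fix an arbitrary point $z_0\in U$ and choose, by the definition of Carath\'eodory convergence (Definition \ref{definitioncar}, part ii), a compact connected neighborhood $C$ of $z_0$ together with the marked point $u$ contained in $U$; then $C\subset U_n$ for all sufficiently large $n$. Since each $U_n$ is contained in a Fatou component of $f_n$, the family $\{f_n^k\}_{k}$ is normal on $C$ (in fact on $U_n$). The heart of the argument is then to upgrade this uniform-in-$k$ normality of each fixed map $f_n$ into normality of the limiting family $\{f^k\}_k$ on $C$. The natural route is to control, for each fixed iterate count $k$, the difference $f^k - f_n^k$ on $C$: because $f_n\to f$ locally uniformly and all maps involved are holomorphic, a telescoping estimate
\begin{equation*}
f^k - f_n^k = \sum_{j=0}^{k-1} f^{\,k-1-j}\circ f \circ f_n^{\,j} - f^{\,k-1-j}\circ f_n \circ f_n^{\,j}
\end{equation*}
shows that on any compact set on which all the relevant orbits stay bounded, $f_n^k\to f^k$ uniformly for each fixed $k$. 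This local-uniform convergence of iterates is the technical engine.

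Next I would pass from convergence of individual iterates to normality of the limit family. I would argue by contradiction: if $\{f^k\}_k$ failed to be normal at $z_0$, then $z_0\in\mathcal{J}(f)$, and one can extract a subsequence $f^{k_i}$ with wild behavior on every neighborhood of $z_0$. One then shows this wildness must already be visible in $\{f_n^{k}\}$ for large $n$, contradicting the normality of $\{f_n^{k}\}_k$ on $C\subset U_n$. Concretely, one transfers a normal-family bound (say a uniform bound on spherical derivatives, via Marty's criterion) from the $f_n^{k}$ to the $f^{k}$ using the uniform convergence of iterates established above, so that $\{f^k\}_k$ inherits the Marty bound on a neighborhood of $z_0$ and is therefore normal there. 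Hence $z_0\in\mathcal{F}(f)$. Since $z_0\in U$ was arbitrary, $U\subset\mathcal{F}(f)$, and as $U$ is connected it is contained in a single Fatou component of $f$. Simple connectedness of $U$ is immediate from Remark \ref{limitdomainsimplyconnected}.

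The main obstacle I anticipate is the noncompactness coming from the transcendental setting: the telescoping estimate needs all the intermediate orbits $\{f_n^{j}(C)\}$ and $\{f^{j}(C)\}$ to remain in a common compact set where the convergence $f_n\to f$ is uniform, but for a transcendental $f$ nearby orbits can escape to infinity or approach an essential singularity, so these orbits are not a priori confined. The careful part of the argument is therefore to justify this confinement for each fixed $k$ on a possibly shrunk neighborhood of $z_0$ — using that $z_0$ lies in the interior $U$, that $C$ is compact, and that normality of $\{f_n^k\}_k$ on $C$ already precludes the $f_n^k(C)$ from spreading uncontrollably — and to ensure the estimates are uniform in $n$ for each fixed $k$ before letting $k$ vary. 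This uniformity-of-confinement step, rather than the formal normality argument, is where the real care is needed.
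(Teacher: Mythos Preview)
Your approach has a genuine gap that the paper's proof avoids. The crux is the passage from ``$\{f_n^k\}_k$ is normal on $C$ for each large $n$'' to ``$\{f^k\}_k$ is normal on $C$.'' Even granting that $f_n^k\to f^k$ locally uniformly on $C$ for each fixed $k$, the Marty bound $M_n$ you extract for the family $\{f_n^k\}_k$ depends on $n$, and nothing in your argument prevents $M_n\to\infty$. Abstractly this step simply fails: one can have, for each $n$, a normal family $\{g_{n,k}\}_k$ with $g_{n,k}\to g_k$ as $n\to\infty$ for every fixed $k$, while $\{g_k\}_k$ is not normal (take $g_k(z)=kz$ and let $g_{n,k}=g_{\min(k,n)}$). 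In the dynamical setting there is additional structure, but you do not invoke it, and the orbit-confinement issue you flag (iterates escaping to $\infty$ in Baker domains or near the essential singularity) makes the telescoping estimate delicate precisely where you need it.

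The paper's proof bypasses all of this by arguing on the Julia side rather than the Fatou side. If $U$ met $\mathcal{J}(f)$, then $U$ would contain a repelling periodic point $w$ (since these are dense in $\mathcal{J}(f)$). Rouch\'e's Theorem and continuity of the multiplier produce nearby repelling periodic points $w_n$ for $f_n$ with $w_n\to w$; but then a small compact neighborhood of $w$ in $U$ lies in $U_n$ for large $n$ (Carath\'eodory convergence), forcing $w_n\in U_n\subset\mathcal{F}(f_n)$, a contradiction. This is essentially the lower semicontinuity of the Julia set, and it needs only the robustness of a single repelling orbit under perturbation, not any uniform control over the full iterate family. Simple connectedness then comes from Remark~\ref{limitdomainsimplyconnected}, as you noted.
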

\noindent This is a consequence of the fact that the Julia set depends lowersemicontinuously on the parameter (see \cite{dou1994}). 

\begin{proof} 
We prove by contradiction. Suppose $U$ is not contained in a Fatou domain for $f$. In this case $U\cap \mathcal{J}(f)\neq \emptyset$. So $U$ contains a repelling periodic point $w$ of some period $k$.  We are going to show that there is a repelling $k$-periodic point, say $w_n$ for $f_n$ near $w$. Choose $r>0$ small enough that in $\mathbb{D}(w,r)$, $w$ is the only $k$-periodic point in $\mathbb{D}(w,r)$. Set $\epsilon:=\inf_{z\in\overline{\mathbb{D}(w,r)}}|f(z)-z|$.
Since $f_{n}$ converges uniformly to $f$ on $\overline{\mathbb{D}(w,r)}$, there exists $N\in\mathbb{N}$ such that for all $z\in\overline{\mathbb{D}(w,r)}$ and for all $n\geq N$, $|f^k_{n}(z)-f^k(z)|\leq \epsilon$. Then by Rouch\'e's Theorem, there is exactly one $k$-periodic point, say $w_n$, for $f_{n}$ in $\mathbb{D}(w,r)$. Since the multiplier map is continuous,  $|(f^k)'(w)|>1$ implies  $|(f_n^k)'(w_n))|>1$, for $n$ sufficiently large, i.e., $w_n$ is a repelling $k$-periodic point for $f_n$. \\
\\
Let $N\subset U$ be an open, connected, relatively compact subset of $U$ containing $w$. For sufficiently large $n$, $N\subset U_n$. Since $w_n\rightarrow w$, increasing $n$ if necessary, $\{w_n,w\}\subset N\subset U_n$, which is a contradiction to the fact that $U_n$ is a Fatou domain. Thus, $U$ is contained in a Fatou component for $f$. Moreover, by Remark \ref{limitdomainsimplyconnected}, $U$ is simply connected. 
\end{proof}
\noindent In some cases, we may obtain the whole Fatou component in the limit, as shown in the following lemma. We now turn our attention to the family $f_a$ in consideration. Recall that $\mathcal{C}^0$ denotes the main hyperbolic component, and $A_a^0$ denotes the immediate basin of the superattracting fixed point $z=0$ for $f_a$.
\begin{lem}\label{abovelemma}
Let $\{a_n\}_{n}\subset {\mathcal{C}^0}$ be such that $a_n\rightarrow a\in\partial {\mathcal{C}^0}$. Then
 $(A_{a_n}^0,0)\stackrel{Car} \longrightarrow(A_a^0,0)$.
\end{lem}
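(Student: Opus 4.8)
The marked point is $0$ for every parameter, so condition (i) of Definition~\ref{definitioncar} holds automatically. The plan is to show that every Carath\'eodory subsequential limit of $\{(A_{a_n}^0,0)\}_n$ equals $(A_a^0,0)$; since the marked points converge and Carath\'eodory convergence is governed by Hausdorff convergence of complements (Theorem~\ref{carconalter}), this forces the whole sequence to converge to $(A_a^0,0)$. So I would first fix an arbitrary subsequence and, using compactness of the space of compact subsets of $\widehat{\mathbb{C}}$ under the Hausdorff $\rho$-metric, extract a sub-subsequence with $K_{n_k}:=\widehat{\mathbb{C}}\setminus A_{a_{n_k}}^0\to K$. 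Letting $U$ be the component of $\widehat{\mathbb{C}}\setminus K$ containing $0$, Theorem~\ref{carconalter} gives $(A_{a_{n_k}}^0,0)\stackrel{Car}{\longrightarrow}(U,0)$, and it then suffices to prove $U=A_a^0$.

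For the inclusion $U\subseteq A_a^0$, I would invoke Proposition~\ref{lem4}. Since $f_{a_n}\to f_a$ locally uniformly and each $A_{a_n}^0$ is a simply connected Fatou component, the proposition shows that $U$ is a simply connected subdomain of some Fatou component of $f_a$; as $0\in U$ is the superattracting fixed point of $f_a$, that component is exactly $A_a^0$, so $U\subseteq A_a^0$.

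The harder direction is $A_a^0\subseteq U$, and here the essential input is Proposition~\ref{totallyinvariantcomponent}, namely that for each $a_n\in\mathcal{C}^0$ the basin $A_{a_n}=A_{a_n}^0$ is totally invariant. First I would fix a small closed disk $\overline{\mathbb{D}(0,\rho)}\subset\Omega_a$ with $f_a(\overline{\mathbb{D}(0,\rho)})\subset\mathbb{D}(0,\rho)$; by local uniform convergence $f_{a_{n_k}}(\overline{\mathbb{D}(0,\rho)})\subset\mathbb{D}(0,\rho)$ for large $k$, so this disk is forward invariant and hence lies in $A_{a_{n_k}}^0$. Next, given $z_0\in A_a^0$, choose an open connected relatively compact $N\subset A_a^0$ with $\{0,z_0\}\subset N$. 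Since the iterates converge to $0$ uniformly on the compact set $\overline{N}$, there is a single $M$ with $f_a^{M}(\overline{N})\subset\mathbb{D}(0,\rho/2)$, whence $f_{a_{n_k}}^{M}(\overline{N})\subset\mathbb{D}(0,\rho)\subset A_{a_{n_k}}^0$ for large $k$. Because $A_{a_{n_k}}^0$ is totally invariant, the whole basin is a single component, so $\overline{N}\subset A_{a_{n_k}}^0$; condition (iii) of Definition~\ref{definitioncar} then gives $N\subset U$, and in particular $z_0\in U$.

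Combining the two inclusions yields $U=A_a^0$ for every subsequential limit, and the lemma follows. I expect the main obstacle to be this reverse inclusion: obtaining the uniform iterate count $M$ on $\overline{N}$, and, above all, using total invariance of $A_{a_n}^0$ to ensure that pulling $f_{a_{n_k}}^{M}(\overline{N})$ back into the basin keeps it in the \emph{immediate} basin of $0$ rather than scattering it among other preimage components. It is precisely the hypothesis $a_n\in\mathcal{C}^0$, via Proposition~\ref{totallyinvariantcomponent}, that rules this out.
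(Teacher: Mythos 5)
Your proof is correct, but for the essential inclusion $A_a^0\subseteq U$ you take a genuinely different route from the paper. The paper argues through the B\"ottcher uniformization: it normalizes $(\psi_{a_n}\circ\phi_{a_n})^{-1}:\mathbb{D}\rightarrow\Omega_{a_n}$, uses continuity of the Green's function in the parameter to get $r(a_n)\rightarrow 1$, concludes that these Riemann maps converge to $\phi_a^{-1}$ (defined on all of $\mathbb{D}$ because for $a\in\partial\mathcal{C}^0$ the asymptotic value has left the basin), and then applies Theorem \ref{carthm} to get $(\Omega_{a_n},0)\stackrel{Car}\longrightarrow(A_a^0,0)$, whence $A_a^0\subset W_a$ since $\Omega_{a_n}\subset A_{a_n}^0$. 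You instead give the standard ``lower semicontinuity of attracting basins'' argument: a forward-invariant disk $\mathbb{D}(0,\rho)$ persisting under perturbation, uniform attraction to $0$ on the compact set $\overline{N}\subset A_a^0$ to get a single iterate count $M$, and then total invariance of $A_{a_{n_k}}^0$ to pull $\overline{N}$ back into the immediate basin. (In fact total invariance is not even needed here: $\overline{N}$ is connected and contains $0$, and the component of $f_{a_{n_k}}^{-M}(A_{a_{n_k}}^0)$ containing $0$ is $A_{a_{n_k}}^0$ itself; but your use of Proposition \ref{totallyinvariantcomponent} is equally valid.) Your approach is more elementary and more portable --- it needs only local uniform convergence of the maps and a persistent attracting fixed point, not the explicit conformal conjugacy or the continuity of the Green's function --- while the paper's approach buys the additional information that the uniformized subdomains $\Omega_{a_n}$ themselves converge to $A_a^0$, which fits the machinery used later. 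One small ordering issue: you define $U$ as the component of $\widehat{\mathbb{C}}\backslash K$ containing $0$ before you have ruled out $0\in K$; the uniform lower bound $d_E(0,\partial A_{a_{n_k}}^0)\geq\rho$ supplied by your invariant disk (or Proposition \ref{precompactness1}, as the paper invokes) should be stated first so that this component exists.
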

\begin{proof}
Obviously for $a_n\in\mathcal{C}^0$, there exists $1<L<\infty$ such that $1/L\leq d_E(0,\partial A_{a_n}^0)\leq L$. Hence $\{(A_{a_n}^0,0)\}_n$ is precompact by Proposition \ref{precompactness1}. After passing to a subsequence if necessary, let $W_a$ be such that  $(A_{a_n}^0,0)\stackrel{Car} \longrightarrow(W_a,0)$. Then by Proposition \ref{lem4}, $W_a$ is a subdomain of a Fatou component, and moreover since $0\in W_a$, then $W_a\subset A_a^0$.
Let $\Omega_{a_n}$ be the maximal domain in $A_{a_n}^0$, where the restriction of the B\"{o}ttcher coordinate $\phi_{a_n}|_{\Omega_{a_n}}:\Omega_{a_n}\rightarrow\mathbb{D}_{r(a_n)}$ is biholomorphic. Define $\displaystyle{\psi_{a_n}:=\frac{z}{r(a_n)}}$. Then $(\psi_{a_n}\circ\phi_{a_n})(\Omega_{a_n})=\mathbb{D}$. We take $(\psi_{a_n}\circ\phi_{a_n})^{-1}:\mathbb{D}\rightarrow \Omega_{a_n}$ as the sequence of univalent maps associated to the sequence of pointed disks $\{(\Omega_{a_n},0)\}_{n}$. Since the Green's function is continuous with respect to the parameter,  as $a_n\rightarrow a\in\partial {\mathcal{C}^0}$,  $g_{a_n}(\infty)\rightarrow 0$, and hence $r(a_n)\rightarrow 1$. This means $ (\psi_{a_n}\circ\phi_{a_n})^{-1}\rightrightarrows (\phi_a)^{-1}$ on compact sets of $\mathbb{D}$, where $\phi_a$ is the B\"{o}ttcher map for $a\in\partial {\mathcal{C}^0}$. This implies $(\Omega_{a_n},0)\stackrel{Car} \longrightarrow(U_a,0)$ for some subdomain $U_a$ of a Fatou domain as in Proposition \ref{lem4}. Moreover, since $(\phi_a)^{-1}$ is defined everywhere in $\mathbb{D}$, then $U_a=A_a^0$. On the other hand $U_a\subset W_a$. Therefore we obtain $W_a=A_a^0$. 
\end{proof}
\begin{rem}
Convergence in Lemma \ref{abovelemma} is independent of how the sequence $\{a_n\}_{n}$ converges to the boundary point $a\in\partial {\mathcal{C}}^0$.
\end{rem}

\noindent Recall that $d_{A_a^0}(.,.)$ denote the hyperbolic distance in $A_a^0$. The following proposition relates the potentials of two points on a dynamic ray and the hyperbolic distance $d_{A_a^0}(.,.)$ between them.
\begin{prop}\label{comparablepotentials}
For any $c>1$ and $x,y\in R_{A_a^0}(\theta)$, $\theta\in \mathbb{R}\slash\mathbb{Z}$, if
\begin{equation*}
cg_a(x)\leq g_a(y)\leq g_a(x)\leq g_a(a)
\end{equation*} 
holds, then $d_{A_a^0}(x,y)<\log(2c)$.
\end{prop}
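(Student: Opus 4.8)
The plan is to transport the whole question, through the B\"ottcher coordinate, into the model disc $\mathbb{D}_{r(a)}$, to evaluate the relevant hyperbolic distance there explicitly, and then to reduce the assertion to an elementary inequality for $\tanh$. Throughout I write $\rho:=r(a)$ and use from Section~\ref{resultsf_a} that $\phi_a\colon\Omega_a\to\mathbb{D}_\rho$ is biholomorphic with $g_a=\log|\phi_a|$ on $\Omega_a$, that $\rho<1$ (since $\rho=|\Phi(a)|$ and $\Phi(a)\in\mathbb{D}$), and that $g_a(a)=2\log\rho$ (because $a\in\partial f_a(\Omega_a)$ while $\phi_a\bigl(f_a(\Omega_a)\bigr)=\mathbb{D}_{\rho^2}$, the image of $\Omega_a$ under $w\mapsto w^2$).

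First I would check that both points lie in the B\"ottcher domain. The hypothesis gives $g_a(x)\le g_a(a)=2\log\rho<\log\rho$, so $|\phi_a(x)|=e^{g_a(x)}\le\rho^2<\rho$, and likewise $|\phi_a(y)|\le e^{g_a(y)}\le\rho^2$; hence $x,y\in\Omega_a$. Since $x,y\in R_{A_a^0}(\theta)$, their images lie on the common radius $\{s\,e^{2\pi i\theta}\}$ of $\mathbb{D}_\rho$, with moduli $e^{g_a(x)}\ge e^{g_a(y)}$. Because $\Omega_a\subset A_a^0$ and $\phi_a$ is a biholomorphism, the Comparison Principle (Theorem~\ref{compprin}) together with conformal invariance of the hyperbolic metric yields
\[
d_{A_a^0}(x,y)\ \le\ d_{\Omega_a}(x,y)\ =\ d_{\mathbb{D}_\rho}\bigl(\phi_a(x),\phi_a(y)\bigr).
\]
As the two images are radial, the connecting geodesic is the radius and the distance integrates explicitly. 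Setting $p:=\log\rho-g_a(x)$ and $q:=\log\rho-g_a(y)$ (both positive, with $p\le q$ because $g_a(y)\le g_a(x)$) and using $\frac{\rho+e^{g}}{\rho-e^{g}}=\coth\frac{\log\rho-g}{2}$, a short computation gives
\[
d_{\mathbb{D}_\rho}\bigl(\phi_a(x),\phi_a(y)\bigr)\ =\ \log\coth(p/2)-\log\coth(q/2)\ =\ \log\frac{\tanh(q/2)}{\tanh(p/2)}.
\]

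It then remains to bound this logarithm by $\log(2c)$, and this is where the two remaining ingredients enter. The elementary input is that $\tanh$ is concave on $(0,\infty)$ with $\tanh(0)=0$, so $s\mapsto\tanh(s)/s$ is decreasing; since $0<p\le q$ this gives $\tanh(q/2)/\tanh(p/2)\le q/p$. The genuinely load-bearing step is the bound $q\le(2c-1)p$. Unwinding the definitions and using $g_a(y)\ge c\,g_a(x)$, this inequality is equivalent to $(c-1)\bigl(2\log\rho-g_a(x)\bigr)\ge 0$, which holds precisely because $c>1$ and $g_a(x)\le g_a(a)=2\log\rho$. Combining the three estimates gives $d_{A_a^0}(x,y)\le\log(2c-1)<\log(2c)$, as required (and in fact the sharper bound $\log(2c-1)$, which is attained in the limit as $\rho\to1$ with $x,y$ at the extreme allowed potentials).

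I expect the only real subtlety to be the bookkeeping that converts the chain $c\,g_a(x)\le g_a(y)\le g_a(x)\le g_a(a)$ into the two clean statements $p\le q$ and $q\le(2c-1)p$; both signs matter, since all potentials are negative. Everything else is either cited (the Comparison Principle, the conformality of $\phi_a$, and the identity $g_a(a)=2\log\rho$) or an elementary monotonicity property of $\tanh$, so once the reduction to $\log\bigl(\tanh(q/2)/\tanh(p/2)\bigr)$ is in place the estimate is essentially forced.
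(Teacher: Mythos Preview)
Your proof is correct and in fact slightly sharper than the paper's: you obtain $d_{A_a^0}(x,y)\le\log(2c-1)<\log(2c)$. The two arguments share the same skeleton---Comparison Principle to pass from $A_a^0$ to the B\"ottcher domain, then explicit hyperbolic distance in the model---but diverge in the choice of model. The paper removes the origin and lifts $\Omega_a^*=\Omega_a\setminus\{0\}$ via $\exp$ to a left half-plane $\mathcal{H}=\{\Re z<g_a(\infty)\}$; there the distance between $\hat x,\hat y$ on a common horizontal line is simply $\log\frac{g_a(\infty)-g_a(y)}{g_a(\infty)-g_a(x)}$, and the hypotheses $g_a(x)\le g_a(a)=2g_a(\infty)$ and $g_a(y)\ge c\,g_a(x)$ are fed in by two crude monotone replacements, yielding $\log(2c)$. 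You instead keep the origin and work directly in $\mathbb{D}_\rho$, compute the radial distance as $\log\bigl(\tanh(q/2)/\tanh(p/2)\bigr)$ with $p=\log\rho-g_a(x)$, $q=\log\rho-g_a(y)$, and then use concavity of $\tanh$ to reduce to $q/p\le 2c-1$. Your route costs a short $\tanh$ computation but produces the tighter constant; the paper's half-plane detour trades the constant for a one-line distance formula. Both are perfectly valid, and the verification that $q\le(2c-1)p$ unwinds to $(c-1)\bigl(g_a(a)-g_a(x)\bigr)\ge 0$ is exactly the content of the paper's ``substituting $g_a(\infty)=\tfrac12 g_a(a)$'' step, so the load-bearing use of the hypothesis $g_a(x)\le g_a(a)$ is identical.
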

\begin{proof}
Suppose $\Omega_a$ as in the proof of Lemma  \ref{abovelemma}, and let $\Omega_a^*:=\Omega_a\backslash\{0\}$. By assumption, $x,y\in\Omega_a^*$.  Set $\mathcal{H}:=\{z:\Re z<g_a(\infty)\}$. Then $\exp:\mathcal{H}\rightarrow \phi_a(\Omega_a^*)$ is a universal cover. Let $\widehat{x},\widehat{y}\in \mathcal{H}$ be such that $\exp(\widehat{x})=\phi_a(x)$ and $\exp(\widehat{y})=\phi_a(y)$ with $\Im\widehat{x}=\Im\widehat{y}=2\pi\theta$. Since $\exp$ is a local hyperbolic isometry, and $\phi_a$ is conformal, the hyperbolic distance is preserved, i.e., $d_{\mathcal{H}}(\widehat{x},\widehat{y})=d_{\Omega_a^*}(x,y)$ (see Theorem \ref{schpick}). Moreover, since $\Omega_a^*\subset A_a^0$,  $d_{A_a^0}(x,y)< d_{\Omega_a^*}(x,y)$, by the Comparison Principle (see Theorem \ref{compprin}). Then we have
\begin{eqnarray*}
d_{A_a^0}(x,y)<d_{\mathcal{H}}(\widehat{x},\widehat{y})&=&\log\frac{\Re\widehat{y}-g_a(\infty)}{\Re\widehat{x}-g_a(\infty)}\\
&\leq&\log\frac{\Re\widehat{y}}{\Re\widehat{x}-g_a(\infty)}\leq\log\frac{2\Re\widehat{y}}{\Re\widehat{{x}}}\leq \log(2 c),
\end{eqnarray*}
substituting $\min \Re\widehat{x}=g_a(a)$, and $g_a(\infty)=\frac{1}{2}g_a(a)$.
\end{proof}
\noindent Before giving a corollary of this proposition, we need the following definition.
\begin{defn}
Let $\theta\in\mathbb{Q}\slash\mathbb{Z}$ be such that $2^q\theta\equiv \theta\pmod{1}$ for minimal $q\in\mathbb{N}$. Given $x\in R_{A_a^0}(\theta)$, \textit{the fundamental segment} with endpoints $x$ and $f_a^q(x)$ is the ray segment of $R_{A_a^0}(\theta)$ with the same endpoints, and we denote it by $R_{A_a^0}[x,f_a^q(x)]$.
\end{defn}
\begin{cor}\label{lemmain}
For $a\in R_{\mathcal{C}^0}(\theta)$, $\theta\in\mathbb{Q}\slash\mathbb{Z}$, such that $2^{l+q}\theta\equiv 2^{l}\theta \pmod{1}$ for some minimal integers $l\geq 0$, $q\geq 1$, observe that $f_a^{l}(a),f_a^{l+q}(a)\in R_{A_{a}^0}(2^{l+q}\theta)$. 
Proposition \ref{comparablepotentials} implies that the hyperbolic length of the fundamental segment $R_{A_a^0}[f_a^l(a),f_a^{l+q}(a)]\subset R_{A_a^0}(2^{l+q}\theta)$ is bounded with respect to the hyperbolic metric in $A_a^0$. Indeed, it verifies the hypothesis of Proposition \ref{comparablepotentials}, assigning $x=f_a^{l}(a)$ and $y=f_a^{l+q}(a)$, with $c=2^q$.
\end{cor}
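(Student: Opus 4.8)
The plan is to treat this as a pure verification: Proposition \ref{comparablepotentials} already contains all the analytic work, so the only thing to supply is that the two endpoints $x:=f_a^l(a)$ and $y:=f_a^{l+q}(a)$ lie on one and the same internal dynamic ray and that their potentials satisfy the chain $c\,g_a(x)\le g_a(y)\le g_a(x)\le g_a(a)$ with $c=2^q$. First I would locate the points. Since $a\in R_{\mathcal{C}^0}(\theta)$, the parametrization $\Phi(a)=\phi_a(\infty)=-\phi_a(1)$ from Theorem \ref{parametrizationofthemainhyperboliccomponent}, together with the identity $\phi_a(a)=\phi_a(1)^2$ coming from $f_a(1)=a$ and the B\"ottcher conjugacy $\phi_a(f_a(z))=\phi_a(z)^2$, pins down the dynamic ray on which the asymptotic value sits. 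The conjugacy doubles ray arguments under $f_a$, so $f_a^k(a)$ lies on the ray whose argument is $2^k$ times that of $a$; combined with the periodicity relation $2^{l+q}\theta\equiv 2^l\theta\pmod1$ this places both iterates on the common ray $R_{A_a^0}(2^{l+q}\theta)$ of the statement.

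Next I would compute the potentials using the defining functional equation $g_a(f_a(z))=2\,g_a(z)$ on $A_a^0$, which gives $g_a(x)=2^l g_a(a)$ and $g_a(y)=2^{l+q}g_a(a)$, hence $g_a(y)=2^q g_a(x)$, i.e. $c\,g_a(x)=g_a(y)$ exactly. The one point that is easy to mis-sign, and which I would emphasize, is that $g_a<0$ throughout $A_a^0$, because $\phi_a$ maps into $\mathbb{D}_{r(a)}$ with $r(a)\le 1$ so that $g_a=\log|\phi_a|<0$. Multiplying the negative number $g_a(a)$ by the factors $2^l\ge1$ and $2^{l+q}>2^l$ therefore reverses the apparent direction of the inequalities and yields precisely $c\,g_a(x)=g_a(y)\le g_a(x)\le g_a(a)<0$, which is the hypothesis of Proposition \ref{comparablepotentials}, with the first relation being an equality (hence in particular $\le$). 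Note $c=2^q>1$ since $q\ge1$.

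Before invoking the proposition I would check that the entire fundamental segment lies in the maximal B\"ottcher domain $\Omega_a=\{g_a<g_a(\infty)\}$, where the proof of Proposition \ref{comparablepotentials} is carried out: since $g_a(\infty)=\tfrac12 g_a(a)$, every point of $R_{A_a^0}[x,y]$ has potential between $2^l g_a(a)$ and $2^{l+q}g_a(a)$, all strictly below $\tfrac12 g_a(a)$ because $2^l\ge1>\tfrac12$ and $g_a(a)<0$; thus the segment sits in $\Omega_a^*$ and is the conformal, hence isometric, image of a horizontal geodesic segment of the half-plane $\mathcal{H}$. The proposition then bounds its hyperbolic length in $A_a^0$ by $\log(2c)=(q+1)\log 2$, a bound depending only on $q$ and uniform in $a$. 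The main obstacle here is not analytic but combinatorial: getting the argument-doubling bookkeeping of the first step exactly right, in particular accounting for the shift between the parameter-ray argument $\theta$ and the dynamic-ray argument of the asymptotic value $a$ induced by $\Phi(a)=-\phi_a(1)$.
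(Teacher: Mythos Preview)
Your proposal is correct and follows exactly the approach indicated in the paper: the corollary is stated as a direct verification of the hypotheses of Proposition~\ref{comparablepotentials} with $x=f_a^{l}(a)$, $y=f_a^{l+q}(a)$, $c=2^q$, and you supply precisely that verification (same-ray membership via argument doubling, potential relations via $g_a\circ f_a=2g_a$, and the sign observation $g_a<0$ to orient the inequalities). Your additional remarks---that the segment lies in $\Omega_a^*$ and is the conformal image of a horizontal geodesic in $\mathcal{H}$, so that the distance bound is in fact a length bound, and your caution about the $\theta\mapsto 2\theta$ shift between parameter and dynamic arguments---are accurate refinements of what the paper leaves implicit.
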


\noindent For $a\in R_{\mathcal{C}^0}(\theta)$, $\theta\in\mathbb{Q}\slash\mathbb{Z}$, such that $2^{l+q}\theta\equiv 2^{l}\theta \pmod{1}$, we divide the proof of the Main Theorem into three cases; when $l=0$, $l=1$ and $l>1$, given in the following three sections. By the (pre-)periodic case, we mean the case when the asymptotic value is on a (pre-)periodic dynamic ray.
\subsection{Periodic case-I (when $l=0$)}
\begin{lem}\label{inftyaccumulationpoint} Let $\theta$ be such that $2^q\theta\equiv\theta\pmod{1}$ for some minimal $q\in\mathbb{N}$. Let  $a_n\in R_{\mathcal{C}^0}(\theta)$, and suppose that $a_n\rightarrow a\in\partial \mathcal{C}^0$. For each $0\leq m\leq q-1$, the sequence of pointed domains $\{(A_{a_n}^0,f_{a_n}^{m}(a_n))\}_{n}$ is precompact in the sense of Carath\'eodory.
\end{lem}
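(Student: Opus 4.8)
\noindent The plan is to verify the two conditions that Proposition \ref{precompactness1} requires for precompactness in $\mathcal{D}^*$: that the marked points $\{f_{a_n}^m(a_n)\}_n$ are bounded, and that there is $1<L<\infty$ with $1/L\le d_E(f_{a_n}^m(a_n),\partial A_{a_n}^0)\le L$ for all $n$. Boundedness is the routine part: since $\mathcal{C}^0$ is bounded (Theorem \ref{mainhyperboliccomponentisbounded}) the sequence $\{a_n\}_n$ is bounded, and as $f_{a_n}\to f_a$ locally uniformly with $a_n\to a$, for each fixed $m$ we get $f_{a_n}^m(a_n)\to f_a^m(a)$, so the marked points lie in a fixed compact set. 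Everything then reduces to the two-sided Euclidean estimate.

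\noindent First I would collect the available geometric information. Lemma \ref{abovelemma} gives $(A_{a_n}^0,0)\stackrel{Car}{\longrightarrow}(A_a^0,0)$, so Proposition \ref{precompactness1} applied at the base point $0$ already yields a uniform two-sided bound on $d_E(0,\partial A_{a_n}^0)$. Because $a_n\in R_{\mathcal{C}^0}(\theta)$ places $f_{a_n}^m(a_n)$ on the ray $R_{A_{a_n}^0}(2^{m+1}\theta)$ at potential $2^{m+1}\log r(a_n)$, which tends to $0$ since $r(a_n)\to1$, every marked point approaches $\partial A_{a_n}^0$ in potential. The upper bound $d_E\le L$ is then the easy direction: the boundaries $\partial A_{a_n}^0$ are Cantor bouquets whose hairs, being roughly $2\pi i$-periodic (compare the proof of Proposition \ref{totallyinvariantcomponent}), come within bounded distance of any fixed bounded set, so the marked points stay within a uniform distance of $\partial A_{a_n}^0$. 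I would also invoke Corollary \ref{lemmain}: the points $f_{a_n}^m(a_n)$ and $f_{a_n}^{m+q}(a_n)$ lie on the same ray with potentials in ratio $2^q$, so Proposition \ref{comparablepotentials} bounds their hyperbolic distance in $A_{a_n}^0$ uniformly in $n$; this control lets me pass any Euclidean lower bound between a marked point and its same-ray iterates via Lemma \ref{thmsamecomponent}.

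\noindent The crux, and the step I expect to be genuinely hard, is the lower bound $d_E(f_{a_n}^m(a_n),\partial A_{a_n}^0)\ge 1/L$. Note that $d_{A_{a_n}^0}(0,f_{a_n}^m(a_n))\to\infty$ necessarily: were it bounded, Lemma \ref{thmsamecomponent} would force $(A_{a_n}^0,f_{a_n}^m(a_n))\stackrel{Car}{\longrightarrow}(A_a^0,f_a^m(a))$ and hence $f_a^m(a)\in A_a^0$, contradicting $a\in\partial\mathcal{C}^0$. Thus the marked points are hyperbolically far from $0$ yet must stay Euclidean-bounded away from $\partial A_{a_n}^0$; geometrically this means the piece of $A_{a_n}^0$ carrying the singular orbit pinches off, in the limit, into a Fatou domain disjoint from $A_a^0$ (a separation made rigorous afterwards by Lemma \ref{empty intersection} together with Proposition \ref{lem4}). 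I would argue the lower bound by contradiction: if $d_E(f_{a_{n_k}}^m(a_{n_k}),\partial A_{a_{n_k}}^0)\to0$ along a subsequence, then passing to a Hausdorff limit $K$ of the complements $\widehat{\mathbb{C}}\backslash A_{a_{n_k}}^0$ puts $f_a^m(a)$ into $K$; the goal is to contradict this by exhibiting a fixed neighbourhood of $f_a^m(a)$ that lies in $A_{a_n}^0$ for all large $n$. Establishing this non-collapse is the heart of the argument, and it is precisely here that one must use the specific dynamics of the singular orbit as $a_n\to\partial\mathcal{C}^0$ along the ray, rather than the soft convergence theory alone.
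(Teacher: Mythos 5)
Your reduction to Proposition \ref{precompactness1} is the right frame, and the easy parts (boundedness of the marked points from Theorem \ref{mainhyperboliccomponentisbounded}, the upper bound $d_E(f_{a_n}^m(a_n),\partial A_{a_n}^0)\leq L$ from the fact that the Julia sets cannot recede to infinity) match the paper. But the proof is not complete: you explicitly stop at the lower bound, saying the goal is ``to exhibit a fixed neighbourhood of $f_a^m(a)$ that lies in $A_{a_n}^0$ for all large $n$'' and that ``establishing this non-collapse is the heart of the argument.'' That is precisely the content of the lemma, so as written the proposal identifies the difficulty without resolving it. This is a genuine gap, not a stylistic difference.

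The missing idea in the paper is short but specific. Suppose for contradiction that $d_E(a_n,\partial A_{a_n}^0)\to 0$ along a subsequence. By Corollary \ref{lemmain} the fundamental segment $R_{A_{a_n}^0}[a_n,f_{a_n}^q(a_n)]$ has hyperbolic length in $A_{a_n}^0$ bounded by a constant independent of $n$, so by Lemma \ref{lem3} the \emph{entire} segment, not just its endpoint $f_{a_n}^q(a_n)$, collapses to the single point $a$ in the Euclidean metric. Now use the dynamics on the ray: this segment is the image $f_{a_n}\bigl(R_{A_{a_n}^0}[\infty,f_{a_n}^{q-1}(a_n)]\bigr)$ of the unbounded ray piece from $\infty$ to $f_{a_n}^{q-1}(a_n)$. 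Passing to the limit, $f_a$ would send a continuum joining $\infty$ to $f_a^{q-1}(a)$ onto the single point $\{a\}$; by the Identity Theorem this forces $f_a$ to be constant unless that continuum is degenerate, i.e.\ unless $f_a^{q-1}(a)=\infty$, and both alternatives are absurd for the entire function $f_a$. This is where the periodicity hypothesis $2^q\theta\equiv\theta$ enters (the asymptotic value sits on a \emph{periodic} ray, so the preimage segment genuinely reaches back to $\infty$), and it is exactly the ``specific dynamics of the singular orbit'' you correctly sensed was needed. Note also that in your framing one should not expect a neighbourhood of $f_a^m(a)$ to lie in $A_a^0$ in the limit: the limit domain $U_a^m$ is a subdomain of a \emph{different} Fatou component (a parabolic basin, by Lemma \ref{limitdomain}), which is consistent with your own observation that $d_{A_{a_n}^0}(0,f_{a_n}^m(a_n))\to\infty$.
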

\begin{proof} For simplicity, we will show only the case $m=0$.  We argue by contradiction. First observe that $a_n\nrightarrow\infty$, since $\mathcal{C}^0$ is bounded (see Theorem \ref{mainhyperboliccomponentisbounded}). It is also obvious that $d_E(a_n,\partial A_{a_n}^0)\nrightarrow \infty$, since the Julia set depends lowersemicontinuously on the parameter, and $\mathcal{J}(f_{a})\neq\emptyset$.
Suppose the sequence $\{(A_{a_n}^0,a_n)\}_{n}$ is not precompact. This implies $d_E(a_n,\partial A_{a_n}^0)\rightarrow 0$ as $a_n\rightarrow a\in\partial {\mathcal{C}^0}$ by Proposition \ref{precompactness1}, together with the discussion above. Moreover, by Corollary \ref{lemmain}, $d_{A_n^0}(a_n,f_{a_n}^q(a_n))<L$ for some $0<L<\infty$. Hence  $f_{a_n}^q(a_n)\rightarrow a$, by Lemma \ref{lem3}. On the other hand, $f_{a_n}(R_{A_{a_n}^0}[\infty, f_{a_n}^{q-1}(a_n)])=R_{A_{a_n}^0}[a_n,f_{a_n}^q(a_n)]$, thus $f_a(R_{A_{a}^0}[\infty,f_a^{q-1}(a)])=\{a\}$. This implies that $f_a$ is either constant by the Identity Theorem, or $f_a^{q-1}(a)=\infty$, which is a contradiction. Therefore, there exists $0<L<\infty$, such that $d_E(a_n,\partial A_{a_n}^0)>L$, which assures precompactness. 
\end{proof}
\noindent Lemma \ref{inftyaccumulationpoint} implies Carath\'eodory convergence of the pointed disks $\{(A_{a_{n}}^0,a_{n})\}_{n}$, after passing to a subsequence, and the limit domain $U_a^0$ is a subdomain of a Fatou domain by Proposition \ref{lem4}. 
\begin{lem}\label{limitdomain}  Let $\theta$ be such that $2^q\theta\equiv\theta\pmod{1}$ for some minimal $q\in\mathbb{N}$. Let  $a_n\in R_{\mathcal{C}^0}(\theta)$, and suppose that $a_n\rightarrow a\in\partial \mathcal{C}^0$. Suppose $(A^0_{a_n},a_n)\stackrel{Car} \longrightarrow (U_a^0, a)$. Then $U_a^0$ is a subdomain of the connected component of a  parabolic immediate basin, which is fixed by $f_a^q$.

\end{lem}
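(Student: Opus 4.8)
The plan is to carry the two consecutive points $a$ and $f_a^q(a)$ of the orbit of the asymptotic value into one and the same limit domain, conclude that the underlying Fatou component of $f_a$ is invariant under $f_a^q$, and then pin down its type by eliminating the remaining possibilities.

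First I would fix the limit domain. By Lemma \ref{inftyaccumulationpoint} (case $m=0$) the sequence $\{(A_{a_n}^0,a_n)\}_n$ is precompact, so the assumed convergence $(A_{a_n}^0,a_n)\stackrel{Car}\longrightarrow(U_a^0,a)$ holds along the chosen subsequence, and Proposition \ref{lem4} identifies $U_a^0$ as a simply connected subdomain of a single Fatou component $\Omega$ of $f_a$, with $a\in U_a^0\subset\Omega$. Since $a\in\partial\mathcal{C}^0$ the asymptotic value is not captured by the superattracting basin (concretely $g_a(1)=0$, hence $g_a(a)=0$ and $a\notin A_a^0$), so $\Omega$ is distinct from $A_a^0$.

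The key step is to produce $f_a^q(a)\in\Omega$. Because $a_n\in R_{\mathcal{C}^0}(\theta)$, the asymptotic value lies on the dynamic ray $R_{A_{a_n}^0}(2\theta)$, and $2^q\theta\equiv\theta$ forces $2^q(2\theta)\equiv2\theta$, so $f_{a_n}^q$ maps this ray to itself and $f_{a_n}^q(a_n)$ is again on it; the segment joining $a_n$ to $f_{a_n}^q(a_n)$ is the fundamental segment, whose hyperbolic length in $A_{a_n}^0$ is bounded by some $L<\infty$ independent of $n$, by Corollary \ref{lemmain} (with $l=0$, so that the endpoints are $a_n$ and $f_{a_n}^q(a_n)$). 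Since $f_{a_n}^q\to f_a^q$ locally uniformly and $a_n\to a$, we have $f_{a_n}^q(a_n)\to f_a^q(a)$. Feeding $u_n=a_n$ and $w_n=f_{a_n}^q(a_n)$ into Lemma \ref{thmsamecomponent} then gives $(A_{a_n}^0,f_{a_n}^q(a_n))\stackrel{Car}\longrightarrow(U_a^0,f_a^q(a))$, the same limit domain now marked at $f_a^q(a)$. Hence $a$ and $f_a^q(a)$ both lie in $U_a^0\subset\Omega$, and as $f_a^q$ sends $\Omega$ into the Fatou component containing $f_a^q(a)$, which is $\Omega$ itself, the component $\Omega$ is fixed by $f_a^q$.

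It remains to classify the invariant, simply connected component $\Omega$, which by the structure theory for our family (\cite{erlyu1992}) must be of attracting, parabolic, or Siegel type. It is not superattracting, since the sole critical point $0$ lies in $A_a^0\neq\Omega$; it is not attracting, for then both singular values $0$ and $a$ would be captured by attracting cycles, making $a$ a hyperbolic parameter interior to the hyperbolicity locus, which is impossible for $a\in\partial\mathcal{C}^0$ (a limit of, but not a member of, the open component $\mathcal{C}^0$, and not belonging to any other hyperbolic component since those are open and disjoint from $\mathcal{C}^0$); and it is not a Siegel disk, because then the forward orbit of $a\in\Omega$ would stay on a compact invariant circle (or be the fixed center), while the orbit of the only other singular value $0$ is $\{0\}$, so no singular orbit would accumulate on $\partial\Omega$, contradicting the classical fact that the boundary of an indifferent cycle lies in the closure of the postsingular set. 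Therefore $\Omega$ is a parabolic immediate basin fixed by $f_a^q$ and $U_a^0\subset\Omega$, as asserted. The main obstacle is precisely this classification: the exclusion of the attracting and Siegel cases is where the global hypothesis $a\in\partial\mathcal{C}^0$ and the finiteness of the singular set are genuinely used.
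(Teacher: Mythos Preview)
Your proof is correct and follows essentially the same route as the paper: invoke Proposition~\ref{lem4} to place $U_a^0$ inside a Fatou component $\Omega\neq A_a^0$, use the uniform hyperbolic bound from Corollary~\ref{lemmain} together with Lemma~\ref{thmsamecomponent} to force $f_a^q(a)\in U_a^0$ and hence $f_a^q(\Omega)=\Omega$, and then classify $\Omega$ via the openness of hyperbolicity. Your treatment is in fact more careful than the paper's at the last step, since you explicitly dispose of the Siegel possibility, which the paper's one-line justification (``there are no more free singular values, and hyperbolicity is an open condition'') leaves implicit.
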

\begin{proof}
Since $U_a^0$ is a subdomain of a Fatou component by Proposition \ref{lem4}, and  since $a\notin A_a^0$ and $a\in U_a^0$, then $U_a^0\cap A_a^0=\emptyset$. Moreover, by Lemma \ref{thmsamecomponent},  $f_a^q(a)\in U_a^0$, since $d_{A_{a_n}^0}(a_n,f_{a_n}^q(a_n))$ is bounded (see Corollary \ref{lemmain}). This means $U_a^0$ is contained in a $q$-periodic Fatou component. Moreover, there are no more free singular values, and hyperbolicity is an open condition. Therefore for $a\in\partial {\mathcal{C}^0}$, the limit domain $U_a^0$ is in a $q$-periodic parabolic component. By Lemma \ref{inftyaccumulationpoint}, $\{(A_{a_n}^0,f^m_{a_n}(a_n))\}_{n}$ for $m=1,...,(q-1)$ is also precompact, which means there exists a limit domain $U_a^m$, passing to a subsequence. Therefore, the set $\{U_a^i\}$, $i=0,1,...,(q-1)$ forms a cycle of subdomains of the immediate parabolic basin.
\end{proof}

\noindent Note that Lemma \ref{limitdomain} does not imply immediately that $q$ is the exact period of the parabolic basin (compare Proposition \ref{exactperiod}).
\subsubsection{Hausdorff convergence of dynamic rays}
Let $\theta$ be such that $2^q\theta\equiv\theta\pmod{1}$ for some minimal $q\in\mathbb{N}$. Observe that
for $a_n\in R_{\mathcal{C}^0}(\theta)$,  $a_n\in R_{A_{a_n}^0}(2\theta)$. Recall that we denote the maximal domain by $\Omega_{a_n}\subset A_{a_n}^0$, for which the restriction of the  B\"{o}ttcher map $\phi_{a_n}|_{\Omega_{a_n}}:\Omega_{a_n}\rightarrow\mathbb{D}_{r(a_n)}$ is biholomorphic. Recall also that for such $\theta$ and $a_n\rightarrow a\in\partial \mathcal{C}^0$, $(A^0_{a_n},a_n)\stackrel{Car} \longrightarrow (U_a^0, a)$, where $U_a^0$ is a subdomain of the connected component of a  parabolic immediate basin, by Lemma \ref{limitdomain}. Let us denote the parabolic basin for $f_a$ by $B_a$. Denote the ray segment in $R_{A_{a_n}^0}(2\theta)$, which connects $0$ and $a_n$ by $K_{a_n}$, i.e., $K_{a_n}:=\phi_{a_n}^{-1}\{re^{2\pi i 2\theta},0\leq r\leq (r(a_n))^2\}\subset R_{A_{a_n}^0}(2\theta)$.  We investigate the Hausdorff limit of  $K_{a_n}$ as $a_n\rightarrow a\in \partial \mathcal{C}^0$.  
Since the space of compact subsets of $\widehat{\mathbb{C}}$ equipped with the Hausdorff $\rho$-metric is compact, passing to a subsequence if necessary, $\{K_{a_n}\}_n$ converges to a compact connected set $K_a\subset\widehat{\mathbb{C}}$ as $a_n\rightarrow a$. Here $K_a$ connects $0$ and $a$. 

\begin{prop}\label{realanalyticarc}
Any connected component $\gamma$ of $K_a\cap \mathcal{F}(f_a)$ is a real analytic arc, and either:
\begin{itemize}
\item[i.] $\gamma=R_{A_a^0}(2\theta)$, 
\item[ii.] $\gamma\subset U_a^0$, and there exists a univalent map $\Xi_{\infty}:\widetilde{\mathcal{H}}\rightarrow \mathbb{C}$ from a right half plane $\widetilde{\mathcal{H}}=\{x+iy: x>t_{\infty}, y\in\mathbb{R}\}$, such that $\gamma=\Xi_{\infty}[t_{\infty},\infty)$, for some $0<t_{\infty}<1/2$ with $\Xi_{\infty}(1)=a$, and $\Xi_{\infty}$ conjugates $z\mapsto 2^q z$ to $f_a^q$, or
\item[iii.] $\gamma\subset B_a\backslash U_a^0$, and there exists a univalent map $\Xi_{\infty}:\mathbb{H}_{+}\rightarrow \mathbb{C}$, $\mathbb{H}_{+}=\{z;\;\;\Re z>0\}$, such that $\Xi_{\infty}$ conjugates $z\mapsto 2^q z$ to $f_a^q$, and $\gamma=\Xi_{\infty}(\mathbb{R}_{+})$.

\end{itemize}
\end{prop}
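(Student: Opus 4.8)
The plan is to realize every segment $K_{a_n}$ as the image of the positive real axis under a univalent map that conjugates the linear model $z\mapsto 2^q z$ to $f_{a_n}^q$, and then to pass to a limit in the sense of Carath\'eodory. \textbf{Step 1 (renormalized ray coordinates).} Since $2^q(2\theta)\equiv 2\theta\pmod 1$, the ray $R_{A_{a_n}^0}(2\theta)$ is $q$-periodic and $f_{a_n}^q$ maps it to itself, multiplying the potential by $2^q$. In the logarithmic B\"ottcher coordinate $\zeta=\log\phi_{a_n}$, the map $f_{a_n}^q$ becomes the affine map $\zeta\mapsto 2^q\zeta-2\pi i m$ with $m=(2^q-1)2\theta\in\mathbb{Z}$, whose repelling fixed point $\zeta^{*}=4\pi i\theta$ lies at potential $0$ (the prospective landing point) and which preserves the chosen lift $\{\Im\zeta=4\pi\theta\}$ of the ray. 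Translating $\zeta^{*}$ to the origin conjugates this affine map to $z\mapsto 2^q z$; rescaling by the real factor $p_n:=g_{a_n}(a_n)=2g_{a_n}(\infty)<0$ places the asymptotic value at $z=1$. Using that the only critical point of $f_{a_n}$ is $0$, which the ray avoids, the inverse branches of $f_{a_n}^q$ continuing the ray beyond $\Omega_{a_n}$ toward potential $0$ are well defined, so I obtain univalent maps $\Xi_n$ on the half-plane $\mathbb{H}_{+}=\{\Re z>0\}$ (possibly after shrinking in the imaginary direction) with $\Xi_n(2^q z)=f_{a_n}^q(\Xi_n(z))$, $\Xi_n(1)=a_n$, $\Xi_n(\mathbb{R}_{+})=R_{A_{a_n}^0}(2\theta)$, and $\Xi_n([1,\infty))=K_{a_n}$.

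\textbf{Step 2 (per-component limit).} Fix a connected component $\gamma$ of $K_a\cap\mathcal{F}(f_a)$, choose $v\in\gamma$, and pick marked points $b_n=\Xi_n(t_n)\in K_{a_n}$ with $b_n\to v$; since $w\mapsto 2^q w$ admits the real scaling symmetry, rescaling $z\mapsto z/t_n$ lets me assume the marking sits at $z=1$. The fundamental segments have uniformly bounded hyperbolic length (Corollary \ref{lemmain}), and the bounded sets $K_{a_n}$ converge to $K_a$ in Hausdorff distance, so the renormalized maps are uniformly bounded on a neighborhood of $\mathbb{R}_{+}$ and hence form a normal family there. Using $f_{a_n}\to f_a$ locally uniformly, I extract a subsequential locally uniform limit $\Xi_{\infty}$, which is univalent by Theorem \ref{carthm} and satisfies $\Xi_{\infty}(2^q z)=f_a^q(\Xi_{\infty}(z))$. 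Consequently $\Xi_{\infty}(\mathbb{R}_{+})$ (or a subray of it) is a real analytic arc containing $\gamma$ and invariant under $f_a^q$, which already gives the real-analyticity assertion.

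\textbf{Step 3 (classification by Fatou component).} By Proposition \ref{lem4} the limit domain is a subdomain of a Fatou component of $f_a$, and I identify it through the marked point. If the relevant marking converges to $0$, Lemma \ref{abovelemma} identifies the domain with $A_a^0$ and forces $\gamma=R_{A_a^0}(2\theta)$, which is case (i). If the marking is $a$ itself, Lemma \ref{limitdomain} places the arc in $U_a^0\subset B_a$; here the shift $t_{\infty}$ arises as the parameter at which the arc exits $U_a^0$, i.e.\ $\Xi_{\infty}(t_{\infty})\in\partial U_a^0$, giving case (ii) with $\Xi_{\infty}$ defined on the shifted half-plane $\{\Re z>t_{\infty}\}$ and $\Xi_{\infty}(1)=a$. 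Lemma \ref{thmsamecomponent} (bounded hyperbolic distance implies the same limit domain) and Lemma \ref{empty intersection} (unbounded hyperbolic distance implies disjoint limits) govern when successive fundamental segments remain in $U_a^0$; the components for which they do not lie in $B_a\setminus U_a^0$ and are parametrized by the full half-plane $\mathbb{H}_{+}$, which is case (iii). That no other Fatou components occur follows because $K_{a_n}\subset A_{a_n}^0$ and, by Proposition \ref{lem4}, every Fatou piece of $K_a$ is a Carath\'eodory limit of dynamically marked copies of $A_{a_n}^0$, whose only possible limits are $A_a^0$ and components of the parabolic basin $B_a$.

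\textbf{Main obstacle.} The delicate point is controlling the limit domain across the (a priori parabolic) limit dynamics: verifying that the inverse branches continuing $\Xi_n$ past $\Omega_{a_n}$ stay univalent all the way up to potential $0$ (so that the domain is genuinely $\mathbb{H}_{+}$), that the marked fundamental segment in the limit neither collapses to a point nor escapes to the boundary (which is precisely what the uniform bound of Corollary \ref{lemmain} rules out), and the exact determination of $t_{\infty}$ separating case (ii) from case (iii). The endpoint behaviour near the boundary Cantor bouquet, where the images may become unbounded, is exactly what forces the per-component renormalization in Step 2 instead of a single global limit.
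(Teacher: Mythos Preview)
Your approach is essentially the paper's: linearize the ray via a logarithmic B\"ottcher coordinate so that $f_{a_n}^q$ becomes $z\mapsto 2^q z$, pass to a normal-family limit $\Xi_\infty$, and classify the arc by which Fatou component receives the marked point. Where you diverge is in execution, and the paper's choice is cleaner exactly at the point you flag as the main obstacle. Instead of trying to extend $\Xi_n$ univalently to all of $\mathbb{H}_+$ via inverse branches beyond $\Omega_{a_n}$, the paper works only on the slit maximal domain $\widetilde{\Omega}_{a_n}=\Omega_{a_n}\setminus R_{A_{a_n}^0}(\tfrac12+2\theta)$ (so that $\log\phi_{a_n}$ is single-valued) and normalizes at the tracking point $z_n\to z(a)$ by
\[
\Psi_n(z)=\frac{\log\phi_{a_n}(z)-i\,\Im\log\phi_{a_n}(z_n)}{\Re\log\phi_{a_n}(z_n)},\qquad \Xi_n:=\Psi_n^{-1}.
\]
The image $\Psi_n(\widetilde{\Omega}_{a_n})$ is then automatically a strip or a right half-plane, and the trichotomy is read off from the behaviour of the single scalar $g_{a_n}(z_n)=\Re\log\phi_{a_n}(z_n)$ and the ratio $g_{a_n}(a_n)/g_{a_n}(z_n)$: $g_{a_n}(z_n)$ bounded away from $0$ gives case~(i); the ratio bounded gives case~(ii) with $t_\infty$ realised as its limit (up to the factor $\tfrac12$); both tending to $0$ gives case~(iii). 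No continuation past $\Omega_{a_n}$ is ever needed, so your ``main obstacle'' simply does not arise.

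Two further points in your write-up need care. Your normality argument (``uniformly bounded on a neighborhood of $\mathbb{R}_+$ because $K_{a_n}\to K_a$'') is not quite right: the domains $A_{a_n}^0$ are unbounded and $K_a$ is a Hausdorff limit in $\widehat{\mathbb{C}}$, so it may contain $\infty$. The paper instead uses that the $\Xi_n$ are univalent, so any locally uniform limit is either univalent or constant (Hurwitz), and rules out a constant limit $c$ by observing that the conjugacy would force $f_a^q(c)=c$ with $c$ in a Fatou component, hence in an attracting cycle, contradicting $a\in\partial\mathcal{C}^0$. Finally, your invocation of Lemma~\ref{empty intersection} for case~(iii) matches the paper exactly, but the relevant hypothesis is $d_{A_{a_n}^0}(a_n,z_n)\to\infty$, which in the paper's normalization is precisely $g_{a_n}(a_n)/g_{a_n}(z_n)\to 0$; stating it this way makes the separation of (ii) and (iii) transparent.
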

\begin{proof}
Set $\widetilde{\Omega}_{a_n}:=\Omega_{a_n}\backslash R_{A_{a_n}^0}(\frac{1}{2}+2\theta)$. Then the logarithm is well defined in $\widetilde{\mathbb{D}}_{r(a_n)}:=\phi_{a_n}(\widetilde{\Omega}_{a_n})$. Let $z(a)\in K_a$, and $z_n\in K_{a_n}$ be such that $z_n\rightarrow z(a)$, as $a_n\rightarrow a\in\partial \mathcal{C}^0$. We define univalent maps, using the  B\"{o}ttcher coordinates:
\begin{eqnarray*}
\Psi_n:\widetilde{\Omega}_{a_n}&\rightarrow & \mathbb{H}_{+},\notag\\
z&\mapsto& \frac{\log\phi_{a_n}(z)-\Im\log\phi_{a_n}(z_n)}{\Re\log\phi_{a_n}(z_n)}.
\end{eqnarray*}
 Observe the following two situations:
\begin{itemize}
\item[i.] If $\Re\log(\phi_{a_n}(z_n))\nrightarrow 0$, then the range $\widehat{\Omega}:=\Psi_n(\widetilde{\Omega}_{a_n})$ converges to a horizontal strip in some right half plane,
\item[ii.] If $\Re\log(\phi_{a_n}(z_n))\rightarrow 0$, then the range $\widehat{\Omega}$ converges to a right half plane.
\end{itemize}
In either case, the map $\Psi_n$ sends the ray $R_{A_{a_n}^0}(2\theta)\cap\widetilde{ \Omega}_{a_n}$ to the half line $(t_n,\infty)$, where $t_n:=\frac{1}{2}\frac{g_{a_n}(a_n)}{g_{a_n}(z_n)}$, and in particular, $\Psi_n(z_n)=1$. The map $\Psi_n$ conjugates $f_{a_n}^q$  to $z\mapsto 2^q z$. Let $\Xi_n:=\Psi_n^{-1}$ denote the univalent map defined in the range of $\Psi_n$ into $A_{a_n}^0$. Obviously $\{\Xi_n\}_n$ is a normal family, hence passing to a subsequence if necessary, it is uniformly convergent to a map $\Xi_{\infty}$, on compact sets of $\widehat{\Omega}$. The limit map $\Xi_{\infty}$ is either univalent or  constant. We claim that $\Xi_{\infty}$ is univalent. Indeed, if it is constant, say $\Xi_{\infty}\equiv c$, then by conjugacy, $c$ is a $q$ periodic point in a Fatou domain. This implies $c$ is in an attracting cycle, which contradicts $a\in\partial \mathcal{C}^0$. Hence $\Xi_{\infty}$ is univalent, and this assures convergence of $\{(\widetilde{\Omega}_{a_n},z_n)\}_n$ in the sense of Carath\'{e}odory as $a_n\rightarrow a\in\partial \mathcal{C}^0$, after passing to a subsequence. In this case,  $\Xi_{\infty}$ conjugates $z\mapsto 2^q z$ to $f_{a}^q$. This means, $\gamma$ is a real analytic arc, along which the direction of the dynamics $z\mapsto f_a^q(z)$ is preserved. \\
\\
Assume  $z_n\rightarrow z(a)\in K_a\cap \mathcal{F}(f_a)$. By hypothesis, $g_{a_n}(z_n)<g_{a_n}(a_n)$.
\begin{itemize}
\item[i.] If $g_{a_n}(z_n)<c<0$, then $g_a(z(a))<c$, and  $\Psi_n$ converges to a conformal map onto
\begin{equation*} 
H:=\{x+yi, x>0,-\frac{\pi}{g_a(z(a))} <y<\frac{\pi}{g_a(z(a))}\},
\end{equation*}
which is defined in whole $A_a^0\backslash R_{A_a^0}(\frac{1}{2}+2\theta)$.  By construction, $z(a)\in A_a^0$, $\Xi_{\infty}(0,\infty)=R_{A_a^0}(2\theta)$.
\item[ii.] If $1<\frac{g_{a_n}(a_n)}{g_{a_n}(z_n)}<c$, for some $1<c<\infty$, then $z(a)$ is in $U_a^0$, by Lemma \ref{thmsamecomponent} and Proposition \ref{comparablepotentials}, and  $\gamma=\Xi_{\infty}(t_{\infty},\infty)$ is a real analytic arc in $K_a\cap U_a$. Observe that $\Xi_{\infty}$ is defined in a right half plane 
\begin{equation*}
\widetilde{H}:=\{x+iy;\;\;x>t_{\infty},y\in\mathbb{R}\}.
\end{equation*}
By a suitable Mobius transformation, we can also have $\Xi_{\infty}(1)=a$, as in the statement of the Proposition. In this case, $0<t_{\infty}<1/2$. 
\item[iii.] If $\frac{g_{a_n}(a_n)}{g_{a_n}(z_n)}\rightarrow 0$, and $g_{a_n}(z_n)\rightarrow 0$, then $d_{A_{a_n}^0}(a_n,z_n)\rightarrow\infty$. This means $z(a)$ and $a$ are not contained in the same limiting domain by Lemma \ref{empty intersection}. Moreover $z(a)$ will be in a periodic component by Lemma \ref{thmsamecomponent}. Hence $z(a)$ is  in $B_a\backslash U_a^0$ because there is no other Fatou component apart from $A_a$, or $B_a$. Since $\frac{g_{a_n}(a_n)}{g_{a_n}(z_n)}\rightarrow 0$, we have $t_{\infty}=0$, and $\gamma=\Xi_{\infty}(\mathbb{R}_+)$.
\end{itemize}
\end{proof}

\subsubsection{Limit domains and landing points}
Let $W$ be a Fatou component such that $K_a\cap W\neq\emptyset$. Then $W$ is either in the superattracting basin $A_a$, or in the parabolic basin $B_a$. Indeed, if not, it requires at least one more free singular value. But $f_a$ has only two singular values, one of each is included in either $A_a$, or $B_a$. Therefore, there is no other basin to intersect with $K_a$.

\begin{prop}\label{exactperiod}
The limiting domain $U_a^0$ is contained in a $q$-periodic immediate parabolic basin, where the period $q$ is exact.
\end{prop}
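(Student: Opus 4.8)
The plan is to show that $q$ is the exact period of the parabolic cycle $\{U_a^i\}_{i=0}^{q-1}$ obtained in Lemma \ref{limitdomain}, i.e., that no proper divisor $q'$ of $q$ already satisfies $f_a^{q'}(U_a^0)\subset U_a^0$. Suppose for contradiction that the exact period of the immediate parabolic basin is some $q'<q$ with $q'\mid q$. The key idea is to transport this extra periodicity back to the approximating dynamics on the rays $R_{A_{a_n}^0}(2\theta)$, where the combinatorics is rigidly controlled by the doubling map $\theta\mapsto 2\theta$. Recall from Corollary \ref{lemmain} and Lemma \ref{thmsamecomponent} that the marked point $a$ and its image $f_a^q(a)$ lie in the same limit domain $U_a^0$, precisely because the hyperbolic length of the fundamental segment $R_{A_{a_n}^0}[a_n, f_{a_n}^q(a_n)]$ stays bounded. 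I would first observe that if $q'$ were the exact period, then by the same Carath\'eodory convergence machinery the point $f_a^{q'}(a)$ would already have to lie in $U_a^0$.

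Next I would exploit the half-plane uniformization $\Xi_\infty$ from Proposition \ref{realanalyticarc}(ii), which conjugates $z\mapsto 2^q z$ to $f_a^q$ along $\gamma$, together with the fact that under $\Xi_\infty$ the marked point $a$ corresponds to the coordinate $1$. If the true period were $q'$, then $f_a^{q'}$ would map $U_a^0$ to itself, and correspondingly on the Fatou-coordinate/half-plane model the smaller iterate would already implement the multiplication by $2^{q'}$; but on the ray level this forces $2^{q'}\theta\equiv\theta\pmod 1$ (reading off the argument of the dynamic ray through $a$, which is $2\theta$, and using that $f_{a_n}$ acts as angle-doubling on internal rays of $A_{a_n}^0$). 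This contradicts the minimality of $q$ in the hypothesis $2^q\theta\equiv\theta\pmod1$. Concretely, the step I would carry out is: translate the relation $f_a^{q'}(U_a^0)\subset U_a^0$ into the statement that the fundamental segment for period $q'$ is already closed up, hence that the approximating rays $R_{A_{a_n}^0}(2\theta)$ satisfy $f_{a_n}^{q'}\big(R_{A_{a_n}^0}(2\theta)\big)=R_{A_{a_n}^0}(2\theta)$, i.e. $2^{q'}\cdot 2\theta\equiv 2\theta\pmod1$, which simplifies to $2^{q'}\theta\equiv\theta\pmod1$ and contradicts minimality.

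I expect the main obstacle to be the passage from the \emph{limit} dynamics (where $f_a^{q'}$ fixes the parabolic component) back to a \emph{combinatorial} statement about $\theta$ that survives the limit. The subtlety is that Carath\'eodory convergence only gives subdomains of Fatou components (Proposition \ref{lem4}), so a priori one does not control the full cycle structure in the limit; one must argue that a spurious smaller period $q'$ of the \emph{basin} would have been visible already in the rigid, parameter-independent angle combinatorics of the internal rays $R_{A_{a_n}^0}(2^k\cdot 2\theta)$, whose arguments are determined purely by iterating the doubling map and are therefore constant along the whole sequence $\{a_n\}$. The cleanest way to secure this is to note that the $q$ distinct limit domains $U_a^0,\dots,U_a^{q-1}$ carry the $q$ distinct marked points $f_a^m(a)$, $0\le m\le q-1$, and that these marked points are genuinely distinct because the corresponding ray arguments $2^{m+1}\theta$ are distinct modulo $1$ for $0\le m\le q-1$ by minimality of $q$; a smaller period $q'$ of the underlying Fatou component would identify two of these domains, forcing a collision $2^{m+1}\theta\equiv 2^{m'+1}\theta\pmod1$ with $m\neq m'$, again contradicting minimality. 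Assembling these observations yields that the period of the immediate parabolic basin is exactly $q$.
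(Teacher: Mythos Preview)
Your proposal has a genuine gap at the crucial step. You correctly identify the difficulty yourself: passing from the \emph{limit} statement ``the parabolic basin has period $q'<q$'' back to a \emph{combinatorial} statement about $\theta$. Your proposed resolution, however, does not close this gap. You argue that the marked points $f_a^m(a)$, $0\le m\le q-1$, are distinct because the approximating points $f_{a_n}^m(a_n)$ lie on internal rays with distinct arguments $2^{m+1}\theta$. Granted. But distinctness of the marked points does \emph{not} imply that the limit domains $U_a^m$ lie in distinct Fatou components: two distinct points can perfectly well belong to the same parabolic basin component. So the inference ``period $q'$ would identify two of these domains, forcing $2^{m+1}\theta\equiv 2^{m'+1}\theta$'' is exactly the statement to be proved, not a consequence of what you have established. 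Likewise, the earlier version of your argument --- deducing $f_{a_n}^{q'}\big(R_{A_{a_n}^0}(2\theta)\big)=R_{A_{a_n}^0}(2\theta)$ from $f_a^{q'}(U_a^0)\subset U_a^0$ --- reverses the direction of Carath\'eodory convergence and is not justified by any of the lemmas available (Lemma~\ref{thmsamecomponent} goes the other way, and Lemma~\ref{empty intersection} only gives disjointness when hyperbolic distances diverge, which you have not checked for points on distinct rays in $A_{a_n}^0$).

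The paper's proof avoids this entirely by a direct geometric argument. It shows that if two of the $U_a^m$ lay in the same parabolic component, one could join suitable iterates $f_a^m(K_a)$ and $f_a^n(K_a)$ by an arc $\widehat\gamma$ inside that component; the resulting Jordan curve $\widehat\gamma\cup f_a^m(K_a)\cup f_a^n(K_a)$ meets $\mathcal{J}(f_a)$ in only finitely many periodic points, so its forward iterates stay bounded, yet it encloses a piece of the Julia set --- contradicting the Maximum Modulus Principle. A companion argument (no two internal rays in $A_a^0$ coland at a finite point) rules out the remaining degeneracy. This topological obstruction is what actually separates the $U_a^m$, and it is not recoverable from ray combinatorics alone.
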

\begin{proof}
Observe the following:
\begin{itemize}
\item[i.] No two rays in $A_a^0$ land at the same finite point. We will show it by contradiction. Suppose two dynamic rays, $R_{A_a^0}(\alpha_1)$, and $R_{A_a^0}(\alpha_2)$, $\alpha_1,\alpha_2\in\mathbb{R}\slash\mathbb{Z}$ coland at a finite point $z_0\in\partial A_a^0$. In that case, the immediate basin $A_a^0$ is illustrated by Figure \ref{maxmod1}. Set $\Gamma:=\overline{R_{A_a^0}(\alpha_1)\cup R_{A_a^0}(\alpha_2)}$. Note that $\Gamma\cap \mathcal{J}(f_a)=\{z_0\}$ and $\Gamma\backslash\{z_0\}\subset A_a^0$. A finite number of  forward iterates of $z_0$ stay bounded. Thus forward iterates of $\Gamma$ will be bounded. However the domain bounded by $\Gamma$ contains a subset of the Julia set. This contradicts the Maximum Modulus Principle. This observation particularly implies that no two periodic rays in $A_a^0$ land at the same point, since the landing point is periodic. 

\begin{figure}[htb!]
\begin{center}
\def\svgwidth{6 cm}
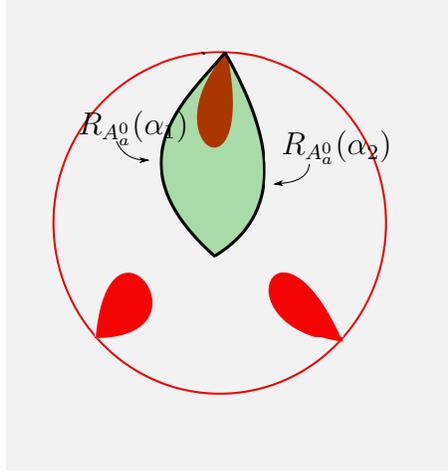
\caption{ \small{In the model figure of $A_a^0$, the shaded region bounded by $\Gamma=\overline{R_{A^0}(\alpha_1)\cup R_{A_a^0}(\alpha)}$ contains some Julia set, which contradicts the Maximum Modulus Principle.}}\label{maxmod1}
\end{center}
\end{figure}

\item[ii.]
Each limiting domain $\{U_a^i\}$, $0\leq i<q$  is in a distinct component of the immediate parabolic basin. Suppose two domains $U_a^m$, and $U_a^n$ are in the same component, say $B_a^n$. The situation is illustrated in Figure \ref{maxmod2}. Take a curve  $\widehat{\gamma} \subset B_a^n$, connecting $f_a^m(K_a)$ and $f_a^{n}(K_a)$, for $0<m,n<q$. Then the curve $\Gamma:=\widehat{\gamma}\cup f_a^{m}( K_a)\cup f_a^{n}(K_a)$ contains only two points in the Julia set, which are $q$-periodic. Hence the $qth$ iterate of the curve stays bounded. However the region bounded by $\Gamma$ contains a subset of the Julia set, which  contradicts  the Maximum Modulus Principle. 
 
\begin{figure}[htb!]
\begin{center}
\def\svgwidth{6 cm}
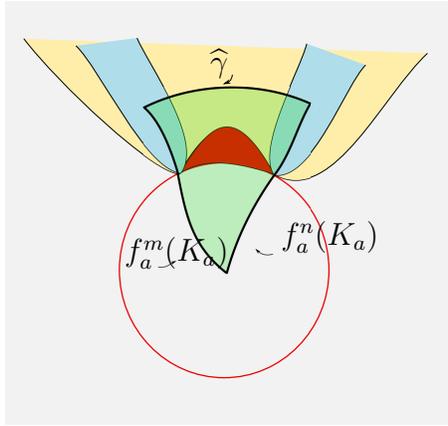
\caption{\small{In the model figure, the shaded region bounded by $\Gamma=\widehat{\gamma}\cup f_a^{m}(K_a)\cup f_a^{n}(K_a)$ contains some Julia set, which contradicts the Maximum Modulus Principle.}}\label{maxmod2}
\end{center}
\end{figure}

 \end{itemize}

\noindent It then follows from $i.$ and $ii.$ that the exact period of the immediate parabolic basin is $q$.
\end{proof}



\noindent Observe that for $a\in\partial \mathcal{C}^0$, $K_a\cap \mathcal{J}(f_a)\neq\emptyset$, since $K_a$ is a connected set which connects $0$ and $a$. Now, we will specify the character of the periodic points at $\mathcal{J}(f_a)\cap K_a$.

\begin{prop}\label{singlepointonboundary}
Every connected component of $K_a\cap \mathcal{J}(f_a)$ is a $q$-periodic point.
\end{prop}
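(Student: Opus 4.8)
The plan is to understand the structure of $K_a \cap \mathcal{J}(f_a)$ by combining the dynamical description of $K_a$ from Proposition \ref{realanalyticarc} with the rigidity forced by the Maximum Modulus Principle, already exploited in Proposition \ref{exactperiod}. Recall that $K_a$ is a connected compact set joining $0$ and $a$, and that $K_a \cap \mathcal{F}(f_a)$ decomposes into real analytic arcs of one of the three types (i)--(iii) in Proposition \ref{realanalyticarc}; each such arc is a conformal image of a half-line on which $f_a^q$ acts as $z \mapsto 2^q z$. The key observation is that the two endpoints of such a Fatou arc (the boundary of the arc relative to $K_a$) must lie in $\mathcal{J}(f_a)$, and by the $z \mapsto 2^q z$ conjugacy these endpoints are precisely the \emph{landing points} of the corresponding rays, hence are fixed by $f_a^q$, i.e. $q$-periodic.

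First I would fix a connected component $\gamma_J$ of $K_a \cap \mathcal{J}(f_a)$ and argue that it is a single point. The approach is to locate $\gamma_J$ as the landing point (or the shared limit set) of the Fatou arcs of $K_a$ adjacent to it. Since $K_a$ is the Hausdorff limit of the ray segments $K_{a_n}$, on which $f_a^q$ (in the limit) expands potentials by the factor $2^q$, the dynamics along $K_a$ runs monotonically outward away from the Julia set. Thus any point of $\gamma_J$ is approached from within $K_a$ by a Fatou arc of type (i), (ii), or (iii), and along each such arc $f_a^q$ acts as magnification by $2^q$ in the linearizing coordinate $\Xi_\infty$. A point $z_0 \in \gamma_J$ that is the limit of $\Xi_\infty(t)$ as $t \to t_\infty$ (the finite endpoint of the parametrizing half-line) must therefore satisfy $f_a^q(z_0) = z_0$: the expanding model $z \mapsto 2^q z$ fixes its endpoint, and the conjugacy passes this to the boundary. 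So each point of $\gamma_J$ reachable this way is $q$-periodic.

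Next I would upgrade "$\gamma_J$ contains a $q$-periodic point" to "$\gamma_J$ \emph{is} a single $q$-periodic point." For this I invoke the Maximum Modulus argument of Proposition \ref{exactperiod} in the same spirit as parts (i) and (ii) there. If $\gamma_J$ contained two distinct points $z_0 \ne z_0'$ of $\mathcal{J}(f_a)$, I would connect them through the adjacent Fatou arcs of $K_a$ together with a short arc inside the appropriate Fatou component to form a closed curve $\Gamma$ meeting $\mathcal{J}(f_a)$ in only finitely many ($q$-periodic) points. Then the forward orbit of $\Gamma$ under $f_a^q$ stays bounded, while $\Gamma$ encloses a region containing part of the Julia set, contradicting the Maximum Modulus Principle exactly as in Figures \ref{maxmod1} and \ref{maxmod2}. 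This forces $\gamma_J$ to reduce to one point, which by the previous paragraph is $q$-periodic.

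The main obstacle I anticipate is the first step: rigorously showing that \emph{every} point of a Julia component $\gamma_J$ is an endpoint of a Fatou arc of $K_a$ with the correct magnifying behaviour, rather than some point of $\mathcal{J}(f_a)$ that $K_a$ passes through transversally. One must rule out that $K_a$ spends positive "Julia time" or oscillates between $\mathcal{F}(f_a)$ and $\mathcal{J}(f_a)$ near $\gamma_J$ in a way that produces a nondegenerate Julia continuum. I would control this using the uniform bound on the hyperbolic length of fundamental segments (Corollary \ref{lemmain}) together with Lemma \ref{lem3}: since consecutive iterates $f_a^{mq}$ of points on $K_{a_n}$ remain a bounded hyperbolic distance apart in $A_{a_n}^0$ and the potentials scale by $2^q$, in the limit the Julia part of $K_a$ cannot spread out, and any Julia component must be the common accumulation set of the expanding Fatou arcs, hence a single periodic point. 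Establishing this degeneration cleanly, and handling the three arc types (i)--(iii) uniformly, is where the real work lies.
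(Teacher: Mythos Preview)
Your route is far more elaborate than the paper's, and the detour introduces gaps you do not need to face. The paper's proof is a three-line application of exactly the tool you mention only at the very end: for any $z(a)\in K_a\cap\mathcal{J}(f_a)$, choose $z_n\in K_{a_n}$ with $z_n\to z(a)$ (Hausdorff convergence); by Proposition~\ref{comparablepotentials}/Corollary~\ref{lemmain} one has $d_{A_{a_n}^0}(z_n,f_{a_n}^q(z_n))$ uniformly bounded; since $z(a)\in\mathcal{J}(f_a)$, the points $z_n$ approach $\partial A_{a_n}^0$, so Lemma~\ref{lem3} forces $f_{a_n}^q(z_n)\to z(a)$ as well, whence $f_a^q(z(a))=z(a)$. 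Thus \emph{every} point of $K_a\cap\mathcal{J}(f_a)$ is $q$-periodic, and since $q$-periodic points are isolated, each connected component is a singleton. No structure theorem for Fatou arcs, no landing argument, and no Maximum Modulus step is required.

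By contrast, your plan relies on two steps that are not secured. First, showing that the finite endpoint of a type-(ii) or type-(iii) arc is fixed by $f_a^q$ via the conjugacy $\Xi_\infty$ presupposes that $\Xi_\infty$ extends continuously to $t_\infty$ (i.e.\ that the arc lands), which you have not established. Second, your Maximum Modulus argument for reducing $\gamma_J$ to a point is not well posed: if $z_0,z_0'\in\gamma_J\subset\mathcal{J}(f_a)$, the adjacent Fatou arcs leave $\gamma_J$ rather than connect $z_0$ to $z_0'$, so there is no evident way to build a bounded-orbit Jordan curve through both points that encloses Julia set, in the manner of Proposition~\ref{exactperiod}. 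The ``real work'' you flag in your final paragraph---degeneration via the hyperbolic length bound and Lemma~\ref{lem3}---is in fact the entire argument; once you run it, the rest of your scaffolding becomes unnecessary.
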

\begin{proof}
 For any point $z(a)\in K_a\cap \mathcal{J}(f_a)$, there exist some $z_n\in K_{a_n}\subset A_{a_n}^0$, such that $z_n\rightarrow z(a)$. Those points also satisfy $d_{A_{a_n}^0}(z_n,f_{a_n}^q(z_n))<q\log 2$, given by Corollary \ref{lemmain}. Thus, if $z_n\rightarrow z(a)$, then $f_{a_n}^{q}(z_n)\rightarrow z(a)$, which means 
 \begin{equation}\label{discreteset}
 f_{a}^q(z(a))=z(a)
 \end{equation}
by Lemma \ref{lem3}.
 \end{proof}
\begin{cor}\label{kissing}
Let $\{U_a^i\}$, $0\leq i< q$ be such that $(A^0_{a_n},f_{a_n}^i(a_n))\stackrel{Car} \longrightarrow (U_a^i, f_a^i(a))$ (see Lemma \ref{inftyaccumulationpoint}). Each $U_a^i$ connects with $A_a^0$ at a periodic point on their common boundaries.
\end{cor}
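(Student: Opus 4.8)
\emph{Proposed proof.} The plan is to reduce the statement to the single index $i=0$ and then exploit the continuum $K_a$ joining $0$ and $a$. Fix $0\le i<q$ and set $K_a^{(i)}:=f_a^i(K_a)$. Since $f_{a_n}^i(K_{a_n})$ is the ray segment of $R_{A_{a_n}^0}(2^{i+1}\theta)$ joining $0=f_{a_n}^i(0)$ to $f_{a_n}^i(a_n)$, and since $f_{a_n}\to f_a$ locally uniformly while $K_{a_n}\to K_a$ in the Hausdorff metric, the set $K_a^{(i)}$ is the Hausdorff limit of these segments; it is a continuum joining $0\in A_a^0$ to $f_a^i(a)\in U_a^i$. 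Each ray $R_{A_a^0}(2^{i+1}\theta)$ has exact period $q$, so the statements of Proposition \ref{realanalyticarc} and Proposition \ref{singlepointonboundary} apply verbatim with $K_a$ replaced by $K_a^{(i)}$. It therefore suffices to produce, for each $i$, a $q$-periodic point lying in $\partial A_a^0\cap\partial U_a^i$, and I will describe the argument for $K_a^{(i)}$ (the case $i=0$ being typical).

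By Lemma \ref{limitdomain} the components $A_a^0$ and $U_a^i$ are disjoint, so the continuum $K_a^{(i)}$ must meet $\mathcal J(f_a)$. I would first decompose $K_a^{(i)}$ according to Proposition \ref{realanalyticarc}: the component of $K_a^{(i)}\cap\mathcal F(f_a)$ accumulating at $0$ is the dynamic ray $R_{A_a^0}(2^{i+1}\theta)$ (case i), and the component containing $f_a^i(a)$ is an arc $\gamma\subset U_a^i$ (case ii). The relative frontiers $\overline{R_{A_a^0}(2^{i+1}\theta)}\setminus R_{A_a^0}(2^{i+1}\theta)$ and $\overline{\gamma}\setminus\gamma$ both lie in $K_a^{(i)}\cap\mathcal J(f_a)$. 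Now Proposition \ref{singlepointonboundary} says that every connected component of $K_a^{(i)}\cap\mathcal J(f_a)$ is a single $q$-periodic point; since the impression of a dynamic ray is connected, the ray lands at a single $q$-periodic point $z_0\in\partial A_a^0$, and likewise $\gamma$ lands at a single $q$-periodic point $z_1\in\partial U_a^i$.

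The heart of the matter, and the step I expect to be the main obstacle, is to prove $z_0=z_1$, i.e.\ that the ray $R_{A_a^0}(2^{i+1}\theta)$ and the arc $\gamma$ kiss at a common boundary point. The mechanism is the connectedness of $K_a^{(i)}$ together with the isolation of its Julia components: the point $0$ is an endpoint of $K_a^{(i)}$ whose only neighbourhood in $K_a^{(i)}$ is the ray, so the remainder of $K_a^{(i)}$ (which contains $f_a^i(a)$) can attach to the ray only at its landing point $z_0$, forcing $z_0\in\overline{U_a^i}$ and hence $z_0=z_1$. The delicate point is to exclude a \emph{detour}: a priori $K_a^{(i)}$ could pass from the ray into an arc of case iii lying in $B_a\setminus U_a^i$ before reaching $\gamma$, which would separate $z_0$ from $z_1$. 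To rule this out I would use that each approximating segment $f_{a_n}^i(K_{a_n})$ is a single ray, monotone in the potential $g_{a_n}$, so that $K_a^{(i)}$ inherits a natural order in which its Julia points occur; the uniform bound on the hyperbolic length of consecutive fundamental segments from Corollary \ref{lemmain}, combined with Lemma \ref{lem3}, localises the transition from $A_a^0$ to $U_a^i$ at a single Julia point, whence $z_0=z_1$. This common $q$-periodic point lies in $\partial A_a^0\cap\partial U_a^i$, which is exactly the asserted kissing point, completing the proof for every $0\le i<q$.
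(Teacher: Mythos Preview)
The paper gives no proof of this corollary; it is stated as an immediate consequence of Proposition~\ref{singlepointonboundary} together with the connectedness of $K_a$. The intended argument is presumably nothing more than: $K_a$ (and its images $f_a^i(K_a)$) is a continuum joining $0\in A_a^0$ to $f_a^i(a)\in U_a^i$, these domains are disjoint, so $K_a$ meets $\mathcal J(f_a)$, and every such meeting point is $q$-periodic. Adjacent Fatou arcs of $K_a$ therefore touch at a periodic point on their common boundary.

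Your argument is more careful, and you correctly isolate a point the paper glosses over. If one reads the corollary as asserting that $\partial A_a^0\cap\partial U_a^i$ \emph{itself} contains a periodic point, then one must exclude the possibility that $K_a$ passes from the ray $R_{A_a^0}(2^{i+1}\theta)$ through one or more type-(iii) arcs in $B_a\setminus U_a^i$ before reaching the type-(ii) arc $\gamma\subset U_a^i$; in that scenario $z_0$ and $z_1$ would be distinct periodic points. Your proposed fix, using the monotonicity of the potential along the approximating arcs $K_{a_n}$ together with the hyperbolic bound of Corollary~\ref{lemmain}, is the right instinct, but as written it is only a sketch: Hausdorff limits of arcs need not be arcs, so the ``natural order'' on $K_a$ has to be justified, and you would still need to argue that no case-(iii) arc can interpose. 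The paper does not address this at all, and since the corollary plays no role in the proof of the Main Theorem (the crucial downstream input is Proposition~\ref{multiplierone}), it is quite possible the author intended only the weaker reading that $U_a^i$ and $A_a^0$ are linked through $K_a$ via periodic boundary points, which is indeed immediate.
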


\begin{prop}\label{multiplierone}
Every point $z(a)\in K_a\cap \mathcal{J}(f_a)$ is a parabolic periodic point with $f_a'(z(a))=1$.
\end{prop}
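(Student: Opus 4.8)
The plan is to first identify each point $z(a)\in K_a\cap\mathcal{J}(f_a)$ as the parabolic periodic point of the basin $B_a$, and then to force its multiplier to be $1$ by a rotation-number argument built on the fact that a single internal ray of $A_a^0$ lands at it.

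First I would record the parabolicity. By Proposition \ref{singlepointonboundary} every connected component $z(a)$ of $K_a\cap\mathcal{J}(f_a)$ is a $q$-periodic point, and by Corollary \ref{kissing} it lies on the common boundary $\partial A_a^0\cap\partial U_a^i$ of the superattracting basin and a component of the immediate parabolic basin of $B_a$. Since $B_a$ is an immediate parabolic basin of exact period $q$ (Proposition \ref{exactperiod}), and a periodic point lying on the boundary of the immediate parabolic basin to which the petals are attached is precisely the parabolic cycle, $z(a)$ is a parabolic $q$-periodic point; in particular $\lambda:=(f_a^q)'(z(a))$ is a root of unity, say a primitive $r$-th root $e^{2\pi i p/r}$ with $r\geq 1$. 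Because all components of $K_a\cap\mathcal{J}(f_a)$ are periodic points lying on $\partial A_a^0\cap\partial B_a$, they together constitute the single parabolic cycle and hence share the multiplier $\lambda$; it therefore suffices to compute $\lambda$ at one convenient point of the cycle.

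Next I would exhibit the internal ray landing at $z(a)$. By Proposition \ref{realanalyticarc}(i) the ray $R_{A_a^0}(2\theta)$ occurs as a component of $K_a\cap\mathcal{F}(f_a)$, so its accumulation set is contained in $K_a\cap\mathcal{J}(f_a)$, which by Proposition \ref{singlepointonboundary} consists of finitely many $q$-periodic points; being connected and contained in a finite set, this accumulation set is a single point, i.e. $R_{A_a^0}(2\theta)$ lands at some $z(a)$. Since $2^q\theta\equiv\theta\pmod 1$, applying $f_a^q$ (which doubles ray arguments $q$ times inside the fixed component $A_a^0$) gives $f_a^q(R_{A_a^0}(2\theta))=R_{A_a^0}(2^{q+1}\theta)=R_{A_a^0}(2\theta)$, so $f_a^q$ fixes this ray together with its landing point $z(a)$. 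Moreover, by Proposition \ref{exactperiod}(i) no two internal rays of $A_a^0$ land at a common point, so $R_{A_a^0}(2\theta)$ is the \emph{unique} internal ray of $A_a^0$ landing at $z(a)$. Finally I would convert this invariant access into the value of the multiplier: a dynamic ray landing at a parabolic periodic point lands along a well-defined repelling direction, and $f_a^q$ permutes the finite set of repelling directions at $z(a)$ cyclically with combinatorial rotation number $p/r$ dictated by $\lambda=e^{2\pi i p/r}$ (Leau--Fatou flower theorem). As $f_a^q$ maps the unique landing ray to itself, it fixes its repelling direction, so the rotation number is trivial; hence $r=1$ and $\lambda=(f_a^q)'(z(a))=1$, which is exactly the assertion that $z(a)$ is a parabolic periodic point of multiplier $1$.

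The main obstacle is this last step. One must justify rigorously that (i) the rational ray $R_{A_a^0}(2\theta)$ approaches the parabolic point along a definite repelling direction, and (ii) the action of $f_a^q$ on the repelling directions (equivalently, on the repelling petals) is precisely rotation by the combinatorial rotation number, so that an $f_a^q$-invariant repelling access forces $\lambda=1$ rather than merely a primitive root of unity. Both points rest on the Fatou-coordinate description of the parabolic germ supplied by the conjugating map $\Xi_{\infty}$ of Proposition \ref{realanalyticarc}, which is what endows the landing arc with a definite repelling direction; the decisive input that rules out nontrivial rotation is the uniqueness of the landing ray given by Proposition \ref{exactperiod}(i).
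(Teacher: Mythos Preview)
Your argument has a genuine gap: it never rules out the possibility that $z(a)$ is \emph{repelling}. In step~1 you assert that ``a periodic point lying on the boundary of the immediate parabolic basin to which the petals are attached is precisely the parabolic cycle''. This is not true: the boundary of a parabolic immediate basin is contained in the Julia set and typically carries many repelling periodic points, including ones of the same period as the basin. Corollary~\ref{kissing} only tells you that $\partial U_a^i$ and $\partial A_a^0$ share a periodic point; it does not identify that point with the parabolic cycle. Thus at the end of step~1 all you actually know is that $z(a)$ is a $q$-periodic point in $\mathcal J(f_a)$, hence either repelling or parabolic, which is where you started.

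Your rotation-number/Leau--Fatou step is correct as far as it goes, but it only separates multiplier $1$ from the other roots of unity; it does nothing against a repelling multiplier. If $(f_a^q)'(z(a))=\lambda$ with $|\lambda|>1$, the linearizer sends the $f_a^q$-invariant ray $R_{A_a^0}(2\theta)$ to a $(\cdot\,\lambda)$-invariant arc through $0$, and a single such invariant access is perfectly compatible with any real $\lambda>1$ (combinatorial rotation number $0$). So the uniqueness of the landing ray from Proposition~\ref{exactperiod}(i) does not force parabolicity.

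This is exactly the step the paper spends its effort on, by a different route. The paper argues by contradiction: if $z(a)$ were repelling, then by Rouch\'e each $f_{a_n}$ has a nearby repelling $q$-periodic point $z(a_n)$; using that the hyperbolic length of fundamental segments on $R_{A_{a_n}^0}(2\theta)$ is uniformly bounded (Corollary~\ref{lemmain}) while their endpoints approach $\partial A_{a_n}^0$, all backward fundamental segments from a point near $z(a)$ back to $a_n$ are trapped in a shrinking disk about $z(a)$, forcing $a_n\to z(a)$ and hence $a=z(a)$, contradicting $a\in U_a^0$. Only after repelling is excluded does the Snail Lemma (applied to the local inverse branch of $f_a^q$ along the invariant arc) yield $(f_a^q)'(z(a))=1$. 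Your flower argument could substitute for that final Snail-Lemma step, but it cannot replace the perturbation argument that eliminates the repelling case; that is the missing idea in your proposal. A smaller issue is that your argument is written only for the landing point of $R_{A_a^0}(2\theta)$, whereas the proposition concerns every point of $K_a\cap\mathcal J(f_a)$; the paper's contradiction argument applies uniformly to any such point.
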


\begin{proof}
We prove it by contradiction. Suppose $z(a)\in K_a\cap \mathcal{J}(f_a)$ is not parabolic point, i.e., $z(a)=\mathcal{J}(f_a)\cap K_a$ is a repelling $k$-periodic point. For a sequence $\{a_n\}_n\subset\mathcal{C}^0$, such that $a_n\rightarrow a\in\partial \mathcal{C}^0$, there exists $r>0$, $N\in\mathbb{N}$ such that there exists a unique repelling $k$-periodic point for $f_{a_n}$ in  $\mathbb{D}(z(a),r)$. This follows from Rouch\'e's Theorem (see the proof of Proposition \ref{lem4} for details). \\
\\
On the other hand, since $z(a)\in K_a$, there exists $\{w_n\}_n$, $w_n\in K_{a_n}$, such that $w_n\rightarrow z(a)$ as $n\rightarrow\infty$. Now we consider the two sequences $\{w_n\}_n$ and $\{z(a_n)\}_n$, both converging to $z(a)$ as $n\rightarrow\infty$. Note that $w_n\in A_{a_n}^0$, whereas $z_n\notin A_{a_n}^0$. This means $w_n\rightarrow \partial A_{a_n}^0$ as $n\rightarrow\infty$, since $|w_n-z(a_n)|\rightarrow 0$. \\
\\
Let $m\in\mathbb{N}$ be such that $w_n$ is contained in $R_{A_{a_n}^0}[f_{a_n}^{km}(a_n),f_{a_n}^{k(m+1)}(a_n)]$.  Here $m$ is a finite number, which depends on finite $n$. Observe that 

\begin{equation}
d_E(f_{a_n}^{km}(a_n),f_{a_n}^{k(m+1)}(a_n))\rightarrow 0
\end{equation}
 as $n\rightarrow \infty$, since fundamental segments have bounded hyperbolic length in the hyperbolic metric defined in $A_{a_n}^0$ (compare Proposition \ref{comparablepotentials} and Corollary \ref{lemmain}), together with $w_n\rightarrow\partial A_{a_n}^0$ (see Lemma \ref{lem3}).  Note also that any fixed finite number of adjacent fundamental segments on $R_{A_{a_n}^0}(2\theta)$ near $R_{A_{a_n}^0}[f_{a_n}^{km}(a_n),f_{a_n}^{k(m+1)}(a_n)]$ also shrink as $n\rightarrow\infty$ (see again Lemma \ref{lem3}).\\
\\
For $w_n\in\mathbb{D}(z(a),r)$,  increasing $n$ if necessary, we can assume that 
\begin{equation}
R_{A_{a_n}^0}[f_{a_n}^{km}(a_n),f_{a_n}^{k(m+1)}(a_n)]\subset f_{a_n}^k(\mathbb{D}(z(a),r)),
\end{equation}
since  the Euclidian length of $R_{A_{a_n}^0}[f_{a_n}^{km}(a_n),f_{a_n}^{k(m+1)}(a_n)]$ is small for large $n$. \\
\\
The unique lift of $R_{A_{a_n}^0}[f_{a_n}^{km}(a_n),f_{a_n}^{k(m+1)}(a_n)]$ by $f^{k}_{a_n}$ on the same ray, i.e., the preimage with endpoint $f_{a_n}^{km}(a_n)$  is contained in $\mathbb{D}(z(a),r)$. By induction, we see that all iterated preimages of $R_{A_{a_n}^0}[f_{a_n}^{km}(a_n),f_{a_n}^{k(m+1)}(a_n)]$ by $f_{a_n}^k$ on the same ray are contained in $\mathbb{D}(z(a),r)$, so is the ray segment $R_{A_{a_n}^0}[f_{a_n}^{mk}(a_n) ,a_n]$.\\
\\
Observe that the Hausdorff limit of $R_{A_{a_n}^0}[f_{a_n}^{mk}(a_n),a_n]$, say $\gamma$, is a compact set with distinct endpoints $z(a)$ and $a$. So $\gamma$ has a certain Euclidian length. Note that $n\rightarrow\infty$, $m\rightarrow\infty$. This is because as $f_{a_n}^{mk}(a_n)\rightarrow \partial A_{a_n}^0$, the Euclidian length of the fundamental segments along the ray $R_{A_{a_n}^0}(2\theta)$ near the boundary of $A_{a_n}^0$ will get close to $0$. This means that the number of iterates required in order to reach from $a_n$ to $f_{a_n}^{mk}(a_n)$ tends to $\infty$. Since $z(a)$ is the only (repelling) $k$-periodic point in $f_{a_n}^k(\mathbb{D}(z(a),r))$, then $a_n\rightarrow z(a)$ as $n\rightarrow\infty$. This means $a=z(a)$, which is a contradiction to the fact that $a\in U_a^0$. So  $z(a)\in K_a\cap \mathcal{J}(f_a)$ is a parabolic periodic point, and by the Snail Lemma $(f_a^k)'(z(a))=1$. 


\end{proof}

\subsection{Periodic case-II (when $l=1$)}\label{preperiodiccase}
\begin{lem}\label{othercase} Let $\theta$ be such that $2^{1+q}\theta\equiv2\theta\pmod{1}$ for some minimal $q\in\mathbb{N}$, and let  $a_n\in R_{\mathcal{C}^0}(\theta)$. Suppose that $a_n\rightarrow a\in\partial \mathcal{C}^0$. Then $\{(A_{a_n}^0,a_n)\}_n$ is not precompact, in the sense of Carath\'eodory.
\end{lem}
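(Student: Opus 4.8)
The plan is to deduce the failure of precompactness from the behaviour of the \emph{fixed} finite preimage of the asymptotic value. First I would reduce the statement: by Theorem \ref{mainhyperboliccomponentisbounded} the sequence $\{a_n\}_n$ is bounded, and since the Julia set depends lowersemicontinuously on the parameter we have $d_E(a_n,\partial A_{a_n}^0)\nrightarrow\infty$ (exactly as in the proof of Lemma \ref{inftyaccumulationpoint}); hence by Proposition \ref{precompactness1} precompactness fails precisely when $\liminf_n d_E(a_n,\partial A_{a_n}^0)=0$, so it suffices to push $a_n$ to $\partial A_{a_n}^0$. The crucial object is the finite preimage $z=1$ of the asymptotic value, which is independent of $n$ and satisfies $f_{a_n}(1)=a_n$. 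From $2^{1+q}\theta\equiv 2\theta$ together with the minimality forcing $\theta$ to be non-periodic one gets $2^q\theta\equiv\theta+\tfrac12$, so that $\theta+\tfrac12$ is itself $q$-periodic; since $\phi_{a_n}(1)=-\phi_{a_n}(\infty)$, the point $1$ lies on the \emph{periodic} ray $R_{A_{a_n}^0}(\theta+\tfrac12)$, at the outer edge of the B\"ottcher domain, with $g_{a_n}(1)=g_{a_n}(\infty)=\log r(a_n)\to 0$ by Lemma \ref{abovelemma}.

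Next I would pin down the limiting position of $1$. Because $f_{a_n}(1)=a_n$ and the immediate basin is forward invariant, $f_a(1)=a\notin A_a^0$ excludes $1\in A_a^0$. Feeding $(A_{a_n}^0,0)\stackrel{Car}\longrightarrow(A_a^0,0)$ (Lemma \ref{abovelemma}) and the constant sequence $w_n=1$ into Lemma \ref{thmsamecomponent} shows that $d_{A_{a_n}^0}(0,1)$ cannot remain bounded: otherwise $(A_{a_n}^0,1)\stackrel{Car}\longrightarrow(A_a^0,1)$ would force $1\in A_a^0$. Thus $d_{A_{a_n}^0}(0,1)\to\infty$, i.e. $1$ recedes hyperbolically toward $\partial A_{a_n}^0$. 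I would then upgrade this to $1\in\partial A_a^0$, and hence to $d_E(1,\partial A_{a_n}^0)\to 0$: since $\theta+\tfrac12$ is a periodic argument, the ray $R_{A_{a_n}^0}(\theta+\tfrac12)$ carrying $1$ accumulates, in the limit, on a boundary point of $A_a^0$, and through Theorem \ref{carconalter} the Hausdorff limit of the complements $\widehat{\mathbb{C}}\setminus A_{a_n}^0$ then contains $1$, producing boundary points $\zeta_n\in\partial A_{a_n}^0$ with $|\zeta_n-1|\to 0$.

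Finally I would transfer this to $a_n$ via the \emph{local} dynamics at $1$. As $f_{a_n}'(1)=a_n e\to ae\neq 0$ and $f_{a_n}\to f_a$ locally uniformly, the $f_{a_n}$ are univalent on a fixed disc about $1$ with uniformly bounded nonvanishing derivative; because $\partial A_{a_n}^0=\mathcal{J}(f_{a_n})$ is forward invariant, the boundary points $\zeta_n$ near $1$ map to boundary points $f_{a_n}(\zeta_n)$ near $f_{a_n}(1)=a_n$, whence $d_E(a_n,\partial A_{a_n}^0)\le|f_{a_n}(\zeta_n)-a_n|\asymp|a_ne|\,|\zeta_n-1|\to 0$. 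This gives $\liminf_n d_E(a_n,\partial A_{a_n}^0)=0$, so precompactness fails by Proposition \ref{precompactness1}. The decisive and only delicate step is the middle one: promoting the hyperbolic/potential statement that $1$ leaves $A_{a_n}^0$ to the Euclidean assertion $1\in\mathcal{J}(f_a)$, that is, ruling out that the fixed point $1$ slips into another (parabolic) Fatou component of $f_a$ in the limit. It is precisely the periodicity of $\theta+\tfrac12$ and the landing of the corresponding ray, combined with the Hausdorff convergence of Theorem \ref{carconalter}, that force $1$ onto $\partial A_a^0$.
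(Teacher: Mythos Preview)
Your reduction to showing $\liminf_n d_E(a_n,\partial A_{a_n}^0)=0$ is correct, as is the combinatorial observation that $2^q\theta\equiv\theta+\tfrac12$, so that $\theta+\tfrac12$ is $q$-periodic and $1$ sits on the periodic ray $R_{A_{a_n}^0}(\theta+\tfrac12)$ with $g_{a_n}(1)\to 0$. The final transfer from $1$ to $a_n$ via local univalence of $f_{a_n}$ at $1$ is also sound. The gap is exactly the step you flag as delicate, and it is a genuine one: from $d_{A_{a_n}^0}(0,1)\to\infty$ you cannot deduce $d_E(1,\partial A_{a_n}^0)\to 0$, and the ray-landing justification you offer is circular. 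Suppose $\{(A_{a_n}^0,a_n)\}_n$ \emph{were} precompact. Then along a subsequence $(A_{a_n}^0,a_n)\stackrel{Car}\longrightarrow(U_a^0,a)$ with $U_a^0$ in a parabolic basin, and the very local argument you use at the end, run in reverse (univalence of $f_{a_n}$ near $1$, total invariance of $A_{a_n}^0$), shows that $d_E(1,\partial A_{a_n}^0)$ is bounded \emph{below}. Hence $(A_{a_n}^0,1)\stackrel{Car}\longrightarrow(V,1)$ for some Fatou subdomain $V$ with $f_a(V)\subset U_a^0$; Lemma~\ref{empty intersection} gives $V\cap A_a^0=\emptyset$, so $1\in\mathcal F(f_a)\setminus A_a^0$, precisely the scenario you must exclude. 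In that scenario the Hausdorff limit of $R_{A_{a_n}^0}(\theta+\tfrac12)$ contains a real-analytic arc through $1$ lying in $V$ (by the same mechanism as Proposition~\ref{realanalyticarc}), so Theorem~\ref{carconalter} gives no reason for $1$ to lie in $\lim(\widehat{\mathbb C}\setminus A_{a_n}^0)$. Landing of the limit ray $R_{A_a^0}(\theta+\tfrac12)$ is not established at this point, and even granting it you would still need continuity of the landing point in $a$, which you have not argued.

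The paper obtains the contradiction from a global topological obstruction rather than a local one, and it uses \emph{both} preimages of $a$. Assuming precompactness, the finite preimage $1$ and the asymptotic preimage $\infty$ of $a$ lie in the same preimage component $V$ of the parabolic basin; the two lifts $\widehat K_a,\widetilde K_a$ of $K_a$ through $1$ and through $\infty$ can then be joined by an arc in $V$, producing a curve that meets $\mathcal J(f_a)$ only in two periodic points yet bounds a region containing Julia set, contradicting the Maximum Modulus Principle (Figure~\ref{maxmod3}). Your argument invokes only the preimage $1$; it is the interaction with the asymptotic tract at $\infty$ that actually forces the contradiction.
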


\begin{proof} We prove by contradiction. Suppose $\{(A_{a_n}^0, f_{a_n}^{l}(a_n))\}_n$ is precompact in the sense of Carath\'eodory. Take an arbitrary subsequence  $\{(A_{a_{n_k}}^0, f_{a_{n_k}}^{l}(a_{n_k}))\}_k$, and suppose $(A_{a_{n_{k_m}}}^0,f^{l}_{a_{n_{k_m}}}(a_{n_{k_m}}))\stackrel{Car} \longrightarrow (U_a^0,f_a^{l}(a))$. Here $U_a^0$ is periodic with period $q$  (see Lemma \ref{limitdomain}, Proposition \ref{exactperiod}). This cycle of components forms a cycle of subdomains of an immediate basin, disjoint from $A_a^0$. Also  the essential singularity and $z=1$ are included in the same component of the immediate basin. Let $\widehat{K}_a$ and $\widetilde{K}_a$ denote the two preimages of $K_a$ under $f_a$. Take a curve $\widetilde{\gamma}\subset U_a^0$ which connects $\widehat{K}_a$ and $\widetilde{K}_a$ at some finite points. Then the region bounded by $\widehat{\gamma}$, $\widehat{K}_a$, and $\widetilde{K}_a$ contains a subset of the Julia set, whereas, its boundary contains only two points in the Julia set, which are periodic (see Figure \ref{maxmod3} for $m=0$). This contradicts the Maximum Modulus Principle. Hence we conclude that $\{(A_{a_n}^0,a_n)\}_n$ is not precompact in the sense of Carath\'eodory.  Thus $\{(A_{a_n}^0,f_{a_n}^{l}(a_n))\}_n$ is not precompact in the sense of Carath\'eodory.

\end{proof}

\begin{figure}[htb!]
\begin{center}
\def\svgwidth{6 cm}
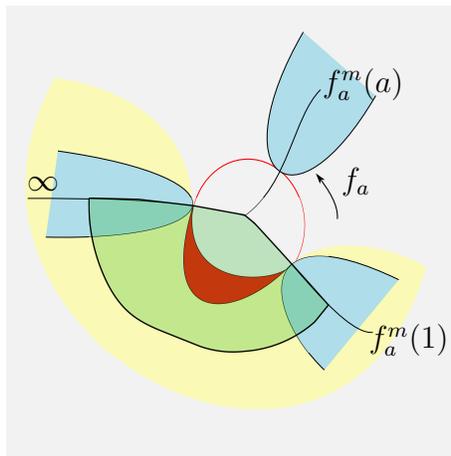
\caption{\small{The shaded  region in the figure contains some Julia set.} }\label{maxmod3}
\end{center}
\end{figure} 

\subsection{Preperiodic case (when $l>1$)}
\begin{lem}\label{othercase2}
Let $\theta$ be such that $2^{l+q}\theta\equiv2^{l}\theta\pmod{1}$ for some minimal integers $l>1$ and $q>1$, and let  $a_n\in R_{\mathcal{C}^0}(\theta)$. Suppose that $a_n\rightarrow a\in\partial \mathcal{C}^0$.  The sequence $\{(A_{a_n}^0, f_{a_n}^{l}(a_n))\}_n$ is not precompact in the sense of Carath\'eodory.
\end{lem}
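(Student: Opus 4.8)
The plan is to reproduce the contradiction scheme of the proof of Lemma~\ref{othercase}, with the periodic combinatorics there (the case $m=0$) replaced by the strictly preperiodic combinatorics here (the case $m=l-1$), so that Figure~\ref{maxmod3} applies for this larger value of $m$. First I would assume, for contradiction, that $\{(A_{a_n}^0, f_{a_n}^{l}(a_n))\}_n$ is precompact, and pass to a subsequence along which $(A_{a_n}^0, f_{a_n}^{l}(a_n))\stackrel{Car}\longrightarrow(U_a^0, f_a^{l}(a))$. By Proposition~\ref{lem4} the limit $U_a^0$ is a subdomain of a Fatou component, and by the argument of Lemma~\ref{limitdomain} together with Proposition~\ref{exactperiod} it lies in a component of a $q$-periodic immediate parabolic basin $B_a$ disjoint from $A_a^0$; marking at the intermediate iterates as in Lemma~\ref{inftyaccumulationpoint} yields the entire cycle $\{U_a^i\}_{0\le i<q}$ of parabolic components, each attached to $A_a^0$ along a periodic boundary point by Corollary~\ref{kissing}.

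Next I would extract the combinatorial input forced by $l>1$. The relation $2^{l+q}\theta\equiv 2^{l}\theta\pmod1$ makes $f_a^{l-1}(a)=f_a^{l-1+q}(a)$ a $q$-periodic point on $\partial A_a^0$, while the point $f_a^{l-1}(1)=f_a^{l-2}(a)$, coming from the finite preimage $z=1$ of the asymptotic value, is strictly preperiodic and is sent to $f_a^{l-1}(a)$ by $f_a$. Writing $K_a$ for the Hausdorff limit (after a further subsequence) of the ray segments $K_{a_n}\subset R_{A_{a_n}^0}(2\theta)$ joining $0$ to $a_n$, and $\widetilde K_a$ for the preimage of $K_a$ under $f_a$ ending at $z=1$, I would form the forward images $f_a^{l-1}(K_a)$ and $f_a^{l-1}(\widetilde K_a)$, terminating at the periodic point $f_a^{l-1}(a)$ and at the preperiodic point $f_a^{l-2}(a)=f_a^{l-1}(1)$ respectively, and combine them with the unbounded branch of $A_a^0$ accumulating on the essential singularity $\infty$. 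Under the precompactness assumption these three corners $\infty$, $f_a^{l-1}(1)$, $f_a^{l-1}(a)$ bound the configuration of Figure~\ref{maxmod3} for $m=l-1$; since the asymptotic value is the only free singular value and has been captured by $B_a$, the relevant arcs attach to a single parabolic component, and joining their free ends by a curve $\widehat\gamma$ inside that component closes up a curve $\Gamma$.

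The contradiction is then the Maximum Modulus argument of Lemma~\ref{othercase}: the bounded complementary face of $\Gamma$ contains a subset of $\mathcal{J}(f_a)$, while $\partial\Gamma$ meets $\mathcal{J}(f_a)$ only in the finitely many periodic and preperiodic points listed above, every other point of $\Gamma$ lying in $A_a^0\cup B_a\subset\mathcal{F}(f_a)$. Since those finitely many points are (pre)periodic, a suitable finite forward iterate of $f_a$ maps $\Gamma$, and hence its bounded face, into a compact set, so the enclosed piece of the Julia set stays bounded under iteration, contradicting the Maximum Modulus Principle. This rules out the precompactness of $\{(A_{a_n}^0, f_{a_n}^{l}(a_n))\}_n$, which is the assertion of the lemma. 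I expect the main difficulty to be the bookkeeping of the second paragraph: verifying, for a preperiod $l-1>0$ rather than $0$, that the iterated preimage arcs of $K_a$ really terminate where claimed and attach to one and the same parabolic component, and that the region they bound genuinely encloses a piece of the Julia set. This is exactly the step where the two-singular-value hypothesis must be invoked to forbid any further Fatou component from opening up, and where the strictly preperiodic ($l>1$) picture departs from the periodic one of Lemma~\ref{othercase}.
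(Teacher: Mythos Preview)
Your overall scheme matches the paper's: assume precompactness, extract a $q$-periodic parabolic cycle, and derive a Maximum Modulus contradiction from the configuration of Figure~\ref{maxmod3} with $m>0$. The paper's own proof is extremely terse here and says little more than this.

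However, your second paragraph contains a genuine error. You assert that the combinatorial relation $2^{l+q}\theta\equiv 2^{l}\theta$ ``makes $f_a^{l-1}(a)=f_a^{l-1+q}(a)$ a $q$-periodic point on $\partial A_a^0$''. This conflates an equality of ray \emph{angles} with an equality of \emph{points}, and in fact asserts precisely the conclusion one draws when the sequence is \emph{not} precompact (compare the Corollary following Lemma~\ref{othercase2}). Under the precompactness hypothesis you are working with, $f_a^{l}(a)$ lies in the parabolic Fatou component $U_a^0$, so its preimages $f_a^{l-1}(a)$ and $f_a^{l-2}(a)$ lie in Fatou components of the parabolic basin as well, not on $\partial A_a^0$; and $f_a^{l-1}(a)\neq f_a^{l-1+q}(a)$ since both are interior points drifting toward the parabolic cycle. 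Consequently your description of the arcs $f_a^{l-1}(K_a)$ and $f_a^{l-1}(\widetilde K_a)$ as ``terminating at a periodic point'' and ``at a preperiodic point'' is wrong, and for $q>1$ their free ends do not even lie in the same Fatou component, so the connecting curve $\widehat\gamma$ cannot be chosen as you describe.

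The repair is the one the paper gestures at: under precompactness the asymptotic value $a$ must be captured by the immediate parabolic basin, so one of the periodic components contains an asymptotic tract to $\infty$; together with the finite preimage $1$ (also captured), this is what produces the three corners $\infty,\ f_a^{m}(1),\ f_a^{m}(a)$ of Figure~\ref{maxmod3}. The Julia points on the bounding curve $\Gamma$ are then the $q$-periodic touching points supplied by Proposition~\ref{singlepointonboundary} and Corollary~\ref{kissing}, not the orbit points of $a$ themselves. You correctly flagged this bookkeeping as the crux; the fix is to keep the orbit of $a$ in the Fatou set and let the periodic ray landing points carry the Julia intersections.
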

\begin{proof}
We use the same approach as in Lemma \ref{othercase}. Suppose $\{(A_{a_n}^0, f_{a_n}^{l}(a_n))\}_n$ is precompact in the sense of Carath\'eodory. For an arbitrary $n_k$, consider the convergent subsubsequence $(A_{a_{n_{k_m}}}^0,f^{l}_{a_{n_{k_m}}}(a_{n_{k_m}}))\stackrel{Car} \longrightarrow (U_a^0,f_a^{l}(a))$. Here $U_a^0$ is periodic with period $q$. This cycle of components forms a cycle of subdomains of an immediate basin, disjoint from $A_a^0$. Similar to Lemma \ref{othercase}, the existence of the essential singularity in the periodic cycle of components contradicts the Maximum Modulus Principle (see Figure \ref{maxmod3} for $m>0$).  So $\{(A_{a_n}^0,f_{a_n}^{l}(a_n))\}_n$ is not precompact in the sense of Carath\'eodory.
\end{proof}

\begin{cor}
As a consequence of Lemma \ref{othercase} and Lemma \ref{othercase2}, 
\begin{equation*}
d_E(f_{a_n}^l(a_n),\partial A_{a_n}^0)\rightarrow 0,\;\;\mathrm{ and}\;\;d_{A_{a_n}^0}(f_{a_n}^l(a_n),f_{a_n}^{l+q}(a_n))<L,
\end{equation*}
 for $l\geq 0$ (as given by Corollary \ref{lemmain}) implies $d_E(f_{a_n}^l(a_n),f_{a_n}^{l+q}(a_n))\rightarrow 0$, and thus $f_a^{l}(a)=f_a^{l+q}(a)$.
\end{cor}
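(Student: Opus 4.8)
The plan is to combine the non-precompactness supplied by Lemmas~\ref{othercase} and \ref{othercase2}, the bounded hyperbolic distance from Corollary~\ref{lemmain}, and the continuity of the iterates in the parameter. Write $x_n:=f_{a_n}^l(a_n)$ and $y_n:=f_{a_n}^{l+q}(a_n)$. Since $a_n\in\mathcal{C}^0$, Proposition~\ref{totallyinvariantcomponent} gives that $A_{a_n}^0$ is totally invariant, so $\partial A_{a_n}^0=\mathcal{J}(f_{a_n})$ and, $a_n$ being in $A_{a_n}^0$, both $x_n$ and $y_n$ lie in $A_{a_n}^0$. As $a_n\to a$ and $f_{a_n}\to f_a$ locally uniformly, $x_n\to f_a^l(a)$ and $y_n\to f_a^{l+q}(a)$; in particular the marked points form a bounded sequence. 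The argument proceeds in three steps: first show $d_E(x_n,\partial A_{a_n}^0)\to 0$; then convert the bounded hyperbolic distance between $x_n$ and $y_n$ into a vanishing Euclidean distance; and finally pass to the limit.

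For the first step, Lemma~\ref{othercase} (when $l=1$) and Lemma~\ref{othercase2} (when $l>1$) assert that $\{(A_{a_n}^0,x_n)\}_n$ is not precompact. Since the marked points are bounded, Proposition~\ref{precompactness1} shows that no constant $L\in(1,\infty)$ can satisfy $\tfrac{1}{L}\le d_E(x_n,\partial A_{a_n}^0)\le L$ for all $n$; hence along a subsequence $d_E(x_n,\partial A_{a_n}^0)$ tends to $0$ or to $\infty$. The value $\infty$ is excluded exactly as in Lemma~\ref{inftyaccumulationpoint}: fixing $\zeta\in\mathcal{J}(f_a)\neq\emptyset$ and using lower semicontinuity of the Julia set, one finds $\zeta_n\in\mathcal{J}(f_{a_n})=\partial A_{a_n}^0$ with $\zeta_n\to\zeta$, whence $d_E(x_n,\partial A_{a_n}^0)\le d_E(x_n,\zeta_n)\to d_E(f_a^l(a),\zeta)<\infty$ stays bounded. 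Since every subsequence of $\{a_n\}_n$ again meets the hypotheses of Lemmas~\ref{othercase} and \ref{othercase2}, the same reasoning applied to an arbitrary subsequence rules out any subsequence bounded away from $0$, and therefore $d_E(x_n,\partial A_{a_n}^0)\to 0$ along the whole sequence.

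For the second step I would pass to Riemann maps. Let $\psi_n:\mathbb{D}\to A_{a_n}^0$ be univalent with $\psi_n(0)=x_n$ and $\psi_n'(0)>0$; this is legitimate because the Fatou components here are simply connected. The Koebe $1/4$ theorem gives $|\psi_n'(0)|\le 4\,d_E(x_n,\partial A_{a_n}^0)\to 0$, while Corollary~\ref{lemmain} yields $d_{A_{a_n}^0}(x_n,y_n)<L$, so that $\widehat{y}_n:=\psi_n^{-1}(y_n)$ satisfies $d_{\mathbb{D}}(0,\widehat{y}_n)<L$, i.e. $|\widehat{y}_n|\le\tanh(L/2)=:r_L<1$. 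The Koebe distortion theorem then gives
\begin{equation*}
|y_n-x_n|=|\psi_n(\widehat{y}_n)-\psi_n(0)|\le |\psi_n'(0)|\,\frac{r_L}{(1-r_L)^2}\longrightarrow 0,
\end{equation*}
the key point being that the distortion factor depends only on $L$ and not on the varying domain $A_{a_n}^0$. Passing to the limit, $d_E(x_n,y_n)\to 0$ together with $x_n\to f_a^l(a)$ and $y_n\to f_a^{l+q}(a)$ yields $f_a^l(a)=f_a^{l+q}(a)$.

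The main obstacle is the uniformity required in the second step: as the domains $A_{a_n}^0$ vary with $n$, the qualitative Lemma~\ref{lem3}, stated for a single fixed hyperbolic domain, cannot be invoked directly, and one must instead rely on the quantitative Koebe estimates, whose constants depend only on the hyperbolic bound $L$. A secondary delicate point is the exclusion of $d_E(x_n,\partial A_{a_n}^0)\to\infty$, where the total invariance of $A_{a_n}^0$—forcing its boundary to be the full Julia set—together with lower semicontinuity of the Julia set is what keeps the distance to the boundary bounded.
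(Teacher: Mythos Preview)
Your argument is correct and is in fact more careful than what the paper provides: the paper states this corollary with no separate proof, the intended mechanism being the one already used inside Lemma~\ref{inftyaccumulationpoint}, namely an appeal to Lemma~\ref{lem3}. You are right to point out that Lemma~\ref{lem3} is formulated for a single fixed hyperbolic domain, whereas here the domains $A_{a_n}^0$ vary with $n$; replacing that appeal by the Koebe $\tfrac14$ and distortion estimates is exactly the standard way to make the step uniform, since the distortion factor depends only on the hyperbolic bound $L$ and simple connectivity of $A_{a_n}^0$ is available. In substance this is the same mechanism underlying Lemma~\ref{lem3} in the simply connected case, so the two approaches coincide in spirit; yours makes the uniformity explicit and also spells out, via Proposition~\ref{precompactness1} and lower semicontinuity of the Julia set applied to every subsequence, why non-precompactness forces $d_E(x_n,\partial A_{a_n}^0)\to 0$ along the full sequence, a step the paper leaves implicit.
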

\subsection{Conclusion of the proof of the Main Theorem}
Now we collect all the results and complete the proof of the Main Theorem:
\\
\\
Any accumulation point $a\in\partial\mathcal{C}^0$ of a parameter ray with a rational argument satisfies either
\begin{eqnarray*}
f_a^q(z(a))&=&z(a),\notag\\
(f_a^q)'(z(a))&=&1
\end{eqnarray*}
as given by Proposition \ref{multiplierone}, or
\begin{equation*}
f_a^{q+l}(a)=f_a^l(a)
\end{equation*}
as given by Lemma \ref{othercase} and Lemma \ref{othercase2}.\\ 
\\
In both cases, the solution set forms a discrete set. However, the accumulation set is connected as being an accumulation set of a connected set. Therefore, the accumulation set consists of only one point. This proves all internal parameter rays with rational argument in the main hyperbolic component ${\mathcal{C}^0}$ lands at $\partial\mathcal{C}^0$.



\bibliography{references}
\bibliographystyle{plain}
\end{document}